\documentclass{amsart}
\PassOptionsToPackage{prologue,dvipsnames}{xcolor}
\usepackage[utf8]{inputenc}
\usepackage{amsfonts}
\usepackage{hyperref}
\usepackage{dynkin-diagrams}
\usepackage{tikz-cd} 
\usepackage{verbatim}
\usepackage{cite}
\usepackage{varwidth}
\usepackage{float}
\usepackage{physics}
\usepackage{array,longtable}
\newcolumntype{C}{>{$}c<{$}} 
\newcolumntype{L}{>{$}l<{$}} 

\definecolor{yelloworange}{RGB}{248, 186, 0}

\usepackage{amsmath}
\usepackage{amsthm}
\usepackage{pdflscape}
\usepackage{pgfplots}
\pgfplotsset{compat=1.18}
\usepackage{mathrsfs}
\usepackage{tikz}
\usetikzlibrary{braids,calc,shapes.geometric,matrix, quotes, angles,intersections, chains, arrows.meta, shapes, positioning,
arrows,shapes,positioning, through, backgrounds}
\tikzset{suspend join/.code={\def\tikz@after@path{}}}

\newtheorem{thm}{Theorem}[section]
\newtheorem{cor}[thm]{Corollary}
\newtheorem{prop}[thm]{Proposition}
\newtheorem{lem}[thm]{Lemma}

\theoremstyle{definition}
\newtheorem{obs}[thm]{Observation}
\newtheorem{defn}[thm]{Definition}

\newtheorem{algm}[thm]{Algorithm}
\newtheorem{exmp}[thm]{Example}

\newtheorem{construction}[thm]{Construction}

\newtheorem{rmk}[thm]{Remark}

\theoremstyle{remark}

\makeatletter
\let\c@equation\c@thm
\makeatother
\numberwithin{equation}{section}

\newcommand{\Z}{\mathbb{Z}}
\newcommand{\F}{\mathbb{F}}
\newcommand{\Q}{\mathbb{Q}}
\newcommand{\PDS}{perfect difference set}
\newcommand{\llrrparen}[1]{
  \left(\mkern-2mu\left(#1\right)\mkern-2mu\right)}

\DeclareMathOperator\PGL{PGL}

\def\MyColorList{{"red","cyan","blue"}}
\newcommand{\midarrow}{\tikz \draw[->] (0,0) -- +(.1,0);}

\def\centerarc[#1](#2)(#3:#4:#5)
    { \draw[#1] ($(#2)+({#5*cos(#3)},{#5*sin(#3)})$) arc (#3:#4:#5); }
    

\title{Triangle Presentations Encoded by Perfect Difference Sets}

\author{Amy Herron}

\begin{document}

\begin{abstract}
When James Singer exhibited projective planes for all prime power orders in 1938, he realized these using the trace function of cubic extensions of a finite field and linked $\text{trace}=0$ to perfect difference sets.  In 1993, Cartwright, Mantero, Steger, and Zappa found that this trace function can be used to create a triangle presentation, which determines the structure of an $\tilde{A}_2$ building.  We demonstrate a new, intrinsic connection between the perfect different sets of Singer and the triangle presentations of Cartwright et al., and show that this connection improves the efficiency of algorithms that generate these triangle presentations. Moreover, we translate the panel-regular groups of Essert \cite{essert2013geometric} and Witzel \cite{witzel2017panel} using triangle presentation nomenclature.  This translation creates a uniform understanding of the panel-regular groups and vertex-regular groups via triangle presentations. 

\end{abstract}

\maketitle
\tableofcontents

\section*{Introduction}

In 1906, Veblen and Bussey proved that finite projective planes exist for all prime power orders \cite{veblen1906finite}.  Subsequently in 1938, Singer realized these projective planes using the trace function of cubic extensions of a finite field and connected $\text{trace}=0$ to  perfect difference sets.  Thus he proved one direction of the long-standing prime power conjecture:  an integer is the order of a projective plane if and only if it is a prime power.  

An $\tilde A_2$ building is a simplicial complex whose vertex links are all incidence graphs of some fixed projective plane.  In \cite{cartwright1993groups}, Cartwright, Mantero, Steger, and Zappa construct a group $\Gamma$ of type-rotating automorphisms of an $\tilde A_2$ building that acts simply transitively on the vertices of the building, and has generators in one-to-one correspondence with the points of the fixed projective plane.
The authors then found a construction for any prime power $q$ for a specific $\Gamma$, say $\Gamma_0$, that they call ``of Tits type." This $\Gamma_0$ embeds as an arithmetic subgroup of $\PGL\big(3,\F_q\llrrparen{t}\big)$. 

This construction makes use of Singer's construction of projective planes by identifying points of the projective plane with $\F_{q^3}^\times / \F_q^\times$.  These points can be considered as the first $q^2+q+1$ points of the cyclic group $\F_{q^3}^\times$ with primitive element $\zeta$ and are usually referred to by their power (e.g., $\zeta^i$ is referred to as point $i$). 
They then define a bijective function $\alpha: \textit{Points}\to\textit{Lines}$ such that point $0$ is sent to the line consisting of all the $(q+1)$ powers $i$ such that $tr_{\F_{q^3}^\times / \F_q^\times}(\zeta^i)=0$.  Subsequent points, say $x$, are sent to the lines 
$\{y\in \textit{Points}: tr_{\F_{q^3}^\times / \F_q^\times}(\zeta^y/\zeta^x)=0\}$.  Function $\alpha$ is used to determine the relations of $\Gamma_0$.

The Cayley graph of $\Gamma_0$ ends up being the 1-skeleton of the $\tilde{A}_2$ building.  The relations of $\Gamma_0$ come from triples of generators that make a triangle in the Cayley graph. The set of corresponding triples of the points in the projective plane is called a \textit{triangle presentation}.  

We introduce a way to construct a group isomorphic to $\Gamma_0$ using only the elements from a perfect difference set.  That is, any \PDS\ of order $q$ that is invariant by multiplication by $q$ encodes all the information needed to obtain the triangle presentation (see Theorem \ref{thm:main thm}).  Our method also significantly reduces the complexity of generating examples of triangle presentations compatible with bijective map $\alpha$.  (See Algorithm \ref{algorithm}.)  

Moreover, we show that multiple triangle presentations constructed in this manner are associated with a single given bijective function $\alpha$ (see Proposition \ref{prop:2^k}) and that all such triangle presentations are equivalent.  An important step in showing this is to prove that projective planes described by any two particular types of \PDS s can be transformed to one another via a collineation or correlation (see Theorem \ref{thm: PDS collineation or correlation}).

Both Essert and Witzel constructed groups of type-preserving automorphisms of an $\tilde A_2$ building that act simply transitively on the \textit{edges} of the building \cite{essert2013geometric} and \cite{witzel2017panel}.  In doing so, they also make use of Singer's construction of projective planes and connect this construction to \PDS s.  In \cite{witzel2017panel}, Witzel finds a subgroup of these groups that also acts simply transitively on each vertex type.  This subgroup turns out to be the intersection of our group $\Gamma$ with the ones created by \cite{essert2013geometric} and \cite{witzel2017panel}. We give a new description of this subgroup via triangle presentations and create a uniform understanding of our $\Gamma$ and the panel-regular groups in Section \ref{subsection:extension}.

Section \ref{sec:preliminaries} provides the necessary background.  
Section \ref{sec: main thm} proves the main theorem (Theorem \ref{thm:main thm}) that perfect difference sets encode triangle presentations for the aforementioned arithmetic subgroup of $\PGL\big(3,\F_q\llrrparen{t}\big)$. This section also looks at the equivalence of triangle presentations, creates an efficient algorithm to construct $\mathscr{T}$, and explains the extension of $\Gamma$ by automorphisms using this new construction.  Section \ref{subsection:extension} discusses the connection to the work of Essert and Witzel.  
Section \ref{sec:examples} gives several examples of the correspondence between perfect difference sets and triangle presentations.

\hfill\\

\section*{Acknowledgements}

We thank UNCG Summer School 2019: Computational Aspects of Buildings (DMS 1802448) for introducing us to the subject area and the Simons Foundation (965204, JM) for much travel support. Parts of this work were completed while the author was supported by the University of Giessen, particularly Stefan Witzel.  We further thank Stefan Witzel for his interest from the beginning of our ideas for this paper and various discussions that followed.  
Much thanks goes to Pierre-Emmanuel Caprace and Anne Thomas for providing invaluable conversations and edits.  Lastly, we cannot thank Johanna Mangahas enough for painstakingly working with us throughout the research and writing of this paper.

\section{Preliminaries} \label{sec:preliminaries}

The first three sections provide the necessary background to understand what triangle presentations are.  Because they are exclusive to $\tilde A_2$ buildings, whose vertex links are the incidence graph of a projective plane, Section \ref{subsection:projplane} begins with defining \textit{projective planes} and includes concepts relevant to the understanding of triangle presentations. Section \ref{subsection:buildings} defines \textit{buildings} and related concepts such as Coxeter complexes and links of vertices that are specific to $\tilde A_2$ buildings.  Section \ref{subsection:tripres} introduces \textit{triangle presentations}.  Lastly, Section \ref{subsection:PDS} defines \textit{\PDS s} and introduces relevant material needed for Theorem \ref{thm:main thm}.

\subsection{Projective Planes} \label{subsection:projplane} \hfill\\

This section covers basic definitions and facts about finite projective planes, most of which can be found in any standard reference about finite projective planes (see, for example \cite{PP_Stevenson}).  Henceforth, ``projective planes" will refer exclusively to finite projective planes. 

\begin{defn} 
    A \textit{projective plane} is a triple of sets of points, lines, and incidence relations that satisfy the following axioms:
    \begin{enumerate}
        \item Every two points lie on a unique line,
        \item every two lines intersect at a unique point, and
        \item there exist four points, no three of which are collinear.  
    \end{enumerate}
\end{defn}

\begin{defn}
    The \textit{order} $q$ of a projective plane is the number of points on a line minus 1.  
\end{defn}

Some well-known facts that can be derived from the definition of a projective plane are as follows:
\begin{enumerate}
    \item  The number of points on a line equals the number of lines through    a point.
    \item A projective plane has the same number of points as lines.
    \item Every line in a projective plane contains the same number of points.  
    \item A projective plane of order $q$ has $q^2+q+1$ points/lines.
\end{enumerate}

Veblen and Bussey \cite{veblen1906finite} constructed examples of projective planes associated with all finite fields $\F_q$, where $q$ is necessarily a prime power and is also the order of these projective planes. These are called \textit{Desarguesian} projective planes, and they are the kind corresponding to the $\tilde A_2$-buildings studied here.

The smallest projective plane has order 2 and was discovered by Gino Fano\cite{fano}.  It can be thought of as a unit cube with hyperplanes intersecting four corners, one of which is the origin.  Figure \ref{fig:fano} is an example picture of the Fano plane with points labeled as coordinates of the unit cube.

\begin{figure}[h!]
\begin{center}
	\begin{tikzpicture}[scale=0.6]
		\tikzstyle{point}=[circle, draw=black, fill=white, inner sep=0.1cm]
		\node (v7) at (0,0) [point] {111};
			\draw (0,0) circle (2cm);
		\node (v1) at (90:4cm) [point] {010};
		\node (v2) at (210:4cm) [point] {100};
		\node (v4) at (330:4cm) [point] {001};
		\node (v3) at (150:2cm) [point] {110};
		\node (v6) at (270:2cm) [point] {101};
		\node (v5) at (30:2cm) [point] {011};
			\draw (v1) -- (v3) -- (v2);
			\draw (v2) -- (v6) -- (v4);
			\draw (v4) -- (v5) -- (v1);
			\draw (v3) -- (v7) -- (v4);
			\draw (v5) -- (v7) -- (v2);
			\draw (v6) -- (v7) -- (v1);
	\end{tikzpicture}
\end{center}
    \caption{Fano Plane} 
    \label{fig:fano}
\end{figure}

In section \ref{sec: equiv tri pres}, we will be looking at ways to create new projective planes from a given projective plane.  There are two ways to do this---via collineation and correlation as defined below.  

\begin{defn}
    A \textit{collineation} of a projective plane is a bijection that sends points to points and lines to lines such that collinear points in the domain are also collinear in the image.    
\end{defn}

\begin{defn}

    A \textit{correlation} of a projective plane is a bijection that maps points to lines, and lines to points while reversing incidence. Say $\beta$ is such a bijection.  Then if point $p$ is on line $L$, $\beta(L)$ is on line $\beta(p)$.
    
\end{defn}

\hfill\\
\\

\subsection{$\tilde{A}_2$ Buildings} \label{subsection:buildings} \hfill\\
\par This section covers basic definitions of spherical and affine buildings as well as provides constructions specific to $A_2$ and $\tilde A_2$ buildings.  We motivate this section with the construction of an $A_2$ building via the incidence graph of a projective plane.

\par Figure \ref{fig:Heawood} shows the incidence graph of the projective plane over field $\F_2$ from Figure \ref{fig:fano}.  The white vertices represent points of the projective plane and gray vertices represent lines of the projective plane.  Edges are determined by incidence (i.e., if a point is on a line or a line contains a point).

\begin{figure}[h!]
    \centering
    \includegraphics[width=0.5\linewidth]{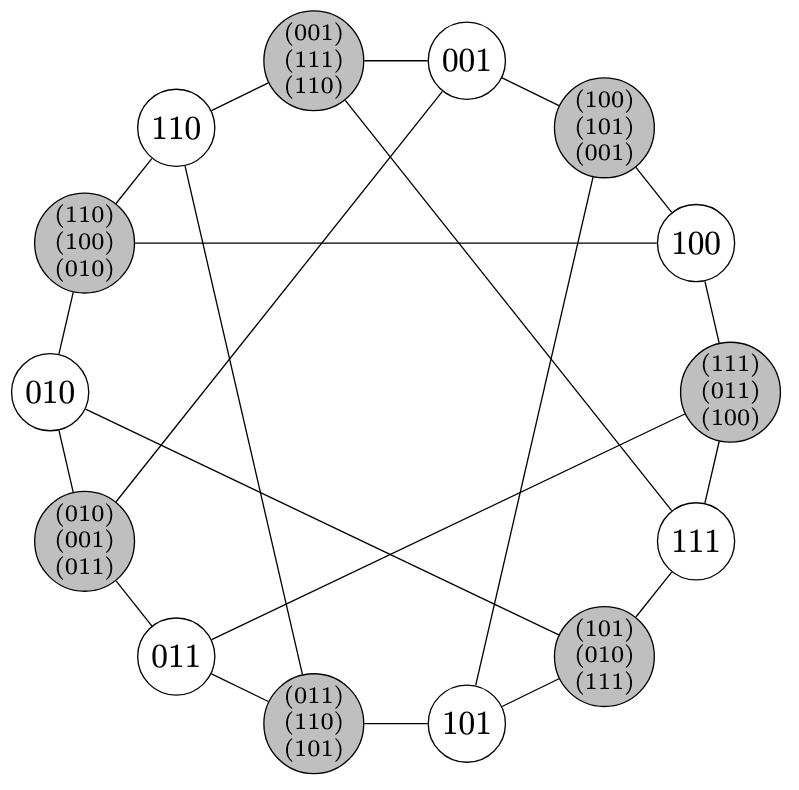}
    \caption{Bipartite Graph of Fano Plane}
    \label{fig:Heawood}
\end{figure}

The incidence graph of a projective plane over field $\F_q$ contains vertices that represent the points of the projective plane and vertices that represent the lines. The span of the vector associated with the points has dimension one over $\F_q$ and that the span of the vectors associated with the lines has dimension two over $\F_q$.  The girth of the bipartite graph is of size 6.  And every cycle represents a basis (via the points) for $(\F_{q})^3$. These incidence graphs are examples of a \textit{building} of type $A_2$.  Before the formal definition of a building, we will define Coxeter groups, Coxeter systems, standard cosets, and Coxeter complexes, the latter of which is used in the definition of a \textit{building}. 

\begin{defn}
    A \textit{Coxeter group} is a group with presentation $\langle s_i, i\in I \ | \ (s_is_j)^{m_{ij}}=1 \rangle$ for $I$ an index set and $m_{ij}\in \Z^+\cup \{\infty\}$ where $m_{ij}=1$ if $i=j$ and $m_{ij}=m_{ji}\geq 2$ if $i\neq j$.  If $m_{ij}=\infty$, then there is no relation between $s_i$ and $s_j$.  
\end{defn}

Two examples relevant to this paper are $A_2=\langle s_1,s_2 \ | \ (s_is_i)=(s_1s_2)^3 = 1 \rangle$ and $\tilde A_2 = \langle s_1,s_2,s_3 : (s_is_j)^3=s_is_i=1 \text{ for } i,j=1,2,3 \text{ and } i\neq j\rangle$.  Note that $A_2$ is a finite Coxeter group (called \textit{spherical}) and $\tilde A_2$ is an infinite Coxeter group (called \textit{affine} or \textit{Euclidean}).

\begin{defn}
    Let $W$ be the Coxeter group and $S$ its set of generators.  Then $(W,S)$ is a \textit{Coxeter system}.  Because isomorphic Coxeter groups need not have the same generators, the Coxeter system informs us of the generators that we are using to construct the Coxeter group.
\end{defn}

\begin{defn} [Definition 2.12, \cite{abramenko2008buildings}]
    Let $(W,S)$ be a Coxeter system and let $T\subseteq S$.  Then the group generated by $T$ is a \textit{standard subgroup} of $W$ and $w<T>$ is a \textit{standard coset} of $<T>$ for any $w\in W$.  
\end{defn}

\begin{defn} \label{defn: Coxeter} [Definition 3.1, \cite{abramenko2008buildings}]
    For every Coxeter system $(W,S)$, we can associate a simplicial complex $\Sigma(W,S)$ as follows: Let $(W,S)$ be a Coxeter system and let $\Sigma(W,S)$ be the poset of standard cosets in $W$, ordered by reverse inclusion.  Thus if $A\preceq B$ in $\Sigma$, then $B\subseteq A$.  We call $\Sigma(W,S)$ the \textit{Coxeter complex} associated to $(W,S)$.  
\end{defn}

For spherical Coxeter groups, the generators can be realized as hyperplanes in Euclidean space over which elements of the space reflect.  Consider the generators' intersection to be the origin.  The angle of intersection of any two generators $s_i, s_j$ is $\frac{\pi}{m_{ij}}$ .  For example, the intersection angle between $s_1$ and $s_2$ 
of the Coxeter group $A_2$ is $\frac{\pi}{3}$.  We can reflect over hyperplanes to generate the Coxeter complex (see Figure \ref{fig:A_2}). 

For Euclidean Coxeter groups, the generators can be realized as hyperplanes in affine space over which elements of the space reflect.  Consider the Coxeter group $\tilde{A}_2$.  The intersection angle between every $s_i$ and $s_j$ is $\frac{\pi}{3}$.  This time, when drawing the hyperplanes we get an intersection that is an equilateral triangle, which we can call a fundamental domain.  Without loss of generality, we can consider that the intersection of hyperplanes $s_1$ and $s_2$ is at the origin.  This makes the hyperplane $s_3$ an affine reflection. (See Figure \ref{fig:s123})

When we keep reflecting the triangle and its corresponding images across the hyperplanes, we tessellate the Euclidean plane by equilateral triangles.  And every line formed by edges of triangles can be considered another hyperplane, all of which will be parallel to the initial hyperplanes.  We can pick any two intersecting hyperplanes and see that the link of the intersection point is a Coxeter complex of type $A_2$. (See Figure \ref{fig:tessCoxComp}.)

\begin{figure}[h!]
\begin{center}
    \begin{tikzpicture}[scale=0.5]
        \coordinate (A) at (0,0);
        \coordinate (B) at (4,0);
        \coordinate[label={[red] above:{$s_1$}}] (C) at (2,3.464);
        \coordinate[label={[blue] above:{$s_2$}}] (D) at (-2,3.464);
        \coordinate (E) at (-4,0);
        \coordinate (F) at (-2,-3.464);
        \coordinate (G) at (2,-3.464);
    
        \draw[draw opacity=0, fill=gray!30] (0,0) -- (2,3.464) 
            arc[start angle=60, end angle=120,radius=4cm] -- (0,0);
            
        \draw[draw opacity=0, fill opacity=0.3, fill=blue!30] (0,0) -- (-2,3.464) 
            arc[start angle=120, end angle=180,radius=4cm] -- (0,0);
    
        \draw[draw opacity=0, fill opacity=0.3, fill=red!30] (0,0) -- (4,0) 
            arc[start angle=0, end angle=60,radius=4cm] -- (0,0);

        \draw[draw opacity=0, fill opacity=0.3, fill=Plum!30] 
            (0,0) -- (-4,0) arc[start angle=180, end angle=240,radius=4cm]
            -- (0,0);
            
        \draw[draw opacity=0, fill opacity=0.3, fill=Plum!30] 
            (0,0) -- (4,0) arc[start angle=0, end angle=-60,radius=4cm]
            -- (0,0);
           
         \draw[draw opacity=0, fill opacity=0.5, fill=lightgray!30] 
            (0,0) -- (-2,-3.464) arc[start angle=240, end angle=300,radius=4cm] 
            -- (0,0);
   
        \draw (C) -- (A) -- (D)  
            pic["$\frac{\pi}{3}$", text=darkgray, draw=darkgray, ->, angle radius=1cm]
            {angle=C--A--D};
            
        \centerarc[red,thick,<->]({2.8*cos(60)}, {2.8*sin(60)})(10:110:0.73)
        \centerarc[blue,thick,<->]({2.8*cos(120)}, {2.8*sin(120)})(70:170:0.73)

        \node at (0,2.5) {$1$};
        \node at ({2.5*cos(150)}, {2.5*sin(150)}) { $s_2$};
        \node at ({2.5*cos(30)}, {2.5*sin(30)}) { $s_1$};
        \node at ({2.5*cos(210)}, {2.5*sin(210)}) { $s_2s_1$};
        \node at ({2.5*cos(-30)}, {2.5*sin(-30)}) { $s_1s_2$};
        \node at (0,-2) { $s_1s_2s_1$};
        \node at (0,-2.5) {=};
        \node at (0,-3) { $s_2s_1s_2$};

        \draw (A) edge[gray, dashed, thick,->] (B);
        \draw (A) edge[thick, draw=red,->] (C);
        \draw (A) edge[thick, draw=blue,->] (D);
        \draw (A) edge[gray, dashed, thick,->] (E);
        \draw (A) edge[thick, draw=red,->] (F);
        \draw (A) edge[thick, draw=blue,->] (G);
    \end{tikzpicture}
\end{center}
    \caption{Coxeter Complex $A_2$}
    \label{fig:A_2}
\end{figure}




        



\begin{figure}[h!]
    \centering
    \includegraphics[width=0.4\linewidth]{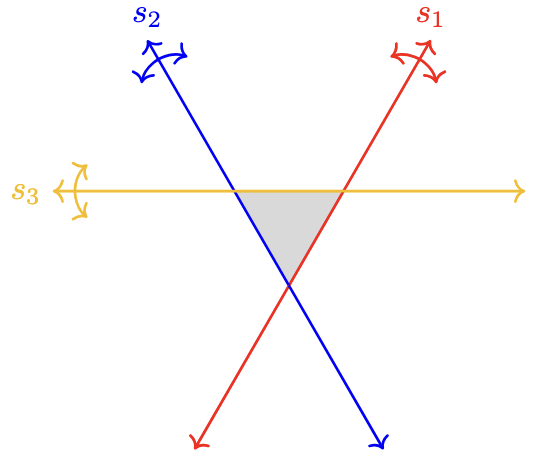}
    \caption{hyperplanes $s_1,s_2,s_3$ in $\tilde{A}_2$}
    \label{fig:s123}
\end{figure}

\begin{figure}[h!]
    \centering
    \includegraphics[width=0.7\linewidth]{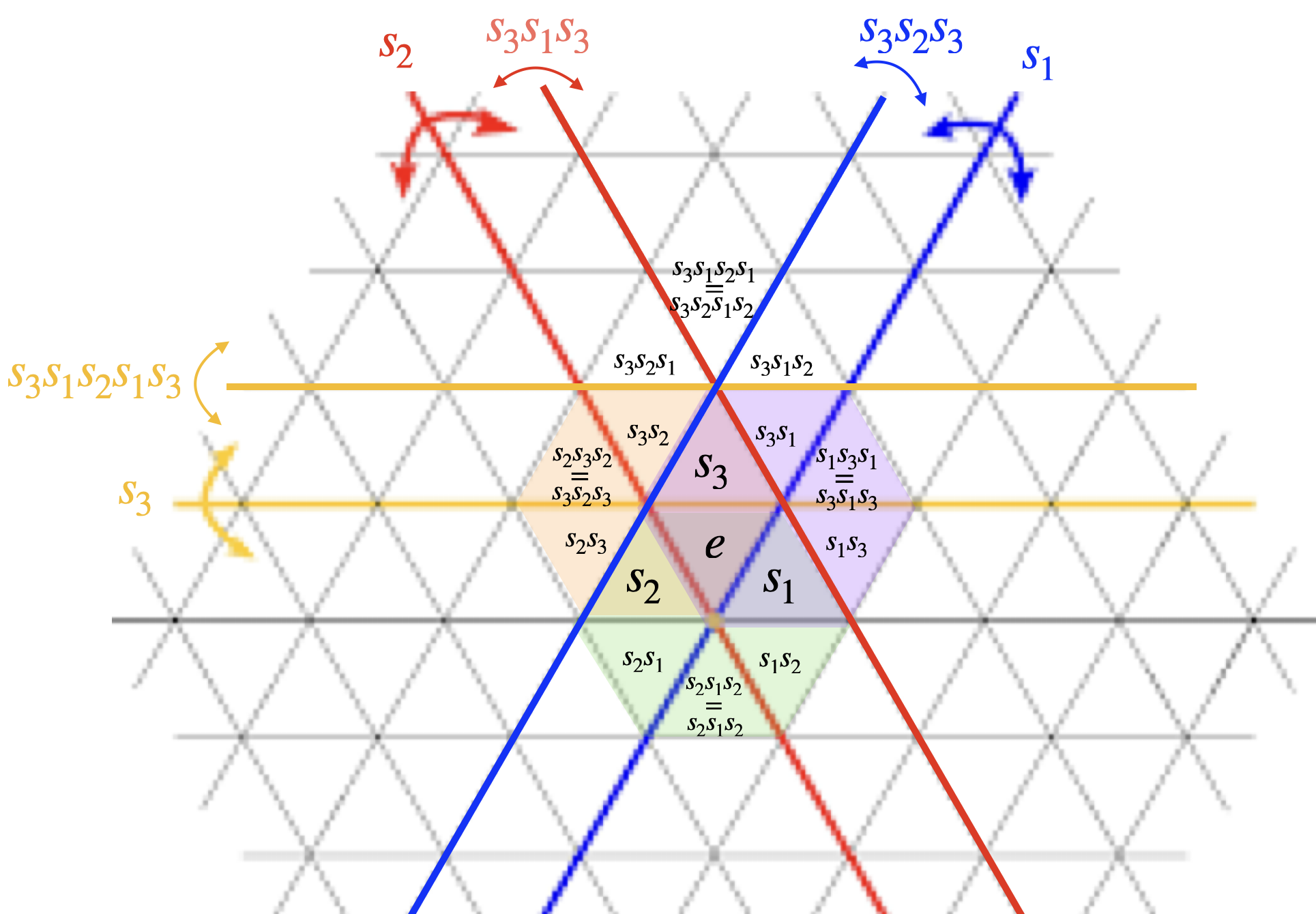}
    \caption{Tessellating Coxeter Complexes}
    \label{fig:tessCoxComp}
\end{figure}

\begin{defn} \label{defn: building} [Definition 4.1, \cite{abramenko2008buildings}; Definition 6.1, \cite{thomas2018geometric}] A \textit{building} is a simplicial complex made up of the union of subcomplexes (called \textit{apartments}) that satisfy the following axioms:
    \begin{enumerate}
        \item Every apartment is a Coxeter complex (defined below).
        \item For any two simplices, there is an apartment that contains both of        them.
        \item For any two apartments $A$, $A'$, there is an isomorphism $A\to A'$
                that fixes the intersection pointwise.
    \end{enumerate}
\end{defn}

Just as the link of a vertex in the $\tilde{A}_2$ Coxeter complex is an $A_2$ Coxeter complex, we have that the link of a vertex in the $\tilde{A}_2$ building is an $A_2$ building.  (See Figure \ref{fig:link}.) Note with respect to the star and link picture, every edge of a triangle has two more triangles coming off of it.  This is because the residue field is $\F_2$.  For residue field $\F_q$, there would be $q$ triangles coming off of each edge, which means that $q+1$ triangles share an edge.  Here is one example of an $\tilde{A}_2$ building with residue field $F_2$ \cite{2011.11707} (see Figure \ref{fig:tildeA2}):

\begin{figure}[h!]
    \centering
    \includegraphics[width=0.7\linewidth]{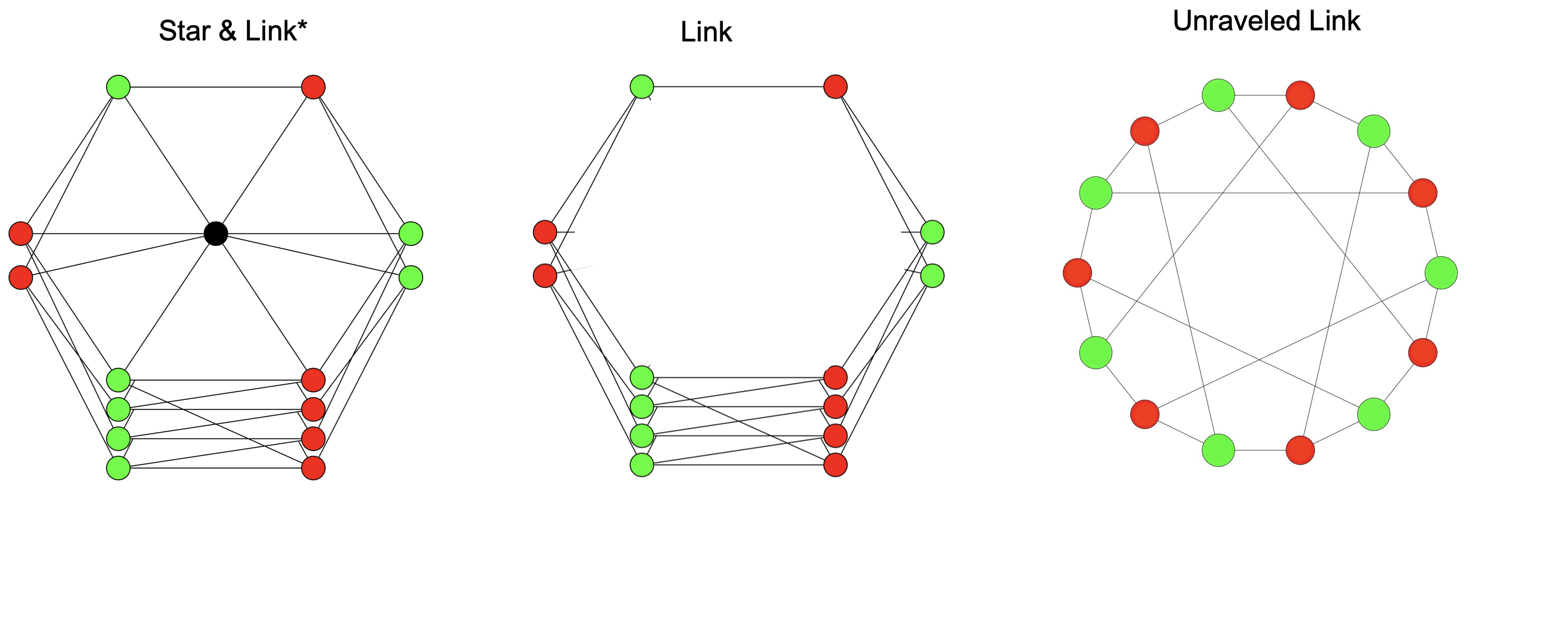}
    \vspace*{-7mm}
    \caption{Link of vertex in $\tilde{A}_2$ with residue field $\F_2$; image taken from \cite{Robertson}}
    \label{fig:link}
\end{figure}

\begin{figure}[h!]
    \centering
    \includegraphics[width=0.5\linewidth]{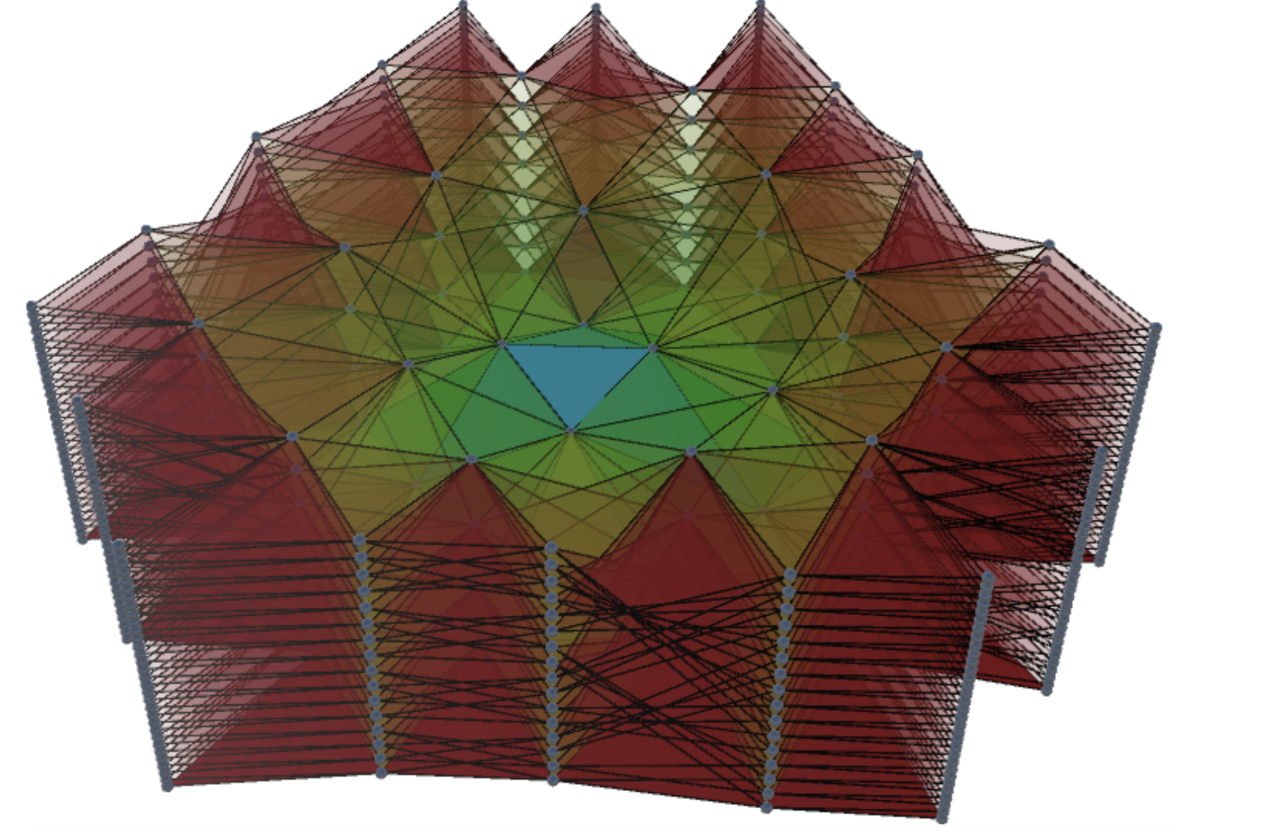}
    \caption{An $\tilde{A}_2$ building with residue field $\F_2$; image taken with permission from \cite{2011.11707}}
    \label{fig:tildeA2}
\end{figure}

\subsection{Triangle Presentations} \label{subsection:tripres}\hfill\\
\par This section covers the definition of a triangle presentation and what it means for two triangle presentations to be equivalent.  It also provides a concrete example of a triangle presentation.  Triangle presentations $\mathscr{T}$ are a key way to distinguish $\tilde{A}_2$ buildings from each other.  They become the relations in an abstract group $\Gamma_\mathscr{T}$ whose Cayley graph is the 1-skeleton of the building.  

\begin{defn} \label{defn: tripres} [Definition \S 3, \cite{cartwright1993groups}]
    Let $P$ and $L$ be the sets of points and lines respectively in a (finite or infinite) projective plane. A bijection $\alpha: P \to L$ is called a \textit{point-line correspondence}. A set $\mathscr{T}$ of triples $(a,b,c)$, where $a,b,c \in P$, is called a \textit{triangle presentation over $P$ compatible with $\alpha$} (or just \textit{triangle presentation} for short) if the following hold:
    \begin{enumerate}
        \item given $a,b\in P$, then $(a,b,c)\in \mathscr{T}$ for some $c\in P$ if and only if $b$ and $\alpha(a)$ are incident;
        \item $(a,b,c)\in\mathscr{T}$ implies that $(b,c,a)\in \mathscr{T}$;
        \item given $a,b \in P$, then $(a,b,c)\in\mathscr{T}$ for at most one $c\in P$.
    \end{enumerate}
\end{defn}

Let $\Gamma_\mathscr{T}$ denote the group with presentation $\langle a_i, \text{ for all } i\in P \,: \, a_ia_ja_k=1$ for all $(i,j,k)\in \mathscr{T} \rangle$. We can understand the relations of $\Gamma_\mathscr{T}$ by observing in Definition \ref{defn: tripres} that (1) enforces all relations to form triangles in the Cayley graph using the given generating set, (2) is the group property of conjugation, and (3) avoids setting distinct generators equal to each other.

Using Definition \ref{defn: tripres} parts (1) and (2), we can read triangle presentation element $(a,b,c)$ to mean that $b\in \alpha(a)$, $c\in \alpha(b)$, and $a\in \alpha(c)$. 
Note that Definition \ref{defn: tripres} lists elements $(a,b,c)$, $(b,c,a)$, and $(c,a,b)$ separately.  But exactly the same group is obtained by including only one of each cyclically permuted elements as relations.  This gives a group presentation with strictly fewer relations. We will heretofore refer to $\mathscr{T}$ as the cyclic permutation equivalence class of $\mathscr{T}_{\text{\tiny{CMSZ}}}$ from \cite{cartwright1993groups}.

The definition of a triangle presentation  can best be understood with an example (labeled $A.2$ in \cite{cartwright1993groupsb}).  (See Figure \ref{fig:TriPresF2}.) The points are labeled 0 through 6 and lines are three element subsets in keeping with the projective plane axioms:

\begin{figure}[h!]
    \centering
    $
    \begin{array}{ccccc}
        \alpha: P\to L & & & \mathscr{T} \\
        \alpha (0)=\{1,2,4\} & & & (3,3,1) & 3\in \alpha(3), 1\in \alpha(3), 
                3\in \alpha(1)\\
        \alpha (1)=\{3,4,6\} & & & (6,6,2) \\
        \alpha (2)=\{5,6,1\} & & & (5,5,4) \\
        \alpha (3)=\{0,1,3\} & & & (1,4,2) \\
        \alpha (4)=\{2,3,5\} & & & (0,1,6) \\
        \alpha (5)=\{4,5,0\} & & & (0,2,5) \\
        \alpha (6)=\{6,0,2\} & & & (0,4,3) \\
    \end{array}
    $
    \caption{Triangle presentation for a projective plane of order 2}
    \label{fig:TriPresF2}
\end{figure}

The following observation will be needed for the proof of Proposition \ref{prop:2^k}.

\begin{obs} \label{obs:tripres}
    Given a compatible $\alpha$, we can deduce the triangle presentation.  Worst case scenario, this can be done via brute force using the definition of a triangle presentation.  And given a triangle presentation, we can construct $\alpha$. This can be done in a straight-forward manner.  We know that each triangle presentation element, say $(a,b,c)$, tells us on which lines points $a$, $b$, and $c$ are.  If $a=b=c$, then we know that point $a$ is on line $\alpha(a)$.  Otherwise, we know on which lines three points are.  Thus, a triangle presentation fully encodes the map $\alpha:P\to L$.  

    Via the example shown in Figure \ref{fig:TriPresF2}, we can see how the triangle presentation encodes $\alpha$: 
        \begin{figure}[h!]
        \begin{center}
        \begin{tabular}{ c c c }
            ${\color{red}3}\in \alpha(3), {\color{red}1}\in \alpha(3), 
                {\color{red}3}\in \alpha(1)$ & & \hspace*{5mm}
                $\alpha(0)=\{{\color{Purple}1}, {\color{ForestGreen}2}, {\color{Bittersweet}4}\}$\\
            ${\color{blue}6}\in \alpha(6), {\color{blue}2}\in \alpha(6), 
                {\color{blue}6}\in \alpha(2)$ & & \hspace*{5mm}
                $\alpha(1)=\{{\color{red}3}, {\color{cyan}4}, {\color{Purple}6}\}$\\
            ${\color{orange}5}\in \alpha(5), {\color{orange}4}\in \alpha(5), 
                {\color{orange}5}\in \alpha(4)$ & & \hspace*{5mm}
                $\alpha(2)=\{{\color{blue}6}, {\color{cyan}1}, {\color{ForestGreen}5}\}$\\    
            ${\color{cyan}4}\in \alpha(1), {\color{cyan}2}\in \alpha(4), 
                {\color{cyan}1}\in \alpha(2)$ & & \hspace*{5mm}
                $\alpha(3)=\{{\color{red}3}, {\color{red}1}, {\color{Bittersweet}0}\}$\\
            ${\color{Purple}1}\in \alpha(0), {\color{Purple}6}\in \alpha(1), 
                {\color{Purple}0}\in \alpha(6)$ & & \hspace*{5mm}
                $\alpha(4)=\{{\color{orange}5}, {\color{cyan}2}, {\color{Bittersweet}3}\}$\\
            ${\color{ForestGreen}2}\in \alpha(0), 
                {\color{ForestGreen}5}\in \alpha(2), 
                {\color{ForestGreen}0}\in \alpha(5)$ & & \hspace*{5mm}
                $\alpha(5)=\{{\color{orange}5}, {\color{orange}4}, {\color{ForestGreen}0}\}$\\
            ${\color{Bittersweet}4}\in \alpha(0), 
                {\color{Bittersweet}3}\in \alpha(4), 
                {\color{Bittersweet}0}\in \alpha(3)$ & & \hspace*{5mm}
                $\alpha(6)=\{{\color{blue}6}, {\color{blue}2}, {\color{Purple}0}\}$
        \end{tabular}
        \end{center}
            \caption{Triangle Presentation encodes $\alpha$}
            \label{fig:TriPres encodes alpha}
        \end{figure}
\end{obs}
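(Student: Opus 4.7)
The observation splits into two implications: from a triangle presentation $\mathscr{T}$ one can recover the correspondence $\alpha$, and from $\alpha$ one can produce a compatible $\mathscr{T}$. My plan is to treat each direction separately using the three axioms of Definition~\ref{defn: tripres}.

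For $\mathscr{T}\mapsto\alpha$, the key is axiom~(1), which says that $b\in\alpha(a)$ if and only if there exists $c\in P$ with $(a,b,c)\in\mathscr{T}$. I would therefore define, for each $p\in P$,
\[
\alpha(p)=\bigl\{b\in P:\text{there exists }c\in P\text{ with }(p,b,c)\in\mathscr{T}\bigr\}.
\]
Axiom~(3) then ensures that the completing $c$ for each pair $(p,b)$ is unique, so this enumeration reads off all $q+1$ points on the line $\alpha(p)$ with no repetition. The color-coded example in Figure~\ref{fig:TriPres encodes alpha} makes the extraction mechanism transparent, and the degenerate case $p=b=c$ merely records the single incidence $p\in\alpha(p)$.

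For $\alpha\mapsto\mathscr{T}$, I would proceed by a brute-force search justified by the same axioms. For each ordered pair $(a,b)$ with $b\in\alpha(a)$, axiom~(1) forces some $c$ with $(a,b,c)\in\mathscr{T}$, and the cyclic symmetry of axiom~(2) then forces $c\in\alpha(b)$ and $a\in\alpha(c)$. So I would loop through candidate values $c$ in the set $\{c\in\alpha(b):a\in\alpha(c)\}$ and pick choices that are consistent across all pairs, invoking axiom~(3) to pin down $c$ uniquely for each pair once selected.

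The main subtlety is that the search in the second direction may admit multiple valid $\mathscr{T}$ for a single $\alpha$, precisely the phenomenon later quantified in Proposition~\ref{prop:2^k}. Existence of at least one compatible $\mathscr{T}$ follows from the construction of Cartwright, Mantero, Steger, and Zappa in~\cite{cartwright1993groups}, so the brute-force procedure is guaranteed to terminate with a valid output; efficiency, rather than existence, is the genuine obstacle here, and is addressed later by Algorithm~\ref{algorithm}.
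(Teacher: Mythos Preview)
Your proposal is correct and follows essentially the same approach as the paper: the observation in the paper is itself the argument, reading incidences $b\in\alpha(a)$, $c\in\alpha(b)$, $a\in\alpha(c)$ from each triple $(a,b,c)$ to recover $\alpha$, and invoking brute force in the other direction. One small remark: your appeal to \cite{cartwright1993groups} for existence is unnecessary, since the hypothesis already stipulates that $\alpha$ is \emph{compatible}, meaning by definition that some $\mathscr{T}$ exists; the brute-force search is guaranteed to succeed for that reason alone, and indeed not every $\alpha$ admits a compatible $\mathscr{T}$, as the paper notes just after this observation.
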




The following lemma, definition, and proposition show how to determine whether two triangle presentations are equivalent.  This concept will be heavily used in Section \ref{sec: equiv tri pres}.

\begin{lem}[Lemma 2.1, \cite{cartwright1993groupsb}] \label{lem: collineation and correlation}
    Let $\mathscr{T}$ be a triangle presentation compatible with point-line correspondence $\alpha: P\to L$.
    \begin{enumerate}
        \item Let $h$ be a collineation of $(P,L)$.  Then $h(\mathscr{T})=\{\, 
            \big(h(x),h(y),h(z)\big)  :  (x,y,z)\in \mathscr{T}\}$ is a triangle presentation compatible with the point-line correspondence $h\alpha h^{-1}:P\to L$.
        \item Let $\mathscr{C}$ be a correlation of $(P,L)$.  Then $\mathscr{C}\alpha(\mathscr{T}^{\text{rev}})= \{\, \big( \mathscr{C}\alpha(z), \mathscr{C}\alpha(y), \mathscr{C}\alpha(x) \big) : (x,y,z)\in \mathscr{T}\}$ is a triangle presentation compatible with the point-line correspondence $\mathscr{C}\alpha^{-1}\mathscr{C}^{-1}: P\to L$.  
    \end{enumerate}
\end{lem}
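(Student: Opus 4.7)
The plan is to verify the three axioms of Definition \ref{defn: tripres} for each transported set of triples against its asserted new point-line correspondence. In both parts the new correspondence is manifestly a bijection $P\to L$, being a composition of bijections. The essential geometric input in (1) is that $h$ preserves incidence, while in (2) it is that $\mathscr{C}$ swaps points with lines and reverses incidence; the need to reverse each triple and compose with $\mathscr{C}\alpha$ in (2) is dictated by the requirement to land back in $P\times P\times P$ with the correct compatibility.

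For part (1), set $\beta := h\alpha h^{-1}$. The translation is essentially a pushforward: $(h(a),h(b),h(c))\in h(\mathscr{T})$ iff $(a,b,c)\in\mathscr{T}$, and the incidence ``$b$ on $\alpha(a)$'' translates by the collineation $h$ to ``$h(b)$ on $h(\alpha(a))=\beta(h(a))$.'' Axiom (1) then follows by transporting the iff for $\mathscr{T}$ across $h$; axiom (2) is immediate since applying $h$ entrywise commutes with cyclic permutation; axiom (3) follows from the bijectivity of $h$.

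For part (2), write $\beta := \mathscr{C}\alpha^{-1}\mathscr{C}^{-1}$, and note that $\mathscr{C}\alpha$ restricts to a bijection $P\to P$, so $\mathscr{C}\alpha(\mathscr{T}^{\text{rev}})\subseteq P\times P\times P$ as required. The key simplifying identity, which I would state first, is that for any $z\in P$,
\[
\beta\bigl(\mathscr{C}\alpha(z)\bigr) \;=\; \mathscr{C}(z).
\]
With this in hand and the correlation rule ``$p$ is on $L$ iff $\mathscr{C}(L)$ is on $\mathscr{C}(p)$,'' axiom (1) unfolds as a chain: a triple $(\mathscr{C}\alpha(z),\mathscr{C}\alpha(y),\mathscr{C}\alpha(x))$ exists for some $x$ iff $(x,y,z)\in\mathscr{T}$ for some $x$, iff (by the cyclic property of $\mathscr{T}$) $(y,z,x)\in\mathscr{T}$ for some $x$, iff $z$ is on $\alpha(y)$, iff $\mathscr{C}\alpha(y)$ is on $\mathscr{C}(z)=\beta(\mathscr{C}\alpha(z))$. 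Axiom (2) reduces to the observation that $(x,y,z)\in\mathscr{T}$ implies $(z,x,y)\in\mathscr{T}$ via two cyclic shifts, which is exactly the rule that carries $(A,B,C)\mapsto(B,C,A)$ through $\mathscr{C}\alpha(\mathscr{T}^{\text{rev}})$. Axiom (3) follows from injectivity of $\mathscr{C}\alpha$ together with axiom (3) for $\mathscr{T}$ applied to the cyclic shift $(y,z,\cdot)$.

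The main obstacle is the bookkeeping in part (2): one must keep straight which objects are points and which are lines after each application of $\mathscr{C}$, and verify that reversing each triple and prepending $\mathscr{C}\alpha$ is precisely what matches up with $\beta$ rather than some other point-line correspondence. Once the identity $\beta(\mathscr{C}\alpha(z))=\mathscr{C}(z)$ is isolated, the incidence-reversal built into $\mathscr{C}$ does all the remaining work and each of the three axioms transfers cleanly from $\mathscr{T}$ to $\mathscr{C}\alpha(\mathscr{T}^{\text{rev}})$.
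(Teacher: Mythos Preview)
The paper does not supply its own proof of this lemma: it is quoted verbatim as Lemma~2.1 of \cite{cartwright1993groupsb} and left uncited, so there is nothing in the present paper to compare your argument against. That said, your proposal is correct and is exactly the kind of direct verification one would expect. The identity $\beta(\mathscr{C}\alpha(z))=\mathscr{C}(z)$ that you isolate in part~(2) is the right organizing observation, and your use of the cyclic shift $(x,y,z)\mapsto(y,z,x)$ to reduce axiom~(1) for the new triple set to axiom~(1) for $\mathscr{T}$ is clean. Nothing is missing.
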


\begin{defn}[page 170, \cite{cartwright1993groupsb}]
    Let $(P,L)$ be a projective plane with a correlation $\mathscr{C}$.  Let $\alpha,\alpha':P\to L$ be two point-line correspondences that admit triangle presentations $\mathscr{T}$ and $\mathscr{T}'$, respectively.  We say that $\mathscr{T}$ and $\mathscr{T}'$ are \textit{equivalent} if $\mathscr{T}'=h(\mathscr{T})$ or $\mathscr{T}'=h\mathscr{C}\alpha(\mathscr{T}^{\text{rev}})$ for some (possibly trivial) collineation $h$.
\end{defn}

\begin{prop}[Lemma 2.2, \cite{cartwright1993groupsb}] \label{prop:equiv tri pres means isom gp}
    If triangle presentations $\mathscr{T}$ and $\mathscr{T}'$ are equivalent, the associated abstract groups $\Gamma_\mathscr{T}$ and $\Gamma_{\mathscr{T}'}$ are isomorphic.
\end{prop}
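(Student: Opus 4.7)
The plan is to handle the two cases of the equivalence definition separately, in each case writing down an explicit map on generators that descends to a homomorphism of the presented groups, and then producing a two-sided inverse. The proof is formal: once the generator-level map is specified, the universal property of group presentations reduces everything to checking that each defining relator of $\Gamma_{\mathscr{T}}$ maps to the identity in $\Gamma_{\mathscr{T}'}$.

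In the first case, $\mathscr{T}' = h(\mathscr{T})$ for a collineation $h$ of $(P,L)$. Since $h$ is a bijection of $P$, I would define $\phi \colon \Gamma_{\mathscr{T}} \to \Gamma_{\mathscr{T}'}$ on generators by $a_i \mapsto a_{h(i)}$. To check this extends, I only need to verify the defining relators: for each $(x,y,z) \in \mathscr{T}$, the image $a_{h(x)} a_{h(y)} a_{h(z)}$ equals $1$ in $\Gamma_{\mathscr{T}'}$ because $(h(x), h(y), h(z)) \in h(\mathscr{T}) = \mathscr{T}'$ by construction. Applying the same recipe with the collineation $h^{-1}$ produces the two-sided inverse.

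In the second case, $\mathscr{T}' = h\mathscr{C}\alpha(\mathscr{T}^{\mathrm{rev}})$, where $\mathscr{C}$ is a correlation and $h$ a collineation. The composition $\psi := h\mathscr{C}\alpha \colon P \to P$ is a bijection, but by Lemma~\ref{lem: collineation and correlation}(2) the equivalence also reverses the order of entries, so $\mathscr{T}' = \{(\psi(z), \psi(y), \psi(x)) : (x,y,z)\in \mathscr{T}\}$. This order reversal suggests accompanying the substitution by an inversion: I would define $\phi \colon \Gamma_{\mathscr{T}} \to \Gamma_{\mathscr{T}'}$ by $a_i \mapsto a_{\psi(i)}^{-1}$. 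For each $(x,y,z)\in \mathscr{T}$ one then computes
\[
\phi(a_x a_y a_z) \;=\; a_{\psi(x)}^{-1} a_{\psi(y)}^{-1} a_{\psi(z)}^{-1} \;=\; \bigl(a_{\psi(z)} a_{\psi(y)} a_{\psi(x)}\bigr)^{-1},
\]
and the inner product is $1$ because $(\psi(z), \psi(y), \psi(x)) \in \mathscr{T}'$. A parallel map using $\psi^{-1}$, together with the fact that $\mathscr{C}^{-1}$ is again a correlation, furnishes the two-sided inverse.

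The main obstacle is the bookkeeping in the second case: one has to track carefully how the correlation $\mathscr{C}$, the point-line correspondence $\alpha$, and the reversal $(\cdot)^{\mathrm{rev}}$ compose, and recognize that the inversion on generators is exactly what compensates for reversal, turning the cyclic relator $a_x a_y a_z$ into the inverse of the cyclic relator $a_{\psi(z)} a_{\psi(y)} a_{\psi(x)}$ of $\Gamma_{\mathscr{T}'}$. The freedom to represent $\mathscr{T}$ by any cyclic permutation of each triple (as noted after Definition~\ref{defn: tripres}) causes no additional trouble, because a cyclic permutation of a relator is a conjugate of it in the free group and hence equally trivial in the quotient.
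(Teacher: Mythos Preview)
The paper does not supply its own proof of this proposition; it is quoted verbatim as Lemma~2.2 of \cite{cartwright1993groupsb} and used as a black box. So there is nothing in the present paper to compare your argument against.

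That said, your argument is correct and is the standard one. Two minor remarks. First, in the second case you do not need to invoke that $\mathscr{C}^{-1}$ is again a correlation to build the inverse: once $\psi=h\mathscr{C}\alpha$ is known to be a bijection $P\to P$, the map $a_j\mapsto a_{\psi^{-1}(j)}^{-1}$ on generators of $\Gamma_{\mathscr{T}'}$ already sends each relator $a_{\psi(z)}a_{\psi(y)}a_{\psi(x)}$ to $(a_xa_ya_z)^{-1}=1$, and composing with your $\phi$ yields the identity on both sides directly. Second, your appeal to Lemma~\ref{lem: collineation and correlation}(2) really uses both parts of that lemma, since the definition of equivalence allows a further collineation $h$ after $\mathscr{C}\alpha$; you have absorbed this correctly into $\psi$, but it is worth saying so explicitly.
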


Some maps $\alpha$ admit more than one triangle presentation as will be shown in Proposition \ref{prop:2^k}.  But every triangle presentation is associated with a unique map $\alpha$.  Further, very few maps admit a triangle presentation.  One can see that the complexity of finding via brute force the maps that do admit a triangle presentation grows factorially with respect to the order of the projective plane:  a projective plane of order $q$ has $(q^2+q+1)$! bijective maps.

The main theorems of \cite{cartwright1993groupsb} elucidate this complexity:

\begin{thm} [Theorem 1, \cite{coordinatizing}]
    For $q=2$, there are eight inequivalent triangle presentations whereby four $\Gamma_\mathscr{T}$'s embed into $\PGL\big(3,\F_2\llrrparen{t} \big)$ and another four embed into $\PGL(3,\Q_2)$.
\end{thm}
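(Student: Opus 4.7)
The plan is to proceed by exhaustive classification on the Fano plane and then invoke the known rigidity of $\tilde A_2$ lattices of small thickness. First, I would enumerate all $7! = 5040$ point-line correspondences $\alpha: P \to L$ on the Fano plane of Figure \ref{fig:fano}, and for each $\alpha$ run through Definition \ref{defn: tripres} to collect all compatible sets $\mathscr{T}$. Condition (1) forces, for each point $a$, the multiset of first coordinates $\{a\}$ to be paired with each $b$ on the line $\alpha(a)$, which together with (3) reduces the search to a matching problem on each incident pair; condition (2) then cuts the search space by a factor of three via cyclic rotation. This is a finite brute-force computation, easily automated, and produces the complete list of pairs $(\alpha, \mathscr{T})$.

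Next, I would quotient the set of all triangle presentations by the equivalence relation defined before Proposition \ref{prop:equiv tri pres means isom gp}. The group of collineations of the Fano plane is $\PGL(3,\F_2)$ of order $168$, which extends by the duality correlation $\mathscr{C}$ to a group of order $336$ acting on triangle presentations by the two formulas in Lemma \ref{lem: collineation and correlation}. Computing orbits of this action on the list from the previous step should yield exactly eight orbits, giving the eight inequivalent triangle presentations. By Proposition \ref{prop:equiv tri pres means isom gp}, the isomorphism type of $\Gamma_\mathscr{T}$ depends only on the orbit.

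For the embedding claim, I would use the fact that any $\Gamma_\mathscr{T}$ acts simply transitively by type-rotating automorphisms on the vertices of an $\tilde A_2$ building of thickness $q+1 = 3$, hence with residue field of order $2$; and that if this building is Bruhat--Tits, then up to isomorphism the only local fields producing residue field $\F_2$ are $\F_2\llrrparen{t}$ and $\Q_2$. For the four orbits containing the "Tits-type" presentation $\Gamma_0$ of Cartwright--Mantero--Steger--Zappa, embeddings into $\PGL\big(3, \F_2\llrrparen{t}\big)$ are essentially tautological from the construction recalled in the introduction. The remaining four require an explicit realization in $\PGL(3,\Q_2)$, which can be obtained by exhibiting seven matrices in $\PGL(3,\Z_2)$ whose pairwise products satisfy precisely the relations of $\mathscr{T}$.

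The main obstacle is reliably distinguishing the two characteristics at the level of the abstract group presentation $\Gamma_\mathscr{T}$. My plan here would be to use a characteristic-sensitive invariant: for instance, a residual-finiteness or torsion calculation (the positive-characteristic building affords lattices with $p$-torsion in their commensurators coming from the Frobenius, which is absent in the $\Q_2$ case), or, more concretely, to compare against the complete classification of $\tilde A_2$ lattices of thickness $3$ in \cite{cartwright1993groupsb} where the split into four plus four is made by direct matching against explicit Bruhat--Tits data. Either route reduces the final step to a finite, verifiable check over the eight orbit representatives, completing the count as stated.
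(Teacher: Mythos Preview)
The paper does not supply a proof of this statement; it is quoted as a result from \cite{coordinatizing} purely to illustrate the complexity of the classification problem, so there is no in-paper argument to compare yours against.

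On its own merits, your enumeration-and-orbit strategy for producing the eight equivalence classes is sound and is exactly what the original authors did. The embedding half, however, has real gaps. You write ``if this building is Bruhat--Tits'' and then never discharge the hypothesis: for $q=2$ all eight buildings do turn out to be linear, but that is a \emph{conclusion} of the classification, not an input to it, and already at $q=3$ it fails (65 of the 89 classes are exotic, as the paper records in the very next cited theorem). Your claim that $\F_2\llrrparen{t}$ and $\Q_2$ are the only local fields with residue field $\F_2$ is also false as stated---any totally ramified finite extension of either field again has residue field $\F_2$---so the $4+4$ dichotomy cannot be read off from the residue field alone. In \cite{coordinatizing} the split is obtained not via an abstract invariant but by explicitly exhibiting, for each of the eight representatives, seven matrices in $\PGL(3,K)$ with $K=\F_2\llrrparen{t}$ or $K=\Q_2$ satisfying the triangle relations, and then checking that the resulting subgroup acts on the Bruhat--Tits building as required. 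Your final paragraph effectively concedes this by deferring back to \cite{cartwright1993groupsb}; that is honest, but it means the embedding portion of your proposal is a citation rather than an argument.
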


\begin{thm} [Theorem 2, \cite{cartwright1993groupsb}]
    For $q=3$, there are 89 inequivalent triangle presentations whereby 16 $\Gamma_\mathscr{T}$'s embed into $\PGL\big(3,\F_3\llrrparen{t} \big)$, another 8 embed into $\PGL(3,\Q_3)$, and the remaining 65 (called exotic $\tilde{A}_2$ buildings) do not embed in any $\PGL(3,K)$ for any local field $K$ with residue field $\F_3$.
\end{thm}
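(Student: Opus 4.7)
The plan is to prove this classification theorem by combining an exhaustive enumeration of triangle presentations on the projective plane of order $3$ with a group-theoretic realization test for each equivalence class. Since the projective plane of order $3$ is unique up to isomorphism (the Desarguesian plane $PG(2,\F_3)$ with $13$ points and $13$ lines), the first step is to enumerate all compatible pairs $(\alpha,\mathscr{T})$. By Observation \ref{obs:tripres}, a triangle presentation determines its point-line correspondence $\alpha$, so one may organize the search by $\alpha$. Rather than iterating over all $13!$ bijections, I would use the collineation-correlation group of $PG(2,\F_3)$, of order $2|\PGL(3,\F_3)|$, to restrict to orbit representatives, and perform a backtracking search for $\mathscr{T}$ satisfying conditions (1)--(3) of Definition \ref{defn: tripres}. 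By Algorithm \ref{algorithm}, suitably generalized beyond the PDS-adapted case, this enumeration is feasible.

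The second step is to group the found triangle presentations into equivalence classes via Lemma \ref{lem: collineation and correlation}: two presentations are equivalent if they are related by a collineation or by a correlation composed with the reversal map $(x,y,z)\mapsto(z,y,x)$. By Proposition \ref{prop:equiv tri pres means isom gp} this simultaneously groups the associated abstract groups into isomorphism classes. Counting orbits under this action is then expected to yield the number $89$.

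Third, I would separate out the $16$ triangle presentations whose $\Gamma_\mathscr{T}$ embeds into $\PGL\big(3,\F_3\llrrparen{t}\big)$ using Theorem \ref{thm:main thm}: these are precisely the ones arising from perfect difference sets of order $3$ invariant under multiplication by $3$, giving the ``Tits type'' groups $\Gamma_0$ of \cite{cartwright1993groups}. Proposition \ref{prop:2^k} accounts for the $2^k$ phenomenon in passing from a PDS to a triangle presentation; enumeration of such PDSs modulo the collineation-correlation action gives the count $16$. A parallel construction over $\Q_3$, via arithmetic lattices in $\PGL(3,\Q_3)$ acting simply transitively on the vertices of the Bruhat--Tits building and admitting a type-rotating automorphism, should yield the $8$ additional classes.

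The main obstacle will be showing that the remaining $65$ triangle presentations give exotic buildings, i.e.\ that their groups $\Gamma_\mathscr{T}$ admit no embedding into $\PGL(3,K)$ for any local field $K$ with residue field $\F_3$. Unlike the positive cases, non-embedding cannot be verified by exhibiting a map. The standard approach, as in \cite{cartwright1993groupsb}, is to extract an arithmetic invariant --- for instance the trace of a putative three-dimensional representation of $\Gamma_\mathscr{T}$ evaluated on a carefully chosen element, or a constraint coming from the commutator and unipotent structure of a lattice in $\PGL(3,K)$ --- and to show that this invariant is incompatible with residue characteristic $3$ for exactly these $65$ classes. Verifying this obstruction across all exotic candidates completes the proof.
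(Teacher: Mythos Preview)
The paper does not prove this theorem; it is quoted verbatim as Theorem~2 of \cite{cartwright1993groupsb} and serves only as background illustrating the complexity of classifying triangle presentations. There is therefore no ``paper's own proof'' against which to compare your proposal --- the entire classification, including the computer enumeration and the arithmetic realization tests, is deferred to the cited reference.

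That said, your outline contains a substantive error in the third step. You assert that the $16$ equivalence classes embedding into $\PGL\big(3,\F_3\llrrparen{t}\big)$ are ``precisely the ones arising from perfect difference sets of order $3$ invariant under multiplication by $3$,'' invoking Theorem~\ref{thm:main thm} and Proposition~\ref{prop:2^k} to count them. The paper's own results contradict this: Theorem~\ref{thm: PDS collineation or correlation} together with the Remark that follows it show that \emph{all} triangle presentations constructed from \PDS s fixed by $q$ are mutually equivalent, yielding a \emph{single} equivalence class --- the Tits-type $\Gamma_0$ --- not sixteen. The $2^t$ presentations of Proposition~\ref{prop:2^k} are compatible with the same $\alpha$ and are related by collineations or correlations, hence equivalent. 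The other $15$ classes over $\F_3\llrrparen{t}$ (and the $8$ over $\Q_3$) are \emph{not} produced by the PDS machinery of this paper; identifying them requires the separate coordinatization and representation-theoretic analysis carried out in \cite{cartwright1993groupsb} and \cite{coordinatizing}, which you have not supplied.

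Your final step, ruling out embeddings for the $65$ exotic classes, is likewise only a gesture toward \cite{cartwright1993groupsb} rather than an argument. Since the paper itself cites the entire theorem without proof, this is not out of line with how the result is treated here, but it means your proposal is a plan for reproducing an external computation rather than a proof.
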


\hfill\\

\subsection{Perfect Difference Sets} \label{subsection:PDS} \hfill\\

\par This section covers definitions pertaining to perfect difference sets, shows how to create new perfect difference sets from existing \PDS s, and introduces \textit{multipliers} and theorems concerning them that are needed to prove the main result of this paper.

\begin{defn}
A $(v,k,\lambda)$-\textit{difference set} is a type of block design (see for example 
    \cite{dinitz2007handbook} or \cite{baumert2006cyclic}) consisting of a triple $(v,k,\lambda)$ that satisfy the following:
    \begin{enumerate}
        \item a group $G$ has order $v$,
        \item a subset $D$ of $G$ is called the \textit{difference set} and has size $k$,       and
        \item the multiset $\{d_id_j^{-1}: d_i, d_j\in D\}$ contains every non-identity element of $G$ exactly $\lambda$ times.
    \end{enumerate}
\end{defn}

In 1938, Singer \cite{singer1938theorem} connected difference sets to projective geometry by proving that difference sets can be constructed as follows:  
    \begin{enumerate}
        \item $v$ is the number of codimension 1 vector subspaces of 
                $\F_q^{n+1}$, where $q$ is a prime power, and   
                $G=\Z/{\left(\scriptscriptstyle \frac{q^{n+1}-1}{q-1}\right)}\Z$
        \item $k$ is the number of codimension 2 vector subspaces contained in each 
                codimension 1 subspace, and 
                $D=\{ i\in G : 
                \Tr_{\F_{q^{n+1}}^\times/\F_q^\times}(\zeta^i)=0 \}$, where  
                $\Tr$ is the trace function and $\zeta$ is a primitive element of $\F_{q^{n+1}}^\times/\F_q^\times$.  (Recall that $\F_q^{n+1}$ and $\F_{q^{n+1}}$ are isomorphic as $\F_q$-vector spaces.)
        \item $\lambda$ is the number of common codimension 2 vector subspaces in the intersection of any pair of codimension 1 subspaces \\
    \end{enumerate}

Note that when $n=2$, these numbers describe a projective plane:
    \begin{enumerate}
        \item $v=\frac{q^3-1}{q-1}=q^2+q+1$ (the total number of points/lines)
        \item $k=\frac{q^2-1}{q-1} = q+1$ (the number of points on a line)
        \item $\lambda=\frac{q-1}{q-1} = 1$ (the number of times two line intersect) \\
    \end{enumerate}
When $\lambda =1$, the difference set is called \textit{perfect}. 

In 1963, Halberstam and Laxton \cite{halberstam1963perfect} discovered a new method to find the subset $D$: 
 given a primitive element $\zeta$ for $\mathbb{F}_{q^3}$ over $\F_q$, write elements of $\mathbb{F}_{q^3}$ with basis $\{1, \zeta, \zeta^2\}$ over $\F_q$, and take the span of 
 $\begin{bsmallmatrix}
 1\\0\\0
 \end{bsmallmatrix}$ 
 and 
 $\begin{bsmallmatrix}
 0\\1\\0
 \end{bsmallmatrix}$.
 The exponents of the primitive elements corresponding to the elements in the span form a subset $D$ of $\mathbb{Z}/v\mathbb{Z}$, such that $D$ is a perfect difference set.

\begin{exmp} \label{exmp:PDS q=2}
Here is an example of a perfect difference set corresponding to $q=2$:
$(|\mathbb{Z}/7\mathbb{Z}|, |\{0,1,3\}|, 1)$.  Taking differences mod 7, we get the following:
$$\begin{matrix}
    0-1=6 & & 1-0=1 & & 3-0=3 \\
    0-3=4 & & 1-3=5 & & 3-1=2
\end{matrix}$$
\end{exmp}

For the rest of this paper, we will only be concerned with perfect difference sets where our group $G=\mathbb{Z}/v\mathbb{Z}$, $v=q^2+q+1$, and $q$ is a prime power.
Because our group is $\mathbb{Z}/v\mathbb{Z}$, we will use additive notation to refer to the differences of two elements in the subset $D$.  
Moreover, $\Z/v\Z$ is also a ring where our second binary operation is multiplication. 
Heretofore, we will refer to subset $D$ as the ``perfect difference set" unless an ambiguity arises.
\\
\subsubsection{Constructing Perfect Difference Sets from Perfect Difference Sets} \hfill\\
\par There are two ways to construct perfect difference sets from other perfect difference sets.  

\begin{defn}
    A \textit{shift} of a \PDS\ is an element of $\mathbb{Z}/v\mathbb{Z}$ added to the perfect difference set mod $v$.  
\end{defn}

For instance, from Example \ref{exmp:PDS q=2} where $D=\{0,1,3\}$, we have that $2+\{0,1,3\}=\{2,3,5\}$ is still a perfect difference set:  Given $d_1,d_2$ in an initial perfect difference set such that $d_1-d_2=x\in \mathbb{Z}/v\mathbb{Z}\setminus 
\{0\}$ and a ``shift" of $y\in \mathbb{Z}/v\mathbb{Z}$, we still have that $(y+d_1)-(y+d_2)=x$. 

The second way is to multiply the perfect difference set by a unit of $\mathbb{Z}/v\mathbb{Z}$.  The automorphisms of $\mathbb{Z}/v\mathbb{Z}$ correspond precisely to multiplication mod $v$ by an integer coprime to $v$.  Thus the image of a perfect difference set under an automorphism is again a perfect difference set. For instance, from Example \ref{exmp:PDS q=2}, we have that 
$3\times\{2,3,5\}=\{6,2,1\}$.  Given $d_1,d_2$ in an initial perfect difference set such that $d_1-d_2=x\in \mathbb{Z}/v\mathbb{Z}\setminus \{0\}$ and an integer $y$ that is relatively prime to $v$, we have $yd_1-yd_2=y(d_1-d_2)=yx.$\\
\\
\subsubsection{Multipliers}\label{sec: mutipliers} \hfill\\

\par This section defines the term \textit{multiplier} and includes the necessary background culminating in Theorem \ref{thm: 1 or 3 shifts fixed}. 

\begin{defn} 
    A \textit{multiplier}  is a \underline{nontrivial} automorphism of a perfect difference set's underlying group $G=\mathbb{Z}/v\mathbb{Z}$ that takes the \PDS\ to a shift of itself.  
\end{defn}

For example in $\mathbb{Z}/7\mathbb{Z}$, when we multiply the perfect difference set $\{2,3,5\}$ by 3, we get the \PDS\ $\{6,2,1\}$.  But 3 is not a multiplier of $\{2,3,5\}$ because there is no shift that takes $\{2,3,5\}$ to $\{6,2,1\}$.  However, $2\times\{2,3,5\}=\{4,6,3\}=1+ \{2,3,5\}$.  Thus 2 is a multiplier of $\{2,3,5\}$. 

We now recall some results concerning multipliers. 

\begin{thm}
    [Theorem 3.31, \cite{stinson2008combinatorial}]\label{lem: gcd(m,v)=1} If $m$ is a multiplier of a \PDS\ of order $v$, then $\gcd(m,v)=1$.
\end{thm}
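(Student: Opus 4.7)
The plan is to count the difference multiset of $D$ in two ways, deriving a multiset identity that forces $\gcd(m,v) = 1$.

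First, I would recall that because $D$ is a \PDS\ in $\mathbb{Z}/v\mathbb{Z}$ with $\lambda = 1$, the multiset $\{d_i - d_j : d_i, d_j \in D,\ d_i \neq d_j\}$ consists of each nonzero element of $\mathbb{Z}/v\mathbb{Z}$ appearing exactly once; call this multiset $G^\ast$.

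Next, write the multiplier condition as $mD = D + s$ for some shift $s \in \mathbb{Z}/v\mathbb{Z}$. Translation preserves pairwise differences, so the difference multiset of $D + s$ is still $G^\ast$. On the other hand, the difference multiset of $mD$ is $\{m(d_i - d_j) : i \neq j\}$, which is obtained by applying multiplication-by-$m$ elementwise to the difference multiset of $D$. Equating the two computations yields the multiset identity $m \cdot G^\ast = G^\ast$.

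Finally, I would derive $\gcd(m,v) = 1$ from this identity by contradiction. Suppose $d := \gcd(m,v) > 1$. The kernel of multiplication by $m$ on $\mathbb{Z}/v\mathbb{Z}$ is the cyclic subgroup $\langle v/d \rangle$ of order $d$, so it contains $d - 1 \geq 1$ nonzero elements; each of these lies in $G^\ast$ and maps to $0$ under multiplication by $m$. Hence $0$ appears with multiplicity at least $d - 1 \geq 1$ in the multiset $m \cdot G^\ast$, contradicting $0 \notin G^\ast$. The only real care needed in this argument is distinguishing multisets from sets: the identity $m \cdot G^\ast = G^\ast$ genuinely requires the multiset interpretation to produce the contradiction, since otherwise collapsing repeated values would obscure the presence of $0$.
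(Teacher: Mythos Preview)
Your proof is correct and follows essentially the same approach as the paper's: both establish that multiplication by $m$ sends the nonzero difference set $G^\ast$ of $D$ onto itself. The paper's endgame is slightly more direct---since $1 \in G^\ast$, there exists $y$ with $my = 1$ in $\mathbb{Z}/v\mathbb{Z}$, so $m$ is a unit---whereas you argue by contradiction via the kernel; these are minor variations on the same observation. (Your caveat about multisets versus sets is harmless but unnecessary here, since for $\lambda = 1$ the difference multiset is already a set and the contradiction $0 \in m\cdot G^\ast$ versus $0 \notin G^\ast$ works at the set level.)
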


\begin{thm}
    [Theorem 3.1, \cite{hall1951cyclic} or Theorem 3.1, \cite{baumert2006cyclic}] 
    \underline{First Multiplier} \underline{Theorem}:  
    Let $p$ be a prime divisor of $k-\lambda$ such that $\gcd(p,v)=1$ and $p>\lambda$.
    Then $p$ is a multiplier of a \PDS\ for group $\Z/v\Z$.
\end{thm}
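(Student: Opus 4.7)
The plan is to work in the integral group ring $\Z[G]$ with $G=\Z/v\Z$, identifying the \PDS\ $D$ with the formal sum $\sum_{d\in D}g^d$. The defining difference-set property translates to the ring identity
\[
D\cdot D^{(-1)} \;=\; n\cdot e \;+\; \lambda\cdot \widehat{G},
\]
where $n=k-\lambda$, $\widehat{G}=\sum_{g\in G}g$, $e$ is the identity, and $D^{(-1)}$ is the image of $D$ under $g\mapsto g^{-1}$. Since $\gcd(p,v)=1$, multiplication by $p$ is an automorphism $\sigma_p$ of $G$ which extends to a ring automorphism of $\Z[G]$, and proving ``$p$ is a multiplier'' amounts to producing $g^*\in G$ with $\sigma_p(D)=g^*\cdot D$.

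The first step uses the Frobenius congruence $D^{p}\equiv \sigma_p(D)\pmod p$ in $\Z[G]$, valid because $G$ is abelian. Setting $\beta:=\sigma_p(D)\cdot D^{(-1)}$ and applying the identity together with $p\mid n$ and $D\cdot\widehat{G}=k\widehat{G}$, I plan to show
\[
\beta \;\equiv\; D^{\,p-1}\bigl(n+\lambda\widehat{G}\bigr) \;\equiv\; \lambda k^{p-1}\widehat{G}\;\equiv\; \lambda\widehat{G}\pmod p,
\]
where the last step is Fermat's little theorem applied to $k$ (one checks $p\nmid k$, since $p\mid k$ combined with $p\mid n$ would force $p\mid\lambda$, contradicting $p>\lambda$). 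Writing $\beta=\lambda\widehat{G}+pM$ with $M\in\Z[G]$, the hypothesis $p>\lambda$ is precisely what guarantees each coefficient of $M$ is a nonnegative integer, since the coefficients of $\beta$ are nonnegative and congruent to $\lambda\pmod p$, and therefore lie in $\{\lambda,\lambda+p,\lambda+2p,\dots\}$.

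The second step is a moment computation. I plan to evaluate $\beta\cdot\beta^{(-1)}$ in two ways: once using $\sigma_p(D\cdot D^{(-1)})=n+\lambda\widehat{G}$ together with the collapsing rule $\widehat{G}\cdot x=(\text{coefficient sum of }x)\widehat{G}$, and once by expanding $\beta=\lambda\widehat{G}+pM$ and reading off the coefficient of $e$. Combining these with $\sum_g m_g=n/p$ (obtained by summing all coefficients of $\beta$) should yield
\[
\sum_g m_g^2 \;=\; \Bigl(\sum_g m_g\Bigr)^2,
\]
and for nonnegative integers this equality forces $M$ to be supported on a single element $g^*\in G$. Consequently $\beta=\lambda\widehat{G}+n\,g^*=g^*\cdot(n+\lambda\widehat{G})=g^*\cdot D\cdot D^{(-1)}$.

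To conclude, set $X:=\sigma_p(D)-g^*D$, so that $X\cdot D^{(-1)}=0$. Multiplying by $D$ and applying the key identity once more gives $X\,(n+\lambda\widehat{G})=nX+\lambda\,(X\widehat{G})=nX$, since $X$ has coefficient sum zero and hence $X\widehat{G}=0$. Torsion-freeness of $\Z[G]$ then forces $X=0$, i.e., $\sigma_p(D)=g^*D$, as desired. I expect the main obstacle to be the moment-comparison step: one must carefully juggle the mod-$p$ reduction, the total-mass identity, and the coefficient-of-$e$ identity to distill them into $\sum m_g^2=(\sum m_g)^2$, and the final cancellation requires the $\widehat{G}$-annihilation trick in place of any literal invertibility of $D^{(-1)}$ inside $\Z[G]$.
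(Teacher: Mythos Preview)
The paper does not supply its own proof of this statement; it is quoted from the literature (Hall \cite{hall1951cyclic}, Baumert \cite{baumert2006cyclic}) and used as a black box. Your proposal is correct and is precisely the classical group-ring argument that appears in those references: the Frobenius congruence $D^{p}\equiv\sigma_{p}(D)\pmod p$, the nonnegativity of $M$ forced by $p>\lambda$, the second-moment identity $\sum_{g}m_{g}^{2}=(\sum_{g}m_{g})^{2}$ obtained by comparing the coefficient of $e$ in $\beta\beta^{(-1)}=(n+\lambda\widehat{G})^{2}$ with the expansion of $\beta=\lambda\widehat{G}+pM$, and the cancellation step via $X\widehat{G}=0$ are all standard. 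One small point worth making explicit when you write it up: the final step needs $n\neq 0$ to conclude $X=0$ from $nX=0$, which is automatic here since $p\mid n$ forces $n\geq p\geq 2$.
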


\begin{cor} \label{cor:p multiplier of PDS}
When $q=p^n$ ($n\in \Z^+$), $v=q^2+q+1$, $k=q+1$, and $\lambda=1$, we have that $p>\lambda$ and $\gcd(p,v)=1$.  Thus $p$ will always be a multiplier of a \PDS\ for group $\Z/v\Z$.
\end{cor}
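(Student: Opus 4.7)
The plan is to apply the First Multiplier Theorem directly, after verifying its three hypotheses for the given parameters $v = q^2+q+1$, $k = q+1$, $\lambda = 1$, with $q = p^n$.

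First I would observe that $k - \lambda = (q+1) - 1 = q = p^n$, so $p$ is certainly a prime divisor of $k - \lambda$. Next, to see that $\gcd(p,v) = 1$, note that since $p \mid q$ we have $p \mid q^2 + q$, and hence $p \nmid q^2 + q + 1 = v$ (since $p \nmid 1$). Finally, since $p$ is a prime it satisfies $p \geq 2 > 1 = \lambda$.

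With the three hypotheses of the First Multiplier Theorem verified, the conclusion that $p$ is a multiplier of any \PDS\ for the group $\Z/v\Z$ follows immediately. There is no real obstacle here; the statement is essentially a direct specialization of the First Multiplier Theorem to the parameters of a projective plane of prime power order, and the only thing to check is the coprimality $\gcd(p, q^2+q+1) = 1$, which is immediate from $p \mid q$.
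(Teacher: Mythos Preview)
Your proposal is correct and takes essentially the same approach as the paper: the corollary is stated without proof in the paper precisely because it is an immediate specialization of the First Multiplier Theorem, and your verification of the three hypotheses (that $p \mid k-\lambda = p^n$, that $\gcd(p,v)=1$ since $p \mid q^2+q$, and that $p \ge 2 > 1 = \lambda$) is exactly what the paper intends.
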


\begin{prop}
    [\S 3, \cite{hall1951cyclic}]
    \label{prop: multiplier cyclic gp}
    If $m$ is a multiplier, so are all nontrivial elements of the cyclic group of automorphisms generated by $m$. 
\end{prop}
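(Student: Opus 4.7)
The plan is to prove the statement by a short induction on the exponent, leveraging only the fact that automorphisms of $\mathbb{Z}/v\mathbb{Z}$ are multiplications by units and that multiplication distributes over addition.

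First I would unpack the hypothesis: since $m$ is a multiplier, there exists a shift $x \in \mathbb{Z}/v\mathbb{Z}$ such that $mD = x + D$. I then claim that $m^k D$ is a shift of $D$ for every $k \geq 1$, and would establish this by induction on $k$. The base case $k = 1$ is the hypothesis. For the inductive step, assume $m^k D = y_k + D$ for some $y_k \in \mathbb{Z}/v\mathbb{Z}$; then
\[
m^{k+1} D \;=\; m(y_k + D) \;=\; m y_k + m D \;=\; (m y_k + x) + D,
\]
which is again a shift of $D$. (Unwinding the recursion gives the explicit shift $y_k = x(1 + m + \cdots + m^{k-1})$, but this closed form is not needed.)

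It then remains to observe that every nontrivial element of $\langle m \rangle \subseteq \mathrm{Aut}(\mathbb{Z}/v\mathbb{Z})$ has the form $m^k$ for some $k \geq 1$ with $m^k \not\equiv 1 \pmod{v}$. By Theorem \ref{lem: gcd(m,v)=1} we have $\gcd(m,v) = 1$, so $m^k$ is again a unit of $\mathbb{Z}/v\mathbb{Z}$ and thus a genuine (and, by hypothesis, nontrivial) automorphism of the underlying group; combined with the induction above, this shows $m^k$ satisfies the definition of a multiplier. There is essentially no obstacle here: the argument reduces to one line of distributivity plus an inductive bootstrap, and the subtleties (unit-closure, nontriviality) are either built into the hypothesis or follow immediately from the earlier multiplier theory.
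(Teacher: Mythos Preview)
Your argument is correct and matches the paper's own proof essentially line for line: both start from $mD = x + D$, compute $m^{k+1}D = m(y_k + D) = my_k + mD = (my_k + x) + D$, and conclude by induction that every power $m^k$ sends $D$ to a shift of itself. Your version is a bit more explicit about the induction and about why $m^k$ remains a nontrivial automorphism, but the core idea is identical.
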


In particular, Corollary \ref{cor:p multiplier of PDS} and Proposition \ref{prop: multiplier cyclic gp} together state that $p^n=q$ is always a multiplier of a \PDS\ for group $\Z/v\Z$.  This fact will be used in Section \ref{sec: main thm}.

\begin{defn}
    We say that a \PDS\ $D$ is \textit{fixed} by a multiplier $m$ if $D=mD$.
\end{defn}

\begin{thm} [Theorem 3.4\footnote{Details of Baumert's proof are fleshed out in Appendix A.2 of \cite{herron2025}}, \cite{baumert2006cyclic}] \label{thm: gcd(m-1,v)}
\label{lem: gcd shifts fixed by m}
    Say \PDS\ $D$ has a multiplier $m$.  Then there exists exactly $gcd(m-1,v)=g$ shifts fixed by $m$.  Moreover, if $D$ itself is fixed by $m$, then $m$ also fixes shifts $D+n(v/g)$ for $n=0,1,...,g-1$.
\end{thm}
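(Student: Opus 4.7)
I would recast the problem as a linear-congruence count in $\Z/v\Z$, then verify solvability via a short sum-of-elements argument that exploits the coprimality $\gcd(q+1,\,q^2+q+1)=1$.

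First I would parameterize the $v$ shifts of $D$ by elements $s\in\Z/v\Z$ via $s\mapsto D+s$; these are all distinct because any translation stabilizing $D$ would have order dividing $\gcd(k,v)=1$. Writing $mD=D+t$ for the shift recorded by the multiplier, the induced action on the parameter is $s\mapsto ms+t$, so $D+s$ is fixed by $m$ exactly when
\[ (m-1)s\equiv -t\pmod v. \]
The standard theory of linear congruences then says this has exactly $g=\gcd(m-1,v)$ solutions modulo $v$ when $g\mid t$, and none otherwise. So the whole theorem reduces to showing that at least one shift of $D$ is fixed by $m$.

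For existence, I would introduce the set-sum $\Sigma(D+s):=\sum_{d\in D+s}d\equiv \sigma+ks\pmod v$, where $\sigma=\sum_{d\in D}d$ and $k=q+1$. Since $v=(q+1)q+1$ forces $\gcd(k,v)=1$, there is a unique shift $D_0=D+s_0$ with $\Sigma(D_0)=0$. Because $m$ is a multiplier, $mD_0$ is again a shift of $D$, and its set-sum equals $m\cdot\Sigma(D_0)=0$. Uniqueness of the zero-sum shift then forces $mD_0=D_0$, producing the required fixed shift and hence $g\mid t$.

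For the \emph{moreover} part, when $D$ itself is fixed we have $t=0$, so the congruence collapses to $(m-1)s\equiv 0\pmod v$. Since $(m-1)/g$ is a unit modulo $v/g$, this reduces to $s\equiv 0\pmod{v/g}$, whose solutions are $s=n(v/g)$ for $n=0,1,\ldots,g-1$; these parameters label precisely the shifts $D+n(v/g)$ claimed. I expect the main conceptual step to be the set-sum trick for existence of a fixed shift; everything else is routine linear-congruence bookkeeping, and the crucial input $\gcd(q+1,q^2+q+1)=1$ is what makes the slick existence argument available in the setting of interest.
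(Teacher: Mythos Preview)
Your argument is correct, and it is genuinely different from the route taken in the paper (which follows Baumert). The paper encodes the projective plane by its $v\times v$ incidence matrix $A$, observes that the multiplier $m$ induces permutation matrices $P$ on lines (shifts) and $Q$ on points with $PAQ=A$, and then uses invertibility of $A$ together with the conjugation-invariance of trace to obtain $\Tr(P)=\Tr(Q)$; since $\Tr(Q)$ counts solutions of $mg\equiv g\pmod v$, this is $\gcd(m-1,v)$. By contrast, you bypass the incidence matrix entirely: you set up the affine action $s\mapsto ms+t$ on the shift parameter, reduce the fixed-shift count to a single linear congruence, and handle solvability via the set-sum trick using $\gcd(q+1,q^2+q+1)=1$. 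Your approach is more elementary and has the pleasant feature that the ``moreover'' clause drops out immediately from the congruence $(m-1)s\equiv 0$ once $t=0$, whereas the trace argument only delivers the count and leaves the explicit identification of the fixed shifts as a separate step. The trade-off is scope: the incidence-matrix argument works verbatim for any $(v,k,\lambda)$-difference set, while your existence step leans on $\gcd(k,v)=1$, which is special to the planar parameters the paper restricts to.
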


\begin{thm}[Theorem 3.5, \cite{baumert2006cyclic}]\label{lem: permutes shifts}
    If $m_1,m_2$ are two multipliers of the same \PDS\ $D$, then $m_2$ permutes the shifts fixed by $m_1$. 
\end{thm}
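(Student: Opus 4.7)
The plan is to work directly from the definitions and exploit the fact that $\mathbb{Z}/v\mathbb{Z}$ is a commutative ring. Let $D$ be the PDS, and let $m_1, m_2$ be multipliers of $D$, so there exist $r, t \in \mathbb{Z}/v\mathbb{Z}$ with
\[
m_1 D = D + r \qquad \text{and} \qquad m_2 D = D + t.
\]
Write $\mathcal{F}_1$ for the set of shifts of $D$ fixed by $m_1$. For a shift $D + s$, the condition $m_1(D + s) = D + s$ unravels to $(m_1 - 1)s \equiv -r \pmod{v}$, so $\mathcal{F}_1$ is a coset of the additive subgroup $\{s : (m_1 - 1)s \equiv 0\}$ in $\mathbb{Z}/v\mathbb{Z}$, and Theorem \ref{thm: gcd(m-1,v)} tells us $|\mathcal{F}_1| = \gcd(m_1 - 1, v)$.

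Next I would compute the action of $m_2$ on a fixed shift $D + s \in \mathcal{F}_1$. Since $m_2$ is an automorphism of $\mathbb{Z}/v\mathbb{Z}$ (Theorem \ref{lem: gcd(m,v)=1}), we get
\[
m_2(D + s) = m_2 D + m_2 s = D + (t + m_2 s),
\]
which is again a shift of $D$. To check it lies in $\mathcal{F}_1$ I need to verify $(m_1 - 1)(t + m_2 s) \equiv -r \pmod{v}$. Expanding the left side gives $(m_1 - 1)t + m_2 (m_1 - 1) s = (m_1 - 1) t - m_2 r$, using $(m_1 - 1)s \equiv -r$. So it suffices to establish the identity
\[
(m_1 - 1)\, t \;\equiv\; (m_2 - 1)\, r \pmod{v}.
\]

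The key step is obtaining this identity, and it comes cleanly from computing $m_1 m_2 D$ in two ways. Applying $m_1$ to $m_2 D = D + t$ gives $m_1 m_2 D = D + r + m_1 t$, while applying $m_2$ to $m_1 D = D + r$ gives $m_2 m_1 D = D + t + m_2 r$. Since multiplication in $\mathbb{Z}/v\mathbb{Z}$ is commutative, $m_1 m_2 = m_2 m_1$, so $r + m_1 t = t + m_2 r$, which rearranges to the desired identity.

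Finally I would conclude: $m_2$ sends $\mathcal{F}_1$ into itself; as an automorphism of $\mathbb{Z}/v\mathbb{Z}$ it is injective; and an injection of a finite set into itself is a permutation, so $m_2$ permutes $\mathcal{F}_1$. The argument is conceptually short — the only subtlety is the bookkeeping of the shift $t + m_2 s$ and the extraction of the identity $(m_1 - 1)t = (m_2 - 1)r$ from commutativity. I do not anticipate a genuine obstacle beyond this.
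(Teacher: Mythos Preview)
Your proof is correct, and at heart it rests on the same fact as the paper's: the multipliers $m_1$ and $m_2$ commute as ring elements of $\mathbb{Z}/v\mathbb{Z}$. However, the paper's argument is a two-line set-level computation that bypasses all of your shift bookkeeping. It simply observes that if $x+D$ is fixed by $m_1$, then
\[
m_1\bigl(m_2(x+D)\bigr) \;=\; m_2\bigl(m_1(x+D)\bigr) \;=\; m_2(x+D),
\]
so $m_2(x+D)$ is again a shift fixed by $m_1$ (it is a shift because $m_2$ is a multiplier). Your approach unpacks this same identity into coordinates: you translate ``fixed by $m_1$'' into the congruence $(m_1-1)s\equiv -r$, and then the commutativity $m_1m_2=m_2m_1$ resurfaces as the derived identity $(m_1-1)t\equiv(m_2-1)r$. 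What you gain is an explicit formula for where each fixed shift lands; what the paper's version gains is brevity and the avoidance of any need to characterize $\mathcal{F}_1$ or invoke Theorem~\ref{thm: gcd(m-1,v)} at all.
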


The following theorem (as rendered by Baumert in [Theorem 4.1, \cite{baumert2006cyclic}]) combines work of Evans and Mann \cite{evans1951simple} with one of Mann as cited in \cite{hall1947cyclic}.

\begin{thm}\label{thm: 1 or 3 shifts fixed}    
    One or three shifts of a \PDS\ are fixed by \underline{all} multipliers.  
    In particular, for $q=0$ and $q=2$ (mod 3) or $q=1$ and $p=2$ (mod 3), there is a unique shift fixed by all multipliers.  Otherwise, $q=1$ and $p=1$ (mod 3) and there are three shifts fixed by all multipliers.
\end{thm}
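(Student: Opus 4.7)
My plan is to use the distinguished multiplier $q = p^n$, guaranteed by Corollary~\ref{cor:p multiplier of PDS} and Proposition~\ref{prop: multiplier cyclic gp}, to produce a small over-set of $F_M$ (the set of shifts fixed by every multiplier), and then analyze how the remaining multipliers act on it via Theorem~\ref{lem: permutes shifts}. The first step is to rewrite $v = q^2+q+1 = (q-1)(q+2)+3$ so that Theorem~\ref{thm: gcd(m-1,v)} gives $|F_q| = \gcd(q-1,v) = \gcd(q-1,3)$, which is $1$ when $q \not\equiv 1 \pmod 3$ and $3$ when $q \equiv 1 \pmod 3$; since $F_M \subseteq F_q$, this already bounds $|F_M| \in \{1,3\}$.

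When $|F_q|=1$ (i.e.\ $q\equiv 0$ or $2 \pmod 3$), the unique shift in $F_q$ is permuted by every multiplier (Theorem~\ref{lem: permutes shifts}) and hence fixed, so $|F_M|=1$. When $|F_q|=3$ (i.e.\ $q\equiv 1 \pmod 3$), Theorem~\ref{thm: gcd(m-1,v)} lets me write $F_q = \{D_\ast + i(v/3) : i=0,1,2\}$. Writing $mD_\ast = D_\ast + k_m(v/3)$ with $k_m \in \{0,1,2\}$ for each multiplier $m$, a direct calculation gives that $m$ acts on $F_q$ as the affine permutation $\sigma_m(i) = k_m + mi \pmod 3$. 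By Theorem~\ref{lem: gcd(m,v)=1}, $m$ is coprime to $v$ hence to $3$, so $m \equiv 1$ or $2 \pmod 3$.

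In the sub-case $p \equiv 2 \pmod 3$, the permutation $\sigma_p(i) = k_p + 2i$ has exactly one fixed point, giving $|F_p \cap F_q|=1$; any other multiplier permutes this singleton trivially, so $|F_M|=1$. In the sub-case $p \equiv 1 \pmod 3$, every power of $p$ is $\equiv 1 \pmod 3$, making $\sigma_{p^k}$ either the identity or a $3$-cycle. The cocycle identity $c_{mm'} = c_m + m\,c_{m'}$ (where $c_m := k_m(v/3)$, derived from iterating $mD_\ast = c_m + D_\ast$) combined with $c_q = 0$ (since $qD_\ast = D_\ast$) yields $c_p(1+p+\cdots+p^{n-1}) \equiv 0 \pmod v$; reducing modulo $v/3$ and using $p \equiv 1 \pmod 3$ turns this into $k_p \cdot n \equiv 0 \pmod 3$. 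When $\gcd(n,3)=1$ this forces $k_p = 0$, so $\sigma_p$ is the identity and $F_q \subseteq F_M$, giving $|F_M|=3$.

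The main obstacle is the last sub-case when $3 \mid n$, where $k_p \cdot n \equiv 0 \pmod 3$ no longer pins down $k_p$, and in ruling out any multiplier $\equiv 2 \pmod 3$ lying outside $\langle p \rangle$. Both issues are addressed by Mann's finer argument recorded in Baumert~\cite{baumert2006cyclic}, which exploits the structure of $(\Z/v\Z)^\times$ and the observation that even a single multiplier $m \equiv 2 \pmod 3$ would immediately force $|F_M| \leq 1$, contradicting the three-fold structure already established on $F_q$ by the powers of $p$.
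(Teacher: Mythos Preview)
Your initial reduction via $\gcd(q-1,v)=\gcd(q-1,3)$ and the handling of $q\not\equiv 1\pmod 3$ match the paper's argument. In the case $q\equiv 1\pmod 3$, however, the paper takes a more geometric route that dispenses with your cocycle computation and thereby dissolves your first obstacle (the sub-case $3\mid n$) completely. Setting $w=v/3$, the points $0,w,2w$ are exactly the points of $\Z/v\Z$ fixed by $q$, and they are non-collinear (otherwise $w$ would occur twice as a difference, violating $\lambda=1$); hence the three lines through the pairs $\{0,w\}$, $\{0,2w\}$, $\{w,2w\}$ are distinct, are each fixed by $q$, and therefore constitute all of $F_q$. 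Now any multiplier $m$ is coprime to $3$ and sends $w\mapsto (m\bmod 3)\,w$, so it permutes $\{0,w,2w\}$---and hence $F_q$---purely according to its residue mod $3$, with no need to determine your constant $k_m$. In particular, if $p\equiv 1\pmod 3$ then $p$ fixes $0,w,2w$ pointwise and hence fixes every element of $F_q$, regardless of whether $3\mid n$; the detour through $k_p\cdot n\equiv 0\pmod 3$ is unnecessary, and its breakdown when $3\mid n$ is an artifact of your method rather than a genuine difficulty.

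Your second obstacle---ruling out a multiplier $m\equiv 2\pmod 3$ lying outside $\langle p\rangle$ when $p\equiv 1\pmod 3$---is real, and it is exactly the input supplied by the Evans--Mann/Mann argument recorded in Baumert. The paper's sketch relies on that same citation at this point, so here you and the paper are on equal footing; only the $3\mid n$ issue is a defect specific to your approach.
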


\begin{exmp} \label{exmp:thm 1 or 3}
Here are some examples of the perfect difference sets with shifts fixed by all multipliers:
\begin{itemize}
    \item[a)] $q=2$ ($q=2$ (mod 3)):\\
    \PDS\ $\{1,2,4\}$ is fixed by all multipliers mod 7 (i.e., multipliers 2 and 4).

    \item[b)] $q=3$ ($q=0$ (mod 3)):\\
    \PDS\ $\{0,1,3,9\}$ is fixed by all multipliers mod 13 (i.e., multipliers 3 and 9).

    \item[c)] $q=4$ ($q=1$ (mod 3) and $p=2$ (mod 3)):

        \begin{itemize}
            \item[$i$)] \PDS\ $\{0,2,7,8,11\}$ is fixed only by multipliers mod 21 that are congruent to 1 mod 3 (i.e., multipliers 4 and 16).
            \item[$ii$)] \PDS\ $\{0,1,4,14,16\}$ is fixed only by multipliers mod 21 that are congruent to 1 mod 3 (i.e., multipliers 4 and 16).
            \item[$iii$)] \PDS\ $\{7,9,14,15,18\}$ is fixed by \textit{all} multipliers mod 21 (i.e., multipliers 2, 4, 8, and 16).
        \end{itemize}
    \item[d)] $q=7$ ($q=1$ (mod 3) and $p=1$ (mod 3)):\\
        Perfect difference sets 
        $\{0,11,19,20,24,26,36,54\}$, 
        $\{0,1,5,7,17,35,38,49\}$, and 
        $\{16,19,30,38,39,43,45,55\}$
        are fixed by all multipliers mod 57 (i.e., multipliers 7 and 49).

\end{itemize}

\end{exmp}

Note that the discussion heretofore in this section showed the existence of \PDS s fixed by multipliers, not how many such \PDS s exist for any given multiplier.  For example, when $q=2$, there are exactly two \PDS s that are fixed by the multiplier 2 where neither is a shift of the other.  They are $\{1,2,4\}$ and $3,5,6$.  Enumerating the \PDS s fixed by all multipliers for a given order $q$ comes into play later in this paper (see Theorem \ref{lem: col or cor}).

\hfill

\section{Main Theorem and its Consequences}\label{sec: main thm}

This section furnishes the main theorem of this paper---that \PDS s encode triangle presentations.  It also provides an efficient algorighm to construct $\mathscr{T}$ and shows that \PDS s of the same order encode equivalent triangle presentations.  Finally, 
we look at extensions of $\Gamma_\mathscr{T}$ by particular automorphisms of $\Gamma_\mathscr{T}$.  
\\
\subsection{Main Theorem} \label{subsection:main thm} \hfill\\

\par We begin this section by first proving that the \PDS\ of order $q$ constructed by Singer \cite{singer1938theorem} is fixed by multiplier $q$.  This provides easy computation of one perfect difference of order $q$ that is fixed by multiplier $q$ that can be used to derive others and can be used in the algorithm to construct triangle presentations. Next, we prove a lemma that will be integral in proving the main theorem, Theorem \ref{thm:main thm}, which demonstrates that \PDS s encode triangle presentations.  Algorithm \ref{algorithm} provides an efficient method of constructing $\mathscr{T}$, and 
Corollary \ref{prop:2^k} shows that \PDS s in fact encode multiple triangle presentations.  

\begin{prop} \label{prop:trace 0 PDS fixed by multiplier q}
    The perfect difference set $D=\{d\in \F_{q^3/q}: \Tr(\zeta^d)=0\}$, where $\zeta$ is a primitive element in $\F_{q^3/q}$, is fixed by multiplier q.  
\end{prop}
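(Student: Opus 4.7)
The plan is to prove this by a direct computation using the Frobenius‐invariance of the trace. Recall that the trace $\operatorname{Tr} : \mathbb{F}_{q^3} \to \mathbb{F}_q$ is given by $\operatorname{Tr}(x) = x + x^q + x^{q^2}$, and that the underlying group here is $\mathbb{Z}/v\mathbb{Z}$ with $v = q^2+q+1 = (q^3-1)/(q-1)$, identified with $\mathbb{F}_{q^3}^\times / \mathbb{F}_q^\times$ via $d \mapsto \zeta^d \mathbb{F}_q^\times$.

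First I would note that the condition ``$\operatorname{Tr}(\zeta^d)=0$'' is well defined on the coset $\zeta^d \mathbb{F}_q^\times$: for any $c \in \mathbb{F}_q^\times$, $\operatorname{Tr}(c\zeta^d) = c\operatorname{Tr}(\zeta^d)$ by $\mathbb{F}_q$-linearity of the trace. In particular, since $d$ lives in $\mathbb{Z}/v\mathbb{Z}$, changing $d$ by a multiple of $v$ multiplies $\zeta^d$ by an element of $\mathbb{F}_q^\times$, so membership in $D$ depends only on the residue of $d$ modulo $v$.

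Next, the core computation. For any $d$, I would compute
\[
\operatorname{Tr}(\zeta^{qd}) = \zeta^{qd} + (\zeta^{qd})^q + (\zeta^{qd})^{q^2} = \zeta^{qd} + \zeta^{q^2 d} + \zeta^{q^3 d}.
\]
Since $\zeta$ has multiplicative order $q^3-1$, we have $\zeta^{q^3} = \zeta$, hence $\zeta^{q^3 d} = \zeta^d$. Therefore
\[
\operatorname{Tr}(\zeta^{qd}) \;=\; \zeta^d + \zeta^{qd} + \zeta^{q^2 d} \;=\; \operatorname{Tr}(\zeta^d).
\]
Thus $d \in D \iff \operatorname{Tr}(\zeta^d)=0 \iff \operatorname{Tr}(\zeta^{qd})=0 \iff qd \pmod{v} \in D$. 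This gives $qD = D$ in $\mathbb{Z}/v\mathbb{Z}$, which is the fixing statement.

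Finally, I would confirm that $q$ really is a multiplier, i.e.\ that multiplication by $q$ is a nontrivial automorphism of $\mathbb{Z}/v\mathbb{Z}$. Nontriviality is immediate since $v = q^2+q+1 > q$, so $q \not\equiv 1 \pmod{v}$; and $\gcd(q,v)=1$ is either checked directly (from $v \equiv 1 \pmod{q}$) or invoked from Corollary~\ref{cor:p multiplier of PDS} together with Proposition~\ref{prop: multiplier cyclic gp}, which already establish that $q = p^n$ is a multiplier. There is really no serious obstacle here — the proof is a one‑line Frobenius manipulation — and the main thing to be careful about is the mod $v$ bookkeeping, which is handled by the well‑definedness remark in the first paragraph.
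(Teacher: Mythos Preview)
Your proposal is correct and follows essentially the same approach as the paper: compute $\operatorname{Tr}(\zeta^{qd})$ directly, use $\zeta^{q^3 d}=\zeta^d$ to see it equals $\operatorname{Tr}(\zeta^d)$, and conclude $qD=D$. You add some extra care about well-definedness mod $v$ and the multiplier verification that the paper leaves implicit, but the core argument is the same one-line Frobenius computation.
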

\begin{proof}
    Say $d\in D$.  Then $\Tr(\zeta^d)=\zeta^d+\zeta^{dq}+\zeta^{dq^2}=0$.  Now $\Tr(\zeta^{dq})=\zeta^{dq}+\zeta^{dq^2}+\zeta^{dq^3}$.  But $q^2+q+1=0 \iff q^3+q^2+q=0 \iff q^3-1=0 \iff q^3=1$.  Thus, $\zeta^{dq^3}=\zeta^{d}$, which means that $\Tr(\zeta^d)=\Tr(\zeta^{qd})$.  
\end{proof}

Recall that Corollary \ref{cor:p multiplier of PDS} and Theorem \ref{prop: multiplier cyclic gp} showed us that $p^n=q$ is always a multiplier of a \PDS\ for group $\Z/v\Z$. This fact results in the following lemma:

\begin{lem} \label{lem:orbits}
    The elements of all perfect difference sets that are fixed by multiplier $q$ have orbit $\{d_i, qd_i, q^2d_i\}$ where the orbit order is either 3 (a triple) or 1 (a fixed point). 
\end{lem}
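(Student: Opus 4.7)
The plan is to exploit the fact that $v=q^2+q+1$ divides $q^3-1$, which forces the multiplicative order of $q$ in $(\Z/v\Z)^\times$ to divide $3$. Concretely, since $q^3-1=(q-1)(q^2+q+1)=(q-1)v$, we have $q^3\equiv 1\pmod{v}$, so multiplication by $q$ is an automorphism of $\Z/v\Z$ of order $1$ or $3$.

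Given a PDS $D$ fixed by the multiplier $q$, meaning $qD=D$ as subsets of $\Z/v\Z$, the map $d\mapsto qd$ restricts to a permutation of $D$. Because $q^3\equiv 1\pmod v$, this permutation satisfies $\sigma^3=\mathrm{id}$, so every one of its cycles has length dividing $3$; that is, length $1$ or $3$. Hence for each $d_i\in D$, the orbit is exactly $\{d_i,qd_i,q^2d_i\}$, and either (a) all three elements are distinct (orbit of size $3$), or (b) $qd_i\equiv d_i\pmod v$, i.e.\ $(q-1)d_i\equiv 0\pmod v$, which collapses the orbit to the single fixed point $\{d_i\}$.

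To finish I would note that, since $D=qD$, the set $D$ is a union of such orbits, so every element of $D$ lies in an orbit of size $1$ or $3$ that is entirely contained in $D$. This is exactly the claim of the lemma. No step is really an obstacle: the proof is essentially the one-line observation $q^3\equiv 1\pmod{q^2+q+1}$ combined with the defining property $qD=D$ of a fixed PDS. The only point worth flagging explicitly is that orbits of intermediate size cannot appear, which is immediate because $3$ is prime; this rules out any mixed behavior and gives the clean dichotomy between triples and fixed points that the subsequent algorithm (Algorithm \ref{algorithm}) and main theorem (Theorem \ref{thm:main thm}) will rely upon.
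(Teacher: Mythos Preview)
Your proposal is correct and follows essentially the same approach as the paper: both arguments rest on the identity $q^3\equiv 1\pmod{v}$ (the paper cites this from the proof of Proposition~\ref{prop:trace 0 PDS fixed by multiplier q}, whereas you derive it directly from $q^3-1=(q-1)v$), and then conclude that the permutation $d\mapsto qd$ on $D$ has only cycles of length $1$ or $3$. Your version is, if anything, slightly more explicit in noting that $qD=D$ guarantees the orbits lie inside $D$ and in invoking the primality of $3$; the paper instead handles the dichotomy by a direct case split on whether $d_i=qd_i$.
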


\begin{proof}
    Let a \PDS\ $D$ contain $k$ points $\{d_1, ..., d_k\}$ and be fixed by multiplier $q$. Recall from the proof of Proposition \ref{prop:trace 0 PDS fixed by multiplier q} that $q^3=1$. Thus the orbit of $d_i$ is $\{d_i, qd_i, q^2d_i\}$. If $d_i=qd_i$, the orbit of $d_i$ has order 1.  If $d_i\neq qd_i$, then $q^2d_i\neq q^3d_i=d_i$ and $q^2d_i\neq qd_i$, so the orbit has order 3.
\end{proof}


\begin{construction} \textit{Triangle presentations from perfect difference sets} 
\par Let $D=\{d_1, ..., d_k\}$ be a perfect difference set fixed by multiplier $q$ for the group $\Z/v\Z$ where $v=q^2+q+1$.  Let $\{0, 1, 2, ..., v-1\}$ be the points of the projective plane, and let $\{D, 1+D, 2+D, ..., v-1+D\}$ be the lines.  Define $\alpha: P\to L$ by $\alpha(i)=i+D$.  
We construct a triangle presentation corresponding to $\alpha$ using the orbits from Lemma \ref{lem:orbits} under the action of the multiplier $m=q$, or alternatively $m=q^2$, which is the inverse of $q$ in $\Z/v\Z$.  Both choices for $m$ share these critical features:  $mD=D$ and $1+m+m^2=0$ (mod $v$). 

Given $d_i\in D$ for $i\in \{1, ..., k\}$, let $\langle d_i, md_i, m^2d_i \rangle$ denote the following set of triples (to be used as relations) generated by the orbit of $d_i$ (which may be a single point, then $d_i=md_i=m^2d_i$):

$$\langle d_i, md_i, m^2d_i \rangle = \{(j, j+d_i, j+d_i+md_i) : j\in \Z/v\Z\}$$

\end{construction}


\vspace*{0mm}
\begin{thm}\label{thm:main thm}
    Let $m=q$ or $q^2$.  
    The set
    $\mathscr{T}_{\text{\tiny{CMSZ}}}=\bigcup_{i=1}^k\langle d_i, md_i, m^2d_i \rangle$ is a triangle presentation compatible with $\alpha$ as defined above.  
\end{thm}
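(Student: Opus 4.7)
The plan is to verify directly the three axioms of Definition \ref{defn: tripres} for the candidate set $\mathscr{T}_{\text{\tiny{CMSZ}}}$, exploiting the very explicit form of its elements as triples $(j,\,j+d_i,\,j+d_i+md_i)$ with $j\in \Z/v\Z$ and $d_i\in D$. Axioms (1) and (3) will reduce essentially to reading off the defining form of such a triple, while axiom (2) — cyclic closure — is where the real algebraic input lies, namely the two identities $mD=D$ and $1+m+m^2 \equiv 0 \pmod v$ highlighted in the Construction preceding the theorem.

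For axiom (1), given $a,b \in P = \Z/v\Z$, the existence of some $c$ with $(a,b,c)\in \mathscr{T}_{\text{\tiny{CMSZ}}}$ is equivalent to writing $b-a = d_i$ for some $d_i\in D$, i.e., $b\in a+D = \alpha(a)$; the converse is immediate by taking $c := a+d_i+md_i$. Axiom (3) is then also immediate: if $(a,b,c)$ and $(a,b,c')$ both appear, then $b-a$ pins down a unique $d_i\in D$, so $c = a+d_i+md_i$ is forced.

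The essential step is axiom (2). Starting from a triple $(j,\,j+d_i,\,j+d_i+md_i)\in \langle d_i, md_i, m^2d_i\rangle$, I would set $j' := j+d_i$ and $d_{i'} := md_i$, and compute
\[
(j',\, j'+d_{i'},\, j'+d_{i'}+md_{i'}) \;=\; (j+d_i,\; j+d_i+md_i,\; j+d_i+md_i+m^2d_i).
\]
The identity $1+m+m^2\equiv 0\pmod v$ collapses the third coordinate to $j$, so this is precisely the cyclic permutation $(b,c,a)$. Since $mD=D$, the element $d_{i'}=md_i$ lies again in $D$, hence $(b,c,a)\in \langle d_{i'}, md_{i'}, m^2d_{i'}\rangle$ is one of the generating sets contributing to the union defining $\mathscr{T}_{\text{\tiny{CMSZ}}}$.

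The main obstacle, then, is really just confirming that the two algebraic inputs apply to our choice $m\in \{q,q^2\}$. The relation $1+m+m^2\equiv 0\pmod v$ follows from $v=q^2+q+1$, which gives $q^3\equiv 1\pmod v$ and hence also $(q^2)^3\equiv 1$; the equality $mD=D$ is guaranteed by the standing hypothesis that $D$ is fixed by multiplier $q$, combined with Proposition \ref{prop: multiplier cyclic gp}, which lifts this to all powers of $q$. With both identities in hand, the verification of (2) proceeds as above, and the theorem follows.
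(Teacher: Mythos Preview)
Your proof is correct and follows essentially the same route as the paper: both arguments read off axioms (1) and (3) directly from the explicit form $(j,\,j+d_i,\,j+d_i+md_i)$ of the triples, and both handle axiom (2) by relabelling $j' = j+d_i$, $d_{i'} = md_i$ and invoking the two key identities $mD=D$ and $1+m+m^2\equiv 0\pmod v$. The only cosmetic difference is that the paper verifies (2) first and records the uniqueness in (3) via the formula $c = b + m(b-a)$, while you phrase it as ``$b-a$ pins down a unique $d_i$''; these are the same observation.
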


\begin{proof}


    
We verify that the three conditions in the definition of a triangle presentation \ref{defn: tripres} are met.  
We will start by first checking condition 2 followed by conditions 1 and 3.

\hfill\\
\underline{Condition 2}:  Elements of $\mathscr{T}_{\text{\tiny{CMSZ}}}$ are closed under cyclic permutation.
\\
\par It suffices to show that $(j+d_i, j+d_i+md_i, j)\in \mathscr{T}_{\text{\tiny{CMSZ}}}$ for all $j$ and $d_i$.  Let $\ell=j+d_i$ and $d_n=md_i$.  
We need to show that $j=\ell+d_n+md_n$.  
We have that $\ell+d_n+md_n= j+d_i +md_i + m(md_i)=j+d_i(1+m+m^2)=j$ (mod $v$) as required.

\hfill\\
\underline{Condition 1}:  Given $a,b\in P$, then $(a,b,c)\in \mathscr{T}_{\text{\tiny{CMSZ}}}$ for some $c\in P$ if and only if $b$ and $\alpha(a)$ are incident.
\\
\par
Given $a,b\in P$, assume there exists a $c\in P$ such that $(a,b,c)\in \mathscr{T}_{\text{\tiny{CMSZ}}}$. Then $b=a+d_i$ for some $d_i\in D$ and $c=a+d_i+md_i$.  
Now $\alpha(a)=a+D$; thus, $b=a+d_i\in a+D=\alpha(a)$.  
Therefore, $b$ and $\alpha(a)$ are incident as claimed.

Given $a,b\in P$, assume that $\alpha(a)$ and $b$ are incident.  
Then $b\in \alpha(a)=a+D$, so $b=a+d_i$ for some $d_i\in D$.  
Setting $c=a+d_i+md_i$, it follows by definition that $(a,b,c)=(a,a+d_i,a+d_i+md_i)\in \mathscr{T}_{\text{\tiny{CMSZ}}}$.

\hfill\\
\underline{Condition 3}: Given $a,b \in P$, then $(a,b,c)\in\mathscr{T}_{\text{\tiny{CMSZ}}}$ for at most one $c\in P$. 
\\
\par The definition of $\mathscr{T}_{\text{\tiny{CMSZ}}}$ determines $c$ by the formula $c=m(b-a)$.
\end{proof}




As a consequence of Theorem \ref{thm:main thm}, we see a new intrinsic connection between \PDS s and triangle presentations.  

\begin{algm} \label{algorithm}

Recall from Subsection \ref{subsection:tripres} that $\mathscr{T}$ is the cyclic permutation equivalence class of $\mathscr{T}_{\text{\tiny{CMSZ}}}$.  We now show how to efficiently construct the non-redundant $\mathscr{T}$ which equals $\mathscr{T}_{\text{\tiny{CMSZ}}}/\Z_3$ where $\Z_3=\langle s \rangle$ acts on $\mathscr{T}_{\text{\tiny{CMSZ}}}$ by $(a,b,c)\xmapsto{s} (b,c,a)$.  
Recall that $k=|D|$.  Let $f$ be the number of fixed points of the action of $q$ on $D$.  Then $t=\frac{1}{3}(k-f)$ is the number of order 3 orbits of this action.  The proof of condition 2 above shows that $\mathscr{T} = 
\bigcup_{d\in D'} \langle d,md,m^2d \rangle$ where $D'\subsetneq D$ is a set of orbit representatives in $D$.  Note that $|D'|=f+t$.  Thus, we see that $\mathscr{T}$, a set of size $\mathcal{O}(q^3)$, is compressed into the data of its corresponding perfect difference set, which has size $\mathcal{O}(q)$.

In particular, we bin the $k$ elements of a \PDS\ fixed by $q$ into triples as follows:  Pick an element of the \PDS, multiply it by $m$ and then $m^2$, remove this triple (possibly singleton) from the remaining elements of the \PDS\ and pick a remaining element of the \PDS\ to start this process over.  The final step is to take a triple, say $\langle a,b,c\rangle$ and compute $(b-a +i, c-b+i, a-c+i)$ for $i=0,...,q^2+q$.  Thus we see that the \PDS\ of size $\mathcal{O}(q)$ comprises all the data for $\mathscr{T} = 
\bigcup_{d\in D'} \langle d,md,m^2d \rangle$. 

\end{algm}

The below corollary follows immediately from the proof of Theorem \ref{thm:main thm} and the discussion above by observing that the proof also works when letting $m$ vary between $q$ and $q^2$ for each orbit.  But we provide another proof from the point of view that we can construct $\alpha$ given a triangle presentation (Observation \ref{obs:tripres}).

\begin{cor} \label{prop:2^k}
    Re-index $D'$ as $D'=\{d'_1, ..., d'_{f+t}\}$.  Then 
    there are at least $2^t$ distinct triangle presentations compatible with $\alpha$ of the form $\mathscr{T}=\bigcup_{i-1}^{f+t} \langle d'_i, m_id'_i, m_i^2d'_i \rangle$  where $m_i=q$ or $q^2$.  
\end{cor}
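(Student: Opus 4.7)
The plan is to split the statement into existence (each choice $(m_1,\dots,m_{f+t})\in\{q,q^{2}\}^{f+t}$ yields a valid triangle presentation compatible with $\alpha$) and distinctness (different choices yield different triangle presentations), noting that for fixed orbits $d'_i=qd'_i$ the two options for $m_i$ coincide, so only the $t$ non-fixed orbits carry genuine freedom.

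For existence, I would transcribe the proof of Theorem~\ref{thm:main thm} allowing $m$ to vary per orbit. The only algebraic input used there is the identity $1+m+m^{2}\equiv 0\pmod v$ to verify closure under cyclic permutation (condition~2), and this holds both for $m=q$ (from $v=q^{2}+q+1$) and for $m=q^{2}$ (multiply $1+q+q^{2}\equiv 0$ through by $q$ and use $q^{3}\equiv 1$). Each axiom check happens orbit-by-orbit, so the multiplier on each orbit representative can be chosen independently without disturbing closure, condition~1, or uniqueness of $c$. This yields at most $2^{t}$ candidates.

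For distinctness, suppose $(m_i)\neq(m'_i)$ at some non-fixed index $i^{*}$. I would show that $\tau=(0,\,d'_{i^{*}},\,d'_{i^{*}}(1+m_{i^{*}}))\in\mathscr{T}$ but none of its three cyclic rotations lies in $\mathscr{T}'$. Any such rotation, written as $(j,\,j+d'_k,\,j+d'_k+m'_k d'_k)$ coming from $\mathscr{T}'$, forces $d'_k$ to equal one of $d'_{i^{*}}$, $m_{i^{*}}d'_{i^{*}}$, or $-(1+m_{i^{*}})d'_{i^{*}}=m_{i^{*}}^{2}d'_{i^{*}}$, i.e., some element of the orbit $\{d'_{i^{*}},\,qd'_{i^{*}},\,q^{2}d'_{i^{*}}\}$. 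Since $D'$ contains a single representative per orbit and the orbit of $d'_{i^{*}}$ is non-fixed, the only possibility is $d'_k=d'_{i^{*}}$; comparing the third coordinate then pins $m'_{i^{*}}d'_{i^{*}}=m_{i^{*}}d'_{i^{*}}$ and hence $m'_{i^{*}}=m_{i^{*}}$, a contradiction. This proves injectivity of the assignment $(m_i)\mapsto\mathscr{T}$ on the non-fixed coordinates and gives the lower bound of $2^{t}$ distinct triangle presentations.

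The main obstacle is the cyclic case analysis in the distinctness step; its algebraic kernel is the pairwise distinctness of the three orbit elements $d'_{i^{*}},\,qd'_{i^{*}},\,q^{2}d'_{i^{*}}$ for a non-fixed orbit. The equality $qd'_{i^{*}}\equiv q^{2}d'_{i^{*}}\pmod v$ rearranges via $\gcd(q,v)=1$ to $(q-1)d'_{i^{*}}\equiv 0\pmod v$, and by the reasoning surrounding Theorem~\ref{thm: 1 or 3 shifts fixed} the solutions to $(q-1)x\equiv 0\pmod v$ all lie in the set of points fixed by multiplication by $q$ (either $\{0\}$, or $\{0,v/3,2v/3\}$ in the $\gcd(q-1,v)=3$ case); thus non-fixedness of $d'_{i^{*}}$ immediately rules out the collision and the case analysis goes through.
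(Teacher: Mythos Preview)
Your proposal is correct. The existence half coincides with what the paper itself says just before the corollary: the proof of Theorem~\ref{thm:main thm} goes through orbit-by-orbit with $m_i\in\{q,q^2\}$ chosen independently, because the only identity used is $1+m_i+m_i^2\equiv 0\pmod v$.

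The paper's written-out argument, however, takes a different tack for the ``compatible with $\alpha$'' part. Rather than re-verifying condition~(1) of Definition~\ref{defn: tripres}, it invokes Observation~\ref{obs:tripres} (that a triangle presentation determines its $\alpha$) and then checks directly, via explicit triples such as $(0,d,d+qd)$ and $(d+qd,\,2d+qd,\,d)$, that the incidence data read off from $\langle d,qd,q^2d\rangle$ and from $\langle d,q^2d,qd\rangle$ agree. Your approach gets compatibility for free from the axiom check; the paper's approach makes the ``same $\alpha$'' conclusion visible without re-running the whole proof of Theorem~\ref{thm:main thm}.

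On distinctness you go further than the paper, which simply asserts it. Your argument that only the rotation with first difference $d'_{i^*}$ can match a representative indexed by $D'$, and that comparing third coordinates then forces $m'_{i^*}=m_{i^*}$ via $(q-q^2)d'_{i^*}\equiv 0\Rightarrow (q-1)d'_{i^*}\equiv 0\Rightarrow qd'_{i^*}=d'_{i^*}$, is a clean way to fill that gap.
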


\begin{proof}

    Let $\{ d,qd,q^2d \}$ be  an order-3 orbit of $D$.  
    It is sufficient to show that the triangle presentation elements denoted by $\langle d,qd,q^2d \rangle$ and $\langle d,q^2d,qd \rangle$ enumerate the same points on a line.  
    Consider the triangle presentation element $(0,d,d+qd)$ that is associated with $\langle d,qd,q^2d \rangle$.  This means that $\alpha(0)$ contains $d$, $\alpha(d)$ contains $d+qd$, and $\alpha(d+qd)$ contains $0$.  And one triangle presentation element associated with $\langle d,q^2d,qd \rangle$ is $(0,d,d+q^2d)$, which means that $\alpha(0)$ also contains $d$.  Another triangle presentation element associated with $\langle d,q^2d,qd \rangle$ is $(d+qd,\ 2d+qd,\ 2d+qd+q^2d)$.  But $2d+qd+q^2d=d + (1+q+q^2)d$ where $q+q+q^2=0$.  Thus $2d+qd+q^2d=d$, which makes triangle presentation element $(d+qd,\ 2d+qd, \ 2d+qd+q^2d)=(d+qd,\ 2d+qd,\ d)$.  And this shows us that $\alpha(d)$ contains $d+qd$.  
    Lastly, $(qd, \ d+qd, \ d+qd+q^2d)$ is also a triangle presentation element that is associated with $\langle d,q^2d,qd \rangle$.  Again because $d+qd+q^2d=0$, triangle presentation element $(qd, \ d+qd, \ d+qd+q^2d) = (qd, \ d+qd, 0)$, which means that $\alpha(d+qd)$ contains 0. 

    We can of course reverse this process to show that the triangle presentation elements associated with $\langle d,q^2d,qd \rangle$ enumerate the same elements on a line as those associated with $\langle d,qd,q^2d \rangle$. 

    Thus we see that for any \PDS\ fixed by multiplier $q$ or $q^2$, there are $2^t$ corresponding distinct triangle presentations because every order-3 orbit under $q$ of $D$ can be represented as either $\langle d,qd,q^2d \rangle$ or $\langle d,q^2d,qd \rangle$.  
\end{proof}

The above proposition means that we can ``mix and match" triples $\langle d,qd,q^2d \rangle$ and $\langle d,q^2d,qd \rangle$ within \PDS.  For example, the \PDS\ $\{1,2,4\}$ of order $2$ fixed by the multiplier $2$ corresponds to two distinct triangle presentations $\langle 1,2,4 \rangle$ and $\langle 1,4,2 \rangle$ but both are compatible with the same bijective map $\alpha$.  In fact, we can now see that each \PDS\ of order $q$ and fixed by multiplier $q$ corresponds to exactly two triangle presentations for orders 2, 3, and 4.  (See Section \ref{sec:examples}.)  

But once we get to $q=5$, there are four such triangle presentations:  Consider the \PDS\ $\{1,5,17,22,23,25\}$ of order 5 that is fixed by the multiplier 5.  This \PDS\ decomposes into two distinct orbits of order 3 where each orbit can be represented by $\langle d,qd,q^2d \rangle$ or $\langle d,q^2d,qd \rangle$.  The first distinct orbit can be represented by $\langle 1,5,25 \rangle$ or $\langle 1,25,5 \rangle$, and the second distinct orbit can be represented by $\langle 17,23,22 \rangle$ or $\langle 17,22,23 \rangle$.  Therefore the four triangle presentations compatible with $\alpha(0) = \{1,5,17,22,23,25\}$ are $\{\langle 1,5,25 \rangle, \langle 17,23,22 \rangle\}$, $\{\langle 1,5,25 \rangle, \langle 17,22,23 \rangle\}$, $\{\langle 1,25,5 \rangle, \langle  17,23,22\rangle\}$, and $\{\langle 1,25,5 \rangle, \langle 17,22,23 \rangle\}$. 
When mixing ``ordered orbits" fixed by $q$ and $q^2$, one gets a group $\Gamma$ whose Cayley graph is the 1-skeleton of an exotic building as found by Alex Lou\'e \cite{loue2024infinite} using the ideas presented in this paper.

\vspace{5mm}

\subsection{Equivalent Triangle Presentations}\label{sec: equiv tri pres}
\hfill\\
\par In this section, we show that all \PDS s of the same order encode equivalent triangle presentations as defined by \cite{cartwright1993groupsb}.  We begin by noting that shifts of a \PDS\ of order $q$ correspond to collineations of Desarguesian projective planes of order $q$ and automorphisms of a \PDS\ of order $q$ correspond to correlations of a projective plane of order $q$.

The following is a restatement of a direct consequence of Theorem 2 in \cite{halberstam1964perfect} as mentioned at the beginning of \S 4 in \cite{halberstam1964perfect}. 

\begin{thm} \label{lem: col or cor}
    Given two \PDS s of order $q$, we can transform one to the other via a series of shifts or automorphisms.
\end{thm}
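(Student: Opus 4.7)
The plan is to leverage the Halberstam–Laxton construction of PDSs inside $\F_{q^3}$. Every PDS of order $q$ in $\Z/v\Z$, where $v = q^2+q+1$, arises as
\[
D_{\zeta,V} = \{ i \in \Z/v\Z : \zeta^i \in V \}
\]
for some generator $\zeta$ of $\F_{q^3}^\times / \F_q^\times$ and some two-dimensional $\F_q$-subspace $V \subset \F_{q^3}$; the canonical choice from Halberstam–Laxton is $V = \F_q\cdot 1 + \F_q\cdot \zeta$. Proving the statement reduces to matching shifts with the freedom to vary $V$ (keeping $\zeta$ fixed), and matching unit multiplications with the freedom to vary $\zeta$ (keeping $V$ fixed).

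For the shift half, a direct calculation gives
\[
D_{\zeta,\zeta^j V} = \{i : \zeta^{i-j} \in V\} = j + D_{\zeta,V}.
\]
Since $\F_{q^3}^\times / \F_q^\times \cong \Z/v\Z$ acts freely and transitively on the set of two-dimensional $\F_q$-subspaces of $\F_{q^3}$ (dually to its well-known sharply transitive action on one-dimensional subspaces, i.e.\ on the points of $PG(2,q)$), every shift of $D_{\zeta,V}$ is realized as $D_{\zeta,V'}$ for a unique $V'$. For the multiplication half, replacing $\zeta$ by another generator $\zeta^u$ with $\gcd(u,v)=1$ yields
\[
D_{\zeta^u,V} = \{ i : \zeta^{ui} \in V \} = u^{-1} D_{\zeta,V};
\]
as $u$ ranges over the units of $\Z/v\Z$, every unit multiplication is realized.

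Given two PDSs $D_1, D_2$ of order $q$, the Halberstam–Laxton classification writes them as $D_1 = D_{\zeta,V_1}$ and $D_2 = D_{\zeta^u,V_2}$. Multiplying $D_1$ by $u^{-1}$ switches $\zeta$ for $\zeta^u$; an appropriate shift then switches $V_1$ for $V_2$. The composition transforms $D_1$ into $D_2$, which is exactly the claim.

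The main obstacle is the Halberstam–Laxton classification itself—the statement that every PDS in $\Z/v\Z$ of order $q$ arises from the subspace construction above. This is the nontrivial content of Theorem~2 in \cite{halberstam1964perfect}; once it is in hand, the shift/unit-multiplication dictionary is routine. A more self-contained alternative would work directly inside $PG(2,q)$ using the structure of the normalizer of a Singer cycle in $P\Gamma L(3,q)$, but the Halberstam–Laxton route is cleaner and aligns with the framework the paper already sets up.
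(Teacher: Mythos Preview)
Your proposal is correct and aligns with the paper's approach: the paper does not give its own proof but simply records the statement as a direct consequence of Theorem~2 in \cite{halberstam1964perfect}. You go slightly further by spelling out the dictionary---shifts correspond to changing the subspace $V$, unit multiplications to changing the generator $\zeta$---but you correctly identify the Halberstam--Laxton classification as the nontrivial input, exactly as the paper does.
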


\begin{thm} \label{thm: PDS collineation or correlation}
    The projective planes described by any two \PDS s of order $q$ that are also fixed by multiplier $q$ can be transformed to one another via a collineation or correlation.
\end{thm}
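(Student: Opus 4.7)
\emph{Proof plan.}

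The strategy is to apply Theorem \ref{lem: col or cor} directly and track what it says on the level of projective planes. Each \PDS\ $D$ of order $q$ equips $P = \Z/v\Z$ with a projective plane structure whose lines are $\mathcal{L}_D = \{\, j + D : j \in \Z/v\Z \,\}$, with incidence given by set membership. Theorem \ref{lem: col or cor} guarantees that $D_2$ can be obtained from $D_1$ by some finite sequence of shifts $y \mapsto y + c$ and unit multiplications $y \mapsto uy$ on $\Z/v\Z$. Since both types of operations are affine maps and the affine group of $\Z/v\Z$ is closed under composition, the whole sequence collapses into a single affine transformation; that is, there exist $u \in (\Z/v\Z)^{\times}$ and $x \in \Z/v\Z$ with $D_2 = u D_1 + x$.

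I would then define $\phi: \Z/v\Z \to \Z/v\Z$ by $\phi(i) = ui + x$ and verify that $\phi$ is the desired collineation. It is clearly a bijection of the point set, and for any line $j + D_1 \in \mathcal{L}_{D_1}$,
\[
\phi(j + D_1) \;=\; uj + x + uD_1 \;=\; uj + (x + u D_1) \;=\; uj + D_2 \;\in\; \mathcal{L}_{D_2}.
\]
Thus $\phi$ maps $\mathcal{L}_{D_1}$ bijectively onto $\mathcal{L}_{D_2}$ and preserves the incidence relation (set membership), so it is a collineation between the two planes. The hypothesis that $D_1, D_2$ are both fixed by the multiplier $q$ is not directly needed to produce $\phi$; it is carried as part of the data because it is essential for the downstream use of this theorem in Section \ref{sec: equiv tri pres}, where it controls the interaction of $\phi$ with the triangle-presentation construction.

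The phrasing ``collineation or correlation'' in the statement is retained for flexibility: the construction above always yields a collineation, but the broader option is useful in the next section, where Lemma \ref{lem: collineation and correlation} applies to both types of maps to produce equivalent triangle presentations. The main conceptual obstacle is simply to recognize that Theorem \ref{lem: col or cor} already does the heavy lifting and that the translation from ``shift/automorphism of a \PDS'' to ``collineation of the associated projective plane'' is an affine computation on $\Z/v\Z$; once that translation is pinned down, the verification above is routine.
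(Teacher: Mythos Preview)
Your argument is correct and in fact proves a slightly stronger statement: any two \PDS s of order $q$ (fixed by $q$ or not) yield projective planes related by a \emph{collineation}, namely the affine map $\phi(i)=ui+x$ once Theorem~\ref{lem: col or cor} hands you $D_2=uD_1+x$. The verification that $\phi$ carries lines $j+D_1$ to lines $uj+D_2$ is exactly right, and incidence is preserved because $\phi$ is a bijection of the common point set $\Z/v\Z$.

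The paper argues differently. It separates into two cases according to whether the two \PDS s generate the same family of lines (i.e., are shifts of one another) or not. In the first case it records the obvious shift collineation, invoking Theorems~\ref{thm: gcd(m-1,v)} and~\ref{thm: 1 or 3 shifts fixed} to pin down how many such shifts there are when both sets are fixed by~$q$. In the second case, where $D'=cD$ for some unit $c$, the paper does \emph{not} use your collineation $i\mapsto ci$ but instead builds an explicit \emph{correlation} $\mathscr{C}(p)=cD-cp$, $\mathscr{C}(D+k)=-ck$ and verifies the incidence-reversal directly. Your route is shorter and shows that ``or correlation'' is never strictly needed for the theorem as stated. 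What the paper's correlation buys is visible in the remark immediately following the theorem: feeding $\mathscr{C}$ into Lemma~\ref{lem: collineation and correlation}(2) produces $\mathscr{C}\alpha(\mathscr{T}^{\text{rev}})$, which flips between the $m=q$ and $m=q^2$ triangle presentations---an effect the collineation in part~(1) does not give on its own. So the two approaches prove the same theorem, but the paper's choice is tailored to that downstream observation.
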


\begin{proof} 
    $ $\newline
    \underline{\textit{Case 1}}: If both \PDS s generate the same set of lines, there exists a collineation between them.
    
    \par Assume there are two \PDS s, say $D$ and $D'$, of order $q$ that are also fixed by multiplier $q$ and have the same set of lines.  Let $\alpha(0)=D$ and $\alpha'(0)=D'$, where $\alpha$ and $\alpha'$ are compatible with the corresponding triangle presentations associated with $D$ and $D'$, respectively.  
    Per Theorem \ref{thm: 1 or 3 shifts fixed}, we know that there exist perfect difference sets for all orders $q$ that are fixed by the multiplier 
    $q$. And by Theorem \ref{thm: gcd(m-1,v)}, for a \PDS\ fixed by multiplier $q$, there are $gcd(q-1,v)$ shifts of that \PDS\ that are also fixed by $q$.  Using the division algorithm, we note that $v=q^2+q+1=(q+2)(q-1)+3$, which next means that we need to consider the $gcd(q-1,3)$.  Thus, if $q=0$ or $2$ (mod $3$), there is exactly one shift of a \PDS\ fixed by $q$.  
    
    Say $q=1$ (mod 3).  Let $D$ be a \PDS\ fixed by $q$.  Then there are two more shifts of $D$ that are also fixed by $q$.  By Lemma \ref{thm: gcd(m-1,v)}, these shifts are $D+1(v/g)$ and $D+2(v/g)$. Under the $\alpha$ map, we have  $\alpha(j)=j+D$.  But we also have $\alpha(j)=j+ D+1(v/g)$ and $\alpha(j)=j+ D+2(v/g)$, which are just collineations that send line $j+D$ to lines $j+ D+1(v/g)$ and $j+ D+2(v/g)$, respectively.  Note then that $D'$ is just a shift of $D$.\\
    \\
    \underline{\textit{Case 2}}: If both \PDS s generate a different set of lines, there exists a correlation between them.
    
    \par Assume that the two \PDS s of order $q$ that are also fixed by multiplier $q$ do not have the same set of lines.  Then by \cite{halberstam1964perfect}, the second \PDS\ must be a nontrivial automorphism of the first \PDS.  That is, the second \PDS\ must be a multiple, say m, of of the first \PDS\ where $gcd(v,m)=1$.  We claim that this automorphism corresponds to a correlation of a projective plane. 
    

    Let $D$ be a \PDS\ fixed by multiplier $q$.  Then the lines of the corresponding projective plane, say $\pi$, are $j+D$, $j=0,1,...,v-1$.  Let $c$ be a number relatively prime to $v$.  Then the lines of of the projective plane, say $\pi'$, corresponding to \PDS\ $cD$ are $j+cD$.  Let $P_\pi$ and $L_\pi$ refer to the points and lines, respectively, of $\pi$, and let $P_{\pi'}$ and $L_{\pi'}$ refer to the points and lines, respectively, of $\pi'$.  
    
    For $p\in P_\pi$, define a map $\mathscr{C}: (P_\pi\cup L_\pi)\to (P_{\pi'}\cup L_{\pi'})$ by 
    $\mathscr{C}(p)=cD -cp$ and $\mathscr{C}(D+p)=-cp$.   Note that $\mathscr{C}$ is a bijection that takes points to lines and lines to points.  Say point $p$ is on line $D+k$ with respect to projective plane $\pi$.  Then we have the following: 
        \begin{tabbing}
            \hspace*{10mm} $p\in (D+k) $ \= $\iff (p-k)=c^{-1}(cp-ck)\in D $ \\
                \> $\iff (cp-ck)\in cD $ \\
                \> $\iff -ck\in (cD-cp)$ \\
                \> $\iff \mathscr{C}(D+k)\in \mathscr{C}(p)$
        \end{tabbing}
    Because $\mathscr{C}$ is a bijection, we have the reverse direction as well. 
    Therefore, $\mathscr{C}$ is the correlation of a projective plane that corresponds to the automorphism ${c: \Z/v\Z\to \Z/v\Z}$ where $c(z)=cz$ for $z\in \Z/v\Z$.     
\end{proof}

Note that when applying Lemma \ref{lem: collineation and correlation} part (2) to obtain the new triangle presentation $\mathscr{C}\alpha(\mathscr{T}^{\text{rev}})$, sometimes it yields the triangle presentation in Theorem \ref{thm:main thm} with $m=q$ and sometimes it yields the triangle presentation with $m=q^2$.

\begin{rmk} 
    All triangle presentations as constructed in Theorem \ref{thm:main thm} for a given $q$ embed as an arithmetic subgroup of $\PGL\big(3,\F_q\llrrparen{t}\big)$.
    By Theorem \ref{thm: PDS collineation or correlation}, we see that all the triangle presentations as constructed in Theorem \ref{thm:main thm} for a given $q$ are equivalent.  The construction of the triangle presentations, say $\mathscr{T}'$, in \S 4 of \cite{cartwright1993groups} uses the \PDS\ $D=\{d\in \F_{q^3/q}: \Tr(\zeta^d)=0\}$ where $\zeta$ is a primitive element in $\F_{q^3/q}$. They show that the abstract group, say $\Gamma_0$, with these triangle presentation relations embeds as an arithmetic subgroup of $\PGL\big(3,\F_q\llrrparen{t}\big)$. This means that all of the abstract groups associated with the triangle presentations in Theorem \ref{thm:main thm} also embed as arithmetic subgroups of $\PGL\big(3,\F_q\llrrparen{t}\big)$ because they are isomorphic to $\Gamma_0$ by Proposition \ref{prop:equiv tri pres means isom gp}.  
\end{rmk}

\vspace*{0mm}

\hfill\\

\subsection{Extension of Groups with Triangle Presentation Relations} \label{subsection:extension} \hfill\\

We now look at extending $\Gamma_\mathscr{T}$ by automorphisms of $\Gamma_\mathscr{T}$ that are induced by automorphisms of $\mathscr{T}$.  

Article \cite{cartwright1993groupsb} defines $\tilde \Gamma_{\mathscr{T}}$ as the extension of $\Gamma_{\mathscr{T}}$ by a subgroup of Aut$(\mathscr{T})$, where Aut$(\mathscr{T})=\{h\in \text{ collineations} : h(\mathscr{T})=\mathscr{T}\}$.  In our case, all the projective planes are Desarguesian, so the collineation group of points is known to be $P\Gamma L(3,\F_q)$.  Moreover, Theorem 4.1 of \cite{cartwright1993groups} tells us that this extension also embeds as an arithmetic subgroup of $\PGL\big(3,\F_q\llrrparen{t}\big)$.

This subgroup is generated by the two collineations of a projective plane described below.

\begin{itemize}
    \item[1.]  The first group of collineations is generated by a permutation $p$ associated to the $\alpha$ map that corresponds to the triangle presentation.  That is, $\alpha(i)=L_{p(i)}$ for $L_{p(i)}$ the $p(i)^{\text{th}}$-labeled line.  With respect to our triangle presentations, $p$ can always be defined as $p(i)=i+1$, whereby the order of $p$ is $v$.  
    \item[2.] The second group of collineation is generated by the Frobenius automorphism $s$ of the field extension $\F_{q^3}/\F_q$ that, in additive notation, sends point $i$ to $qi$.  Then the order of $s$ is always 3.
\end{itemize}


It is particularly east to see the action of $p$ and $s$ on $\mathscr{T}$ when $\mathscr{T}$ is described by \PDS s.
Recall by Theorem \ref{thm:main thm} that for $d_i$ in a \PDS, $\langle d_j, qd_j, q^2d_j \rangle = \{(i, i+d_j, i+d_j+qd_j):  i\in P\}\subseteq \mathscr{T}$. Thus we see that permutation $p\in \text{Aut}(\mathscr{T})$.  
Moreover, for $(i, i+d_j, i+d_j+qd_j)\in \langle d_j, qd_j, q^2d_j \rangle$, $q(i,\ i+d_j, \ i+d_j+qd_j)$ is such that the cyclic differences are $qd_j$, $q^2d_j$, and $d_j$.  This means that $q(i,\ i+d_j, \ i+d_j+qd_j)\in \langle qd_j, q^2d_j, d_j \rangle = \langle d_j, qd_j, q^2d_j \rangle$.  Similarly, $\langle q^2d_j, d_j, qd_j \rangle = \langle d_j, qd_j, q^2d_j \rangle$.  Consequently, $s\in \text{Aut}(\mathscr{T})$.  

We can see that the presentation of $\langle p,s \rangle$ is $\langle s,p : s^3=p^v=1, sps^{-1}=p^q \rangle$ and that its order is $3v$.  
Thus, $\tilde \Gamma_{\mathscr{T}} = \Gamma_{\mathscr{T}} \rtimes \langle p,s \rangle = \langle a_0, ..., a_{v-1}, s, p : a_ia_ja_k=1 \text{ for all } (i,j,k)\in \mathscr{T}, \ s^3=p^v=1, \ sps^{-1}=p^q, \ pa_ip^{-1}=a_{p(i)}, \ sa_is^{-1}=a_{s(i)}\rangle$.

\hfill\\

\section{Connection with Panel-Regular Lattices} \label{subsection:extension} 

We now connect our automorphism groups to the type-preserving automorphism groups of Essert \cite{essert2013geometric} and Witzel \cite{witzel2017panel}.  Their groups 
act simply transitively (i.e., regularly) on each type of \textit{panel} of the building.  These groups are known as \textit{panel-regular lattices}.   Essert and Witzel also made use of Singer's construction of projective planes and connected this construction to \PDS s.   
We first give an overview of panel-regular lattices. 
Then we connect specific panel-regular lattices to our $\Gamma_\mathscr{T}$ via a common subgroup.  Lastly, we translate this subgroup using the nomenclature of triangle presentations, which creates a uniform understanding of the panel-regular groups and $\Gamma_{\mathscr{T}}$. \\

\subsection{Overview of Panel-Regular Lattices} \hfill\\

We learn in \cite{capdeboscq2015cocompact} that the subgroup of permutations, $\langle p \rangle$, in Aut$(\mathscr{T})$ is a special subgroup of $\PGL (3,q)$ called a Singer group.

\begin{defn} (\S 2, \cite{witzel2017panel})
        A \textit{Singer group} of $\PGL (3,q)$ is a subgroup of $\PGL (3,q)$ that acts simply transitively on the set of points (and lines) in a projective plane.  When the Singer group is cyclic, the generator of the Singer group is called a \textit{Singer cycle}.  
        
\end{defn}

We next concern ourselves with two special types of lattices and one more group:

\begin{defn} (\S 1, \cite{witzel2017panel})
    A \textit{Singer lattice} is a lattice that preserves types and acts simply transitively on the three sets of panels (i.e., edges) of a given type of an $\tilde A_2$ building.  A Singer lattice is \textit{cyclic} if every vertex stabilizer is cyclic.
\end{defn}


\begin{defn} (\S 4, \cite{capdeboscq2015cocompact})
    For vertices $v_0$, $v_1$, and $v_2$ of a standard maximal simplex of an $\tilde A_2$ building, let $S_i$ be the stabilizer of $v_i$ in $\Gamma_{\mathscr{T}} \rtimes \langle p \rangle$. 
    Define $\Gamma''$ to be the group generated by the three $S_i$'s: $\langle S_0, S_1, S_2 \rangle$. Note that each $S_i$ acts simply transitively on the set of neighboring vertices of a given type and that $\Gamma''$ is a subgroup of $\Gamma_{\mathscr{T}} \rtimes \langle p \rangle$.  Moreover, $\Gamma''$ is a Singer lattice.
\end{defn}

The first main theorem in \cite{capdeboscq2015cocompact} is as follows:
\begin{thm} \label{thm:gamma''}
    Let $\Delta$ be a building associated to $\PGL \big(3,\F_q\llrrparen{t}\big)$.      
    Then $\PGL \big(3,\F_q\llrrparen{t}\big)$ admits $\Gamma''$ as a cocompact lattice such that
        \begin{itemize}
            \item the action of $\Gamma''$ is type-preserving and transitive (not necessarily free) on each vertex type; and
            \item the stabilizer of each vertex in $\Gamma''$ is isomorphic to a cyclic Singer group in $\PGL(3,q)$ (whereby it follows that $\Gamma''$ acts simply transitively on the set of panels of each type in $\Delta$).
        \end{itemize}
\end{thm}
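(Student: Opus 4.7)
The plan is to leverage the fact, recalled in Section \ref{subsection:extension}, that $\Gamma_{\mathscr{T}} \rtimes \langle p \rangle$ already embeds as a cocompact lattice in $\PGL\big(3,\F_q\llrrparen{t}\big)$. Since $\Gamma''$ is defined as a subgroup of this ambient lattice, discreteness in $\PGL\big(3,\F_q\llrrparen{t}\big)$ is automatic; the real work is to describe the stabilizers $S_i$ explicitly, to establish type-preservation and global transitivity on each vertex type, and then to deduce cocompactness and panel-regularity.

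First I would pin down the stabilizer $S_0$ of $v_0$ inside $\Gamma_{\mathscr{T}} \rtimes \langle p \rangle$. Because $\Gamma_{\mathscr{T}}$ acts simply transitively on vertices and the automorphism $p$ (which conjugates $a_i \mapsto a_{i+1}$) fixes the identity vertex $v_0$, any $\gamma p^k$ fixing $v_0$ must satisfy $\gamma = 1$. Hence $S_0 = \langle p \rangle$, cyclic of order $v = q^2+q+1$. For $i = 1, 2$, choose $\gamma_i \in \Gamma_{\mathscr{T}}$ with $\gamma_i \cdot v_0 = v_i$; then $S_i = \gamma_i \langle p \rangle \gamma_i^{-1}$, again cyclic of order $v$. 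To identify each $S_i$ with a cyclic Singer group in $\PGL(3,q)$, I would invoke that the action of $p$ on the link of $v_0$ --- the incidence graph of the projective plane coordinatized by $\Z/v\Z$ via shifts of $D$ --- is precisely the shift $i \mapsto i+1$ on points and $j+D \mapsto (j+1)+D$ on lines, which is the classical Singer cycle.

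Next I would check type-preservation and transitivity. Since $p$ fixes $v_0$ and permutes its point-type and line-type neighbors within their respective types, $\langle p \rangle$ is type-preserving; each $S_i$ is a conjugate of $\langle p \rangle$ by the type-rotating element $\gamma_i$, and the two type rotations cancel, so every generator of $\Gamma''$ preserves types. Global transitivity on vertices of each type is then obtained by an inductive gallery argument: any vertex $w$ of type $t$ is connected to $v_t$ by a combinatorial path in the $1$-skeleton, and at each step the appropriate $S_i$ contains an element (unique, by simple transitivity on neighbors of $v_i$) realizing that step, so the composition lies in $\Gamma''$.

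The main obstacle will be controlling the stabilizer of $v_i$ inside $\Gamma''$ itself, rather than inside the ambient $\Gamma_{\mathscr{T}} \rtimes \langle p \rangle$. Once one has that the $\Gamma''$-stabilizer of $v_i$ equals $S_i$ exactly, panel-regularity falls out cleanly: given a panel of type $\{i,j\}$, transitivity of $\Gamma''$ on type-$i$ vertices combined with the simply transitive action of $S_i$ on type-$j$ neighbors yields transitivity on panels of type $\{i,j\}$, and a quick order count (the setwise stabilizer of a panel is trivial in $S_i \cap S_j$) shows the action is free. To pin down the stabilizer, I would argue that the stabilizer of $v_i$ in $\Gamma_{\mathscr{T}} \rtimes \langle p \rangle$ is exactly $S_i$ by the computation above, so intersecting with the subgroup $\Gamma''$ can only shrink it, while $S_i \subseteq \Gamma''$ forces equality. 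Cocompactness is then immediate: a single closed maximal simplex, with its faces identified under the $S_i$, furnishes a compact fundamental domain, and the lattice property follows since a cocompact discrete subgroup of a locally compact group has finite covolume.
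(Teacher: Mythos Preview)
The paper does not supply its own proof of this statement: it is introduced as ``The first main theorem in \cite{capdeboscq2015cocompact} is as follows,'' and no argument is given afterward. There is therefore nothing in the paper to compare your proposal against directly.

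Your sketch, however, is a coherent and essentially correct self-contained argument built from ingredients the paper assembles around this citation. The identification $S_0 = \langle p \rangle$ and $S_i = \gamma_i \langle p \rangle \gamma_i^{-1}$ is exactly what the paper later records in Observation~\ref{obs:}; the type-preservation check (the type rotation of $\gamma_i$ cancels against that of $\gamma_i^{-1}$ around the type-preserving $p$) is right; and your inductive path argument for transitivity on each vertex type works because $S_i$, acting as a Singer cycle on the link of $v_i$, is transitive on neighbors of each fixed type (your phrase ``simple transitivity on neighbors of $v_i$'' is slightly loose---it is simple transitivity on each \emph{type} of neighbor---but that is all the induction needs). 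The stabilizer sandwich $S_i \subseteq \mathrm{Stab}_{\Gamma''}(v_i) \subseteq \mathrm{Stab}_{\Gamma_{\mathscr{T}} \rtimes \langle p \rangle}(v_i) = S_i$ is clean, and panel-regularity and cocompactness follow as you indicate. If one wished to include an argument here rather than defer to \cite{capdeboscq2015cocompact}, your outline would serve.
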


\begin{defn}
    If $\sigma_0$, $\sigma_1$, and $\sigma_2$ are the generators of the vertex stabilizers of a chamber, triple $(\sigma_0, \sigma_1, \sigma_2)$ is called a \textit{chamber triple}.   
\end{defn}

For us, the Singer cycle is $\langle p \rangle$, which makes the vertex stabilizers cyclic.  Thus we see that $\Gamma''$ is a Singer cyclic lattice.  Let $\sigma_i$ be the generator of $S_i$ for $0\leq i\leq 2$.  Then $\Gamma''=\langle\sigma_0, \sigma_1, \sigma_2\rangle$.  The relations of $\Gamma''$ are as follows (Lemma 3.2 and Theorem 3.4, \cite{witzel2017panel}):
    \begin{itemize}
        \item $\sigma_0^{e_{i,0}} \sigma_1^{e_{i,1}} \sigma_2^{e_{i,2}} = 1$ for 
            $1\leq i\leq q+1$ where $(e_{i,j})$ is a $(q+1)\times 3$ matrix that contains the data for the exponents.  
        \item $\sigma_j^v=1$ for $1\leq j\leq 2$.  (Recall that $v=q^2+q+1$.)
    \end{itemize}

Section 5 of \cite{witzel2017panel} proves that there exists a unique Singer cyclic lattice that is an arithmetic subgroup of $\PGL\big(3,\F_q\llrrparen{t}\big)$ that acts on the same $\tilde A_2$ building as our $\Gamma_{\mathscr{T}}$ for a given $q$.  Call this unique lattice $\Gamma''_{\mathscr{A}}$.

A restatement of Corollary 5.2 in \cite{witzel2017panel} tells us exactly which difference matrix yields this arithmetic lattice.

\begin{thm}
    The group $\Gamma''_{\mathscr{A}}$ consists of relations with matrix $(e_{i,j})$ where $e_{i,0}=e_{i,1}=e_{i,2}$ for all $1\leq i\leq q+1$. Moreover, the columns of $(e_{i,j})$ are a \PDS.
\end{thm}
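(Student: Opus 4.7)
The strategy is to realize $\Gamma''_\mathscr{A}$ explicitly inside $\Gamma_\mathscr{T}\rtimes\langle p\rangle$, where $\Gamma_\mathscr{T}$ is constructed from a \PDS\ $D$ fixed by multiplier $q$ via Theorem~\ref{thm:main thm}, and then read off its defining relations. I would begin by fixing the base chamber with vertices $v_0 = 1$, $v_1 = a_0$, and $v_2 = a_0 a_{d_1}$ for some $d_1\in D$ chosen so that $(0, d_1, d_1+qd_1)\in\mathscr{T}$. Taking $\sigma_0 := p$ as a generator of $S_0 = \langle p\rangle$, a direct calculation using the conjugation relation $pa_ip^{-1}=a_{i+1}$ shows that generators of $S_1$ and $S_2$ can be written as $\sigma_1 = a_0 a_1^{-1} p$ and $\sigma_2 = a_0 a_{d_1} a_{d_1+1}^{-1} a_1^{-1} p$, with an inductive verification that $\sigma_i^k$ has a clean closed form and $\sigma_i^v = 1$ for each $i$, as required by Theorem~\ref{thm:gamma''}.

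Next, I would exploit the Frobenius symmetry of $\mathscr{T}$. Although $s$ does not lie in $\Gamma_\mathscr{T}\rtimes\langle p\rangle$, it normalizes $\Gamma''_\mathscr{A}$ inside $\tilde\Gamma_\mathscr{T}$ and acts on the base chamber by cyclically permuting the vertex stabilizers, hence the generating triple $(\sigma_0,\sigma_1,\sigma_2)$, up to an inner automorphism. Consequently the set of defining relations of $\Gamma''_\mathscr{A}$ is closed under cyclic shift of exponents. Combined with the fact (Theorem~\ref{thm:gamma''} and Witzel's uniqueness) that there is one relation per chamber orbit under the Singer action, this forces every relation into the symmetric form $\sigma_0^e\sigma_1^e\sigma_2^e = 1$, establishing the equal-columns assertion $e_{i,0} = e_{i,1} = e_{i,2}$.

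To identify the set of allowed exponents $\{e_1,\ldots,e_{q+1}\}$, I would compute $\sigma_0^e\sigma_1^e\sigma_2^e$ explicitly using the closed forms for $\sigma_i^e$ and collapse the resulting word in $\Gamma_\mathscr{T}\rtimes\langle p\rangle$ using the triangle presentation relations $a_j a_{j+d} a_{j+d+qd} = 1$ (valid for each $d\in D$) together with the multiplier identity $qD = D$ and the cyclic identity $1+q+q^2\equiv 0\pmod v$ from Lemma~\ref{lem:orbits}. The expected outcome is that the word reduces to the identity precisely when $e$ ranges over a translate of $D$; thus the (common) column of $(e_{i,j})$ equals $D$ up to translation, which is a \PDS\ by construction.

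The main obstacle is the rigorous derivation of equal columns from the Frobenius symmetry. Because $s\in\tilde\Gamma_\mathscr{T}\setminus(\Gamma_\mathscr{T}\rtimes\langle p\rangle)$, the cyclic permutation of $(\sigma_0,\sigma_1,\sigma_2)$ is realized only as an outer automorphism of $\Gamma''_\mathscr{A}$, so one must argue carefully that this outer symmetry actually survives as a symmetry of the specific relations rather than merely permuting them as an unstructured set. The most delicate part of the computation is the cancellation inside $\Gamma_\mathscr{T}$: one must track which cyclic permutations of triples $(j, j+d, j+d+qd) \in \mathscr{T}$ appear in the expansion of $\sigma_0^e\sigma_1^e\sigma_2^e$, and verify that they match up exactly for $e\in D$ and fail otherwise.
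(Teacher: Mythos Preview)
The paper does not supply its own proof of this statement: it is explicitly introduced as ``a restatement of Corollary~5.2 in \cite{witzel2017panel}'' and is used as a black box. So there is no ``paper's own proof'' to compare against; Witzel's original argument proceeds via the explicit arithmetic realization of the Singer cyclic lattice inside $\PGL\big(3,\F_q\llrrparen{t}\big)$, not via the triangle-presentation formalism you are using.

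That said, your proposed route has a genuine gap at the step where you deduce equal columns from the Frobenius symmetry. Even granting that conjugation by $s$ cyclically permutes $(\sigma_0,\sigma_1,\sigma_2)$ up to an inner automorphism, what this gives is that the \emph{set} of exponent triples $\{(e_{i,0},e_{i,1},e_{i,2}):1\le i\le q+1\}$ is invariant under cyclic shift of coordinates. It does \emph{not} follow that each individual triple is a fixed point of that shift, i.e.\ that $e_{i,0}=e_{i,1}=e_{i,2}$. A set of $q+1$ triples can perfectly well be cyclically invariant while containing no constant triple at all (for instance when $3\mid q+1$ and the triples fall into size-$3$ orbits). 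Your appeal to ``one relation per chamber orbit under the Singer action'' does not close this, since that count is exactly $q+1$ and says nothing about how $\Z/3\Z$ partitions those relations. To make a symmetry argument work you would need an additional rigidity statement pinning down each relation individually, which is essentially what Witzel's arithmetic computation provides. The subsequent computation identifying the common column with a translate of $D$ is plausible, but it rests on the equal-columns conclusion and so inherits the gap.
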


\hfill\\

\subsection{Subgroup common to $\Gamma_\mathscr{T}$ and $\Gamma''_{\mathscr{A}}$} \hfill\\

We are now ready to introduce the subgroup that is common to both $\Gamma_{\mathscr{T}}$ and $\Gamma''_{\mathscr{A}}$ and translate it into triangle presentation language. 

The beginning of section 7 in \cite{witzel2017panel} tells us that if each $\sigma_i$ maps non-trivially to the abelianization of $\Gamma''_{\mathscr{A}}$, then there exists a homomorphism $\phi: \Gamma''_{\mathscr{A}} \to \Z/v\Z$ such that $\sigma_i\mapsto 1$ for $0\leq i\leq 2$.  Thus, $\phi(g)$ is the word length with respect to $\{\sigma_0, \sigma_1, \sigma_2\}$ mod $v$ of $g$ for every $g\in \Gamma''_{\mathscr{A}}$.  
It also states that the kernel of $\phi$ acts freely on the vertices of each type.  Because $\Gamma''_{\mathscr{A}}$ acts transitively on each vertex type, the stabilizer of each vertex type has order $v$, and the index of $\ker \phi$ in $\Gamma''_{\mathscr{A}}$ is also $v$, we see that $\ker \phi$ is simply transitive on each vertex type.


We are now in a position to create the poset of lattices as shown in Figure \ref{fig:poset}. Let the base be $\ker \phi$ from section 7 in \cite{witzel2017panel}. We can then extend $\ker \phi$ by a type-rotating automorphism, say $\langle r \rangle$, to recover $\Gamma_{\mathscr{T}}$, or we can extend $\ker \phi$ by the permutation automorphism (i.e., a Singer cycle) to get a Singer cyclic lattice.  Next, we can extend $\Gamma_{\mathscr{T}}$ by the permutation automorphism to get $\Gamma_{\mathscr{T}} \rtimes \langle p \rangle$, or we can extend the Singer lattice by $\langle r \rangle$ to get $\Gamma_{\mathscr{T}} \rtimes \langle p \rangle$.  Lastly, we can extend $\Gamma_{\mathscr{T}} \rtimes \langle p \rangle$ by the Frobenius automorphism $s$ to recover $\tilde \Gamma_{\mathscr{T}}$.

{
\begin{figure}[h!]
\begin{center}    
    \begin{tikzcd}[column sep=0em,row sep=2em]
        & \tilde \Gamma_{\mathscr{T}} = 
                (\Gamma_{\mathscr{T}} \rtimes \langle p \rangle) \rtimes \langle s\rangle 
           \\
        & \Gamma_{\mathscr{T}} \rtimes \langle p \rangle 
                \arrow[u, swap, "\text{extend by $\langle s \rangle$}"]\\
          \Gamma_{\mathscr{T}}
                \arrow[ur, "\text{extend by $\langle p \rangle$}"] 
                 && 
          \substack{\text{Singer lattice } \Gamma''_{\mathscr{A}} = \\ \ker \phi \rtimes \langle p \rangle 
            \footnotemark}
                \arrow[ul, swap, "\text{extend by $\langle r \rangle$}"] \\
        & \ker \phi 
                \arrow[ul,"\text{extend by $\langle r \rangle$}"] 
                \arrow[ur, swap, "\text{extend by $\langle p \rangle$}"] 
        
    \end{tikzcd}    
\end{center}
    \caption{Poset of Lattices}
    \label{fig:poset}
\end{figure}
}

This poset of lattices connects our lattices and the lattices in \cite{cartwright1993groups} and \cite{cartwright1993groupsb} to the Singer cyclic lattices in \cite{witzel2017panel} and \cite{essert2013geometric} and further shows how to restrict or extend one of these lattices to get to another.  In fact, we see that $\Gamma_{\mathscr{T}} = \ker \phi \rtimes \langle r \rangle$ and that $\Gamma_{\mathscr{T}}\rtimes \langle p \rangle = (\ker \phi \rtimes \langle p \rangle) \rtimes \langle r \rangle$.  This next part shows how to understand $\ker \phi$ via $\Gamma_{\mathscr{T}}$.  

If we consider an $\tilde A_2$ building colored by vertex types, we see that for a group's action to be type-preserving on vertices, each type must ``move via rhombuses" where the short diagonal of the rhombus is an edge with the same type as its opposite vertices.  We will make this more precise in the discussion to follow.  See the picture below, which is the restriction of the building to an apartment.  

\footnotetext{Observation 7.1 in \cite{witzel2017panel}}

\begin{figure}[h!]
\begin{center}    

\begin{tikzpicture}[scale=0.5]
    \coordinate (Origin)   at (0,0);
    \coordinate (XAxisMin) at (-3,0);
    \coordinate (XAxisMax) at (5,0);
    \coordinate (YAxisMin) at (0,-2);
    \coordinate (YAxisMax) at (0,5);
    
    \draw [thin,-latex, opacity=0 ] (XAxisMin) -- (XAxisMax) node[below] {};
    \draw [thin,-latex, opacity=0 ] (YAxisMin) -- (YAxisMax) node[above] {};
    
    \clip (-5.5,-5) rectangle (5.5,5); 

    \begin{scope}[y=(60:1)]

        \foreach \x  [count=\j from 2] in {-8,-6,...,8}
        {        
          \draw[]
            (\x,-8) -- (\x,8)
            (-8,\x) -- (8,\x) 
            [rotate=60] (\x,-8) -- (\x,8) ;
        }
            \draw[ultra thick, fill=gray!20] (0,0) -- (2,0) -- (2,2) -- (0,2) -- cycle;
            \draw (0,2) -- (2,0);

            \draw[ultra thick, fill=gray!20] (0,0) -- (0,2) -- (-2,4) -- (-2,2) -- cycle;
            \draw (0,2) -- (-2,2);

            \draw[ultra thick, fill=gray!20] (0,0) -- (-2,2) -- (-4,2) -- (-2,0) -- cycle;
            \draw (-2,2) -- (-2,0);

            \draw[ultra thick, fill=gray!20] (0,0) -- (-2,0) -- (-2,-2) -- (0,-2) -- cycle;
            \draw (-2,0) -- (0,0-2);

            \draw[ultra thick, fill=gray!20] (0,0) -- (0,-2) -- (2,-4) -- (2,-2) -- cycle;
            \draw (0,-2) -- (2,-2);
           
            \draw[ultra thick, fill=gray!20] (0,0) -- (2,-2) -- (4,-2) -- (2,0) -- cycle;
            \draw (2,-2) -- (2,0);

                \foreach \x [count=\j from 2] in {-8,-6,...,8}
                    \foreach \y [count=\j from 2] in {-8,-6,...,8}
                {
                    \pgfmathsetmacro\result{\MyColorList[mod(\y+2*\x,3) < 0 ? mod(\y+2*\x,3) + 3 : mod(\y+2*\x,3)]}
                  
                    \node[circle, fill=\result, inner sep=2pt] at (\x,\y) {};
                    \node[circle, fill=\result, inner sep=4pt] at (0,0) {};
                }
    \end{scope}

\end{tikzpicture}
\end{center}
    \caption{Vertices colored by type. Center large red vertex has 6 possible moves one rhombus away to be type preserving.}
    \label{fig:rhombus}
\end{figure}

We can determine the group action for these rhombuses using triangle presentations.  
Because the Cayley grapy of $\Gamma_{\mathscr{T}}$ is the 1-skeleton of the $\tilde A_2$ building, we know that such rhombuses consisting of two triangles sharing an edge exist.  Say we have such a pairing, then there exist triangle presentation elements $(i,j,k)$ and $(j,\ell, m)$ that correspond to these two triangles with the respective group generators as shown in Figure \ref{fig:rhombus}:

\begin{figure}[h!]
\begin{center}   
\tikzset{
  A/.style={regular polygon, regular polygon sides=3,fill=gray!20,minimum size=3cm, draw},
  B/.style={regular polygon, regular polygon sides=3,fill=gray!20,minimum size=3cm, draw},
}
\begin{tikzpicture}[node distance=0pt, every node/.style={outer sep=0pt}]
  \node (A) [A=0] {};
  \node (B) [B=0, anchor=corner 2, rotate=180] at (A.corner 1) {};

    \begin{scope}[every node/.style={sloped,allow upside down}]

        \draw [draw=black]
            (A.corner 3) -- node {\midarrow} (A.corner 2) -- node {\midarrow} 
                (A.corner 1) -- node {\midarrow} cycle;
        \draw (B.corner 2) -- node {\midarrow} (B.corner 3) -- node {\midarrow} 
                (B.corner 1);
    \end{scope}
    
      \draw[shift=(A.side 2)] node[below] {$a_i$};
      \draw[shift=(A.side 1)] node[below right] {$a_j$};
      \draw[shift=(A.side 3)] node[above right] {$a_k$};
      \draw[shift=(B.side 2)] node[above] {$a_\ell$};
      \draw[shift=(B.side 3)] node[below left] {$a_m$};
    
      \node[circle, fill=red, inner sep=2pt] at (A.corner 3) {};
      \node[circle, fill=cyan, inner sep=2pt] at (A.corner 1) {};
      \node[circle, fill=blue, inner sep=2pt] at (A.corner 2) {};
      \node[circle, fill=red, inner sep=2pt] at (B.corner 3) {};

\end{tikzpicture}
\end{center}
    \caption{}
    \label{fig:rhombus}
\end{figure}

We will define rhombus elements $b_{i,m}=a_ia_m^{-1}$ where we always start with a generator of $\Gamma_{\mathscr{T}}$ and end with the inverse of a generator.  
Say $v$ and $w$ are both vertices of the same type.  We know that there exists an apartment that contains both vertices.  Thus, there exists a path from $v$ to $w$ via concatenation of rhombuses sides of type $a_ia_m^{-1}$ that begin and end at vertices of the same type.  Label this path by $\prod_{i=1}^n b_{i_n, j_n}$.



\begin{defn} \label{def:rhombus pres} 
    Define $\Gamma_{\mathscr{R}(\mathscr{T})}$ to be the subgroup of $\Gamma_{\mathscr{T}}$ generated by all such $b_{i,m}$.  
\end{defn}

The following appears as a remark in Section 3 of \cite{cartwright1993groups}.  We take this occasion to record a proof in the literature.

\begin{lem} \label{lem:rhombus generators} 
    The group $\Gamma_{\mathscr{R}(\mathscr{T})}$, generated by $\{ b_{i,j}\}_{0\leq i<j \leq v-1}$ where $b_{i,j}= a_ia_j^{-1}$ in $\Gamma_{\mathscr{T}}$, corresponds to the type-preserving subgroup of $\Gamma_{\mathscr{T}}$.  
\end{lem}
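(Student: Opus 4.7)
The plan is to identify the type-preserving subgroup of $\Gamma_{\mathscr{T}}$ as the kernel of an explicit homomorphism to $\Z/3\Z$, and then to show that this kernel is generated by the $b_{i,j}$ using the geometry of the $\tilde A_2$ building on which $\Gamma_{\mathscr{T}}$ acts. Since every relator $a_i a_j a_k$ in the presentation of $\Gamma_{\mathscr{T}}$ has exponent sum $3 \equiv 0 \pmod{3}$, the rule $a_i \mapsto 1$ extends to a homomorphism $\tau \colon \Gamma_{\mathscr{T}} \to \Z/3\Z$. Fixing a base vertex $v_0$ of type $0$ and using that the Cayley graph of $\Gamma_{\mathscr{T}}$ is the $1$-skeleton of the building with each $a_i$-edge increasing vertex type by $1$, the type of $g \cdot v_0$ equals $\tau(g)$; hence the type-preserving subgroup $K$ is exactly $\ker \tau$, and the inclusion $\Gamma_{\mathscr{R}(\mathscr{T})} \subseteq K$ is immediate from $\tau(b_{i,j}) = \tau(a_i) - \tau(a_j) = 0$.

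For the reverse inclusion, since $\Gamma_{\mathscr{T}}$ acts simply transitively on the vertex set of the building, $K$ acts simply transitively on the set of type-$0$ vertices; it therefore suffices to show that $\Gamma_{\mathscr{R}(\mathscr{T})}$ already acts transitively on this set, for then simple transitivity of $K$ forces $\Gamma_{\mathscr{R}(\mathscr{T})} = K$. Given any type-$0$ vertex $w$, the building axioms supply an apartment $A$ containing both $v_0$ and $w$, and inside $A$ the type-$0$ vertices form a triangular sub-lattice in which $v_0$ and $w$ are joined by a sequence of nearest-neighbor moves; each such move is exactly a rhombus-move of the kind depicted in Figure \ref{fig:rhombus2}. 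For any pair of distinct indices $i, m$ the lines $\alpha(i)$ and $\alpha(m)$ meet in a unique point $j$, and condition (1) of Definition \ref{defn: tripres} supplies, up to cyclic permutation, triangle presentation elements $(i,j,k)$ and $(m,j,\ell)$; these assemble into a rhombus in $A$ sharing the edge labeled $a_j$, whose two type-$0$ corners are $v_0$ and $v_0 \cdot a_i a_m^{-1} = v_0 \cdot b_{i,m}$. Concatenating these moves along the sub-lattice path from $v_0$ to $w$ produces an element of $\Gamma_{\mathscr{R}(\mathscr{T})}$ sending $v_0$ to $w$, finishing the proof. As a byproduct, the equality $\Gamma_{\mathscr{R}(\mathscr{T})} = K$ implies that $\Gamma_{\mathscr{R}(\mathscr{T})}$ is normal in $\Gamma_{\mathscr{T}}$, something that is not at all obvious from its generating set.

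The main obstacle I anticipate is verifying that the rhombus picture is coherent at the group level, namely that the two possible paths from $v_0$ to an adjacent type-$0$ vertex $v_0'$ really yield the same element of $\Gamma_{\mathscr{T}}$: going through the type-$1$ intermediate vertex gives $a_i a_m^{-1}$, while going through the type-$2$ intermediate vertex gives $a_k^{-1} a_\ell$. The required identity $a_i a_m^{-1} = a_k^{-1} a_\ell$ follows from the two triangle relations $a_i a_j a_k = 1$ and $a_m a_j a_\ell = 1$, which together give $a_k a_i = a_j^{-1} = a_\ell a_m$; this short computation is what makes the translation from the geometric rhombus picture to the algebraic generating statement rigorous.
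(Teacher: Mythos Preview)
Your proof is correct and shares the paper's core geometric idea: any two same-type vertices lie in a common apartment, and within that apartment they are joined by a sequence of rhombus moves, each realised by some $b_{i,m}$. The paper's own proof is more local (it analyses the star of a single vertex $wa_i$ to see that the nearest same-type neighbours are reached by the moves $a_ia_j^{-1}$) and relies on the discussion preceding Definition~\ref{def:rhombus pres} for the connectivity step; it does not explicitly invoke the homomorphism to $\Z/3\Z$, which the paper introduces only afterwards as $\psi$.

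Your organisation is arguably cleaner: identifying the type-preserving subgroup as $\ker\tau$ up front, and then reducing the generation statement to transitivity on type-$0$ vertices via the simple-transitivity of $\ker\tau$, makes the logical structure transparent. One minor point of exposition: the sentence ``For any pair of distinct indices $i,m$ the lines $\alpha(i)$ and $\alpha(m)$ meet in a unique point $j$ \ldots'' builds a rhombus for \emph{given} $i,m$, whereas what the transitivity argument actually needs is the converse---that each rhombus move already present in $A$ has the form $b_{i,m}$. This is immediate once you note that the type-$1$ corner of such a rhombus is simultaneously $v_0 a_i$ and $v_0' a_m$ for some $i,m$, so $v_0' = v_0 a_i a_m^{-1}$; your third paragraph then correctly verifies the compatibility with the type-$2$ path.
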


\begin{proof}
     Let $w$ be a vertex of the $\tilde A_2$-building. Identify the 1-skeleton of the $\tilde A_2$-building with the Cayley graph of $\Gamma_{\mathscr{T}}$ so that $w$ may be considered an element of $\Gamma_{\mathscr{T}}$.  Then the link of $w$ is a bipartite graph of a projective plane where half of the the adjacent vertices $wa_i$ may be identified with points $i$ and the other half $wa_j^{-1}$ may be identified with lines $\alpha(j)$.  (Recall that $\alpha$ is the bijective map taking points to lines that corresponds to a triangle presentation.)  

    
    Then we note that the star of vertex $wa_i$ contains $v$ vertices of the same type as vertex $w$, one of which is $w$ itself.  Thus edge $a_i$ connects via vertex $wa_i$ to $q^2+q$ (or $v-1$) other edges that culminate in vertices of the same type as $w$.  This means that edge $a_i$ followed by any of the other $q^2+q$ edges, say $a_{j}^{-1}$, traces half of a rhombus with the short diagonal in the link of vertex $w$ and in the star of vertex $wa_i$.  If $i<j$, then include $b_{i,j}=a_{i}a_{j}^{-1}$ as a generator of $\Gamma_{\mathscr{R}(\mathscr{T})}$.  Otherwise, $(a_ia_{j}^{-1})^{-1}= a_{j}a_{i}^{-1}$ whereby we can relabel it $a_{i}a_{j}^{-1}$ so as to also include it as a generator. 
    See Figure \ref{fig:link and star} for an illustration of $q=2$:
\end{proof}

\begin{figure}[h!]
\begin{center}   
\tikzset{
  A/.style={regular polygon, regular polygon sides=3,fill=gray!40,
        fill opacity=0.5, minimum size=3cm, draw},
  B/.style={regular polygon, regular polygon sides=3,fill=gray!40,fill 
        opacity=0.5, minimum size=3cm, draw},
  C/.style={regular polygon, regular polygon sides=3,fill=gray!40,fill 
        opacity=0.5, minimum size=3cm, draw},
  D/.style={regular polygon, regular polygon sides=3,fill=gray!40,fill 
        opacity=0.5, minimum size=3cm, draw},
  E/.style={regular polygon, regular polygon sides=3,fill=gray!40,fill 
        opacity=0.5, minimum size=3cm, draw},
}
\begin{tikzpicture}[node distance=0pt, every node/.style={outer sep=0pt}]
  \node (A) [A=0] {};
  \node (B) [B=0, anchor=corner 2, rotate=180] at (A.corner 1) {};
  \node (C) [C=0, anchor=corner 1, rotate=60] at (A.corner 1) {};
  \node (D) [D=0, anchor=corner 1, rotate=240] at (A.corner 1) {};

   \node[circle, fill=cyan, inner sep=0pt] (a) at (-3.2,-0.7) {};
   \node[circle, fill=blue, inner sep=0pt] (b) at (3,0.9) {};
   \node[circle, fill=blue, inner sep=0pt] (c) at (-2.2,3.7) {};
   \node[circle, fill=blue, inner sep=0pt] (d) at (-4,0.5) {};
   \node[circle, fill=blue, inner sep=0pt] (e) at (-3.3,1.2) {};

    \begin{scope}[every node/.style={sloped,allow upside down}]
    
      \draw[gray, name path=line1] (A.corner 1) -- node {\midarrow}(a);
      \draw[name path=line2] (B.corner 1) -- (B.corner 3);

      \draw[gray, name path=line3] (b) -- node {\midarrow}(A.corner 1);
      \draw[name path=line4] (C.corner 1) -- (C.corner 2);

      \draw[gray, name path=line5] (c) -- node {\midarrow} (A.corner 1);
      \draw[name path=line6] (D.corner 3) -- (D.corner 2);

      \draw[gray, name path=line7] (d) -- node {\midarrow} (A.corner 1);
      \draw[name path=line8] (e) -- (a);
      
      \draw[gray, name path=line9] (e) -- node {\midarrow} (A.corner 1);
      \draw[name path=line10] (B.corner 3) -- (B.corner 1);
      
                
        \node[name intersections={of=line1 and line2, by=intersection_point}] at (intersection_point) {};

        \draw[draw=none, fill = gray!40, fill opacity=0.5] (A.corner 1) -- (intersection_point) 
            -- (a) -- (A.corner 2) -- (A.corner 1);
            
        \node[name intersections={of=line3 and line4, by=intersection_point2}] at (intersection_point2) {};

        \draw[draw=none, fill = gray!40, fill opacity=0.5] (C.corner 1) -- (intersection_point2) 
            -- (b) -- (C.corner 2) -- (C.corner 1);
            
         \node[name intersections={of=line5 and line6, by=intersection_point3}] at (intersection_point3) {};

        \draw[draw=none, fill = gray!40, fill opacity=0.5] (D.corner 3) -- 
        (c) -- (intersection_point3) -- (A.corner 1) -- (D.corner 3);
            
         \node[name intersections={of=line7 and line8, by=intersection_point4}] at (intersection_point4) {};
            
        \draw[draw=none, fill = gray!40, fill opacity=0.5] (A.corner 1) -- (-3.2,-0.7) -- (-4,0.5) -- (intersection_point4) -- (A.corner 1);

         \node[name intersections={of=line9 and line10, by=intersection_point5}] at (intersection_point5) {};
         
        \draw[draw=none, fill = gray!40, fill opacity=0.5] (A.corner 1) -- (-3.2,-0.7) -- (-3.3,1.2) -- (intersection_point5) -- (A.corner 1);

            

         \draw[] (intersection_point) -- (a);
         \draw[] (a) -- node {\midarrow} (A.corner 2);

         \draw[] (intersection_point2) -- (b);
         \draw[] (C.corner 2) -- node {\midarrow} (b);

         \draw[] (intersection_point3) -- (c);
         \draw[] (D.corner 3) -- node {\midarrow} (c);

         \draw[] (d) -- (intersection_point4);
         \draw[] (e) -- (intersection_point5);
         
         \draw[] (a) -- node {\midarrow} (d);
         \draw[] (a) -- node {\midarrow} (e);


    \end{scope}

    \begin{scope}[every node/.style={sloped,allow upside down}]

        \draw [draw=black]
            (A.corner 3) -- node {\midarrow} (A.corner 2) -- node {\midarrow} 
                (A.corner 1) -- node {\midarrow} cycle;
        \draw (B.corner 2) -- node {\midarrow} (B.corner 3) -- node     
                {\midarrow} (B.corner 1);
                
        \draw (C.corner 2) -- node {\midarrow} (C.corner 3) -- node {\midarrow}
            (A.corner 1);
        \draw (D.corner 3) -- node {\midarrow} (D.corner 2) -- node {\midarrow}
            (A.corner 1);
                
    \end{scope}
    
      \draw[shift=(A.side 2)] node[below] {};
      \draw[shift=(A.side 1)] node[below right] {$a_i$};
      \draw[shift=(A.side 3)] node[above right] {};
      \draw[shift=(B.side 2)] node[above] {};
      \draw[shift=(B.side 3)] node[below left] {};
    
      \node[circle, fill=cyan, inner sep=2pt] (a) at (-3.2,-0.7) {};
      \node[circle, fill=red, inner sep=2pt] (b) at (3,0.9) {};
      \node[circle, fill=red, inner sep=2pt] (c) at (-2.2,3.7) {};
       \node[circle, fill=red, inner sep=2pt] (d) at (-4,0.5) {};
      \node[circle, fill=red, inner sep=2pt] (e) at (-3.3,1.2) {};
  
      \node[circle, fill=cyan, inner sep=2pt] at (A.corner 3) {};
      \node[circle, fill=blue, inner sep=2pt, label=above right:{$wa_i$}] at (A.corner 1) {}; 
      \node[circle, fill=red, inner sep=2pt, label=below:{$w$}] 
            at (A.corner 2) {};
      \node[circle, fill=cyan, inner sep=2pt] at (B.corner 3) {};
      \node[circle, fill=red, inner sep=2pt] at (C.corner 3) {};
      \node[circle, fill=red, inner sep=2pt] at (D.corner 2) {};

\end{tikzpicture}
\end{center}
    \caption{}
    \label{fig:link and star}
\end{figure}

The following definition, proposition, and lemma will also be needed before we give a group presentation for $\Gamma_{\mathscr{R}(\mathscr{T})}$. 

\begin{defn}
    For an element $(i,j,k)$ of $\mathscr{T}$, define $(i,j,k)^{-1}$ to be $(k^{-1}, j^{-1}, i^{-1})$, which represents the reversed cycle. 
\end{defn}


\begin{prop} \label{prop:apts are convex}
    (Theorem 3.8, \cite{ronan2009lectures})
    Apartments are convex, meaning that if $A$ is any apartment containing chamber $c$ and simplex $s$, then $A$ contains every minimal gallery from $c$ to $s$. 
\end{prop}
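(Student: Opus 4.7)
My plan is to prove convexity by the classical retraction argument for buildings. First, I would construct a retraction $\rho = \rho_{A,c}\colon \Delta \to A$ with center $c$. Given a chamber $D$ of $\Delta$, apply axiom (2) of Definition \ref{defn: building} to find an apartment $A_D$ containing both $c$ and $D$; then apply axiom (3) to obtain an isomorphism $\phi_D\colon A_D \to A$ fixing $A_D \cap A$ pointwise, and set $\rho(D) := \phi_D(D)$. A standard verification shows this is well-defined (independent of the choice of $A_D$), restricts to the identity on $A$, and enjoys the crucial distance-preservation property $d(c,\rho(D)) = d(c,D)$ for every chamber $D$.

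Granted the retraction, let $c = c_0, c_1, \dots, c_n$ be any minimal gallery with $s \subseteq c_n$. Applying $\rho$ term-by-term yields a sequence in $A$ whose consecutive entries are either equal or share a panel. Since $s \in A$ is fixed pointwise by $\rho$, the endpoint $\rho(c_n)$ contains $s$, and distance preservation forces $d(c,\rho(c_n)) = n$. A pre-gallery in $A$ of length at most $n$ that reaches distance exactly $n$ can have no repetitions, so the image is a genuine minimal gallery of length $n$ lying entirely in $A$.

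The final and most delicate step is to upgrade ``the image lies in $A$'' to ``the original gallery lies in $A$''. I would argue by induction on $i$: once $c_0, \dots, c_{i-1} \in A$, the panel $P_i := c_{i-1} \cap c_i$ is a face of a simplex in $A$ and hence is pointwise fixed by $\rho$. The no-repetition property above forces $\rho(c_i)$ to be the unique chamber of $A$ adjacent to $c_{i-1}$ across $P_i$ other than $c_{i-1}$ itself. Comparing apartments that simultaneously contain $c, c_{i-1}$, and $c_i$ (obtained from repeated application of axiom (2)), one then invokes the rigidity of axiom-(3) isomorphisms, together with the fact that in the thin Coxeter complex $A$ there are only two chambers through $P_i$, to identify $c_i$ with the candidate $\rho(c_i) \in A$. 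This last identification, which hinges on the compatibility between apartment isomorphisms and shared panels in a thick building, is the technical heart of the argument; the earlier steps are essentially formal once the retraction is in hand.
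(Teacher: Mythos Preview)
The paper does not prove this proposition at all: it is quoted verbatim as Theorem~3.8 from Ronan's \emph{Lectures on Buildings} and used as a black box. So there is no ``paper's own proof'' to compare against. Your retraction strategy is exactly the classical one found in Ronan and in Abramenko--Brown, and the first two stages (constructing $\rho_{A,c}$, showing it preserves distance from $c$, and deducing that the image of a minimal gallery is again a minimal gallery in $A$) are correctly sketched.

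There is, however, a genuine gap in your final step. You propose to ``compare apartments that simultaneously contain $c$, $c_{i-1}$, and $c_i$ (obtained from repeated application of axiom~(2))'', but axiom~(2) of Definition~\ref{defn: building} only produces an apartment containing \emph{two} prescribed simplices; obtaining one that contains all three of $c$, $c_{i-1}$, $c_i$ is essentially equivalent to the convexity statement you are proving, so the argument as written is circular. The standard repair is to invoke the \emph{gate property} for panels: for any panel $P$ and chamber $c$, there is a \emph{unique} chamber $\mathrm{proj}_P(c)$ containing $P$ at minimal distance from $c$, and every other chamber on $P$ is one step farther. In your induction (run more cleanly from the end of the gallery, where the last chamber containing $s$ can first be shown to lie in $A$ via the projection to the residue of $s$), both $c_{i-1}$ and the second chamber of $A$ on $P$ sit at distance $i-1$ from $c$, so uniqueness of the gate forces them to coincide, placing $c_{i-1}\in A$. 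With that ingredient supplied, your outline becomes the textbook proof.
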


\begin{lem} \label{lem:min rhombus path}
    Let $A$ be an apartment containing chamber $c$ and vertex $v$ of type $t$. Then there exists a gallery $p$ through minimal rhombus moves from $c$ to $v$ that lies in $A$.  
\end{lem}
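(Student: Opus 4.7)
The plan is to identify the apartment $A$ with the standard Coxeter complex of type $\tilde A_2$, realized as the Euclidean plane tessellated by equilateral triangles whose vertices carry a canonical $3$-coloring by type. Let $v_0$ denote the vertex of the chamber $c$ of type $t$; then both $v_0$ and $v$ lie in the type-$t$ sublattice $\Lambda_t \subset A$, which is a triangular lattice generated by the six ``rhombus vectors'' joining any type-$t$ vertex to its six nearest same-type neighbors. Note that each such rhombus, together with its two constituent triangles, is entirely contained in $A$.

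The first step is to express the lattice vector $v - v_0 \in \Lambda_t$ as an integer combination of a judiciously chosen pair of rhombus vectors (for instance two adjacent generators in the hexagonal arrangement), and realize this combination as a concrete sequence of rhombus moves beginning at $v_0$. Because every rhombus used in the path consists of two triangles lying in $A$, the resulting gallery $p$ is contained in $A$ by construction. Moreover, the number of rhombus moves in $p$ can be taken to equal the $\ell^1$-length of $v-v_0$ expressed in any rhombus basis adapted to the line segment $[v_0,v]$, which is the smallest rhombus-distance that can be achieved within $A$.

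It remains to show that $p$ is minimal \emph{in the full building}, not just within $A$. For this I would invoke Proposition \ref{prop:apts are convex}: each single rhombus move corresponds to traversing the two chambers of a rhombus, so any rhombus gallery from $c$ to $v$ yields a chamber gallery of comparable length from $c$ to $v$. A hypothetical shorter rhombus gallery in the building would therefore give a chamber gallery from $c$ to $v$ whose minimal subgallery, by apartment convexity, already lies in $A$; restricting to $A$ would then produce a rhombus path in $A$ of length strictly less than $d_R(v_0,v)$, contradicting the count from $\Lambda_t$. Alternatively, one can bypass the chamber-gallery bookkeeping by applying axiom (3) of Definition \ref{defn: building}: for any other apartment $A'$ containing both $c$ and $v$, the isomorphism $A' \to A$ fixing $A'\cap A$ preserves the type coloring and hence transports a minimal rhombus gallery in $A'$ to one in $A$ of the same length.

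The main obstacle is making precise the bookkeeping between rhombus-gallery length and chamber-gallery length, so that apartment convexity can be applied to transfer minimality from the building to $A$. I would handle this either by the explicit factor-of-two chamber unfolding described above, or, more cleanly, by the apartment-isomorphism argument, which only uses the standard building axioms together with the explicit Euclidean description of $\Lambda_t$ inside the $\tilde A_2$ Coxeter complex.
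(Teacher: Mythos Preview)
Your lattice construction of a rhombus path inside $A$ is correct, but the paper takes a different and shorter route: it starts with a \emph{minimal chamber gallery} $(c=c_0,c_1,\dots,c_k)$ from $c$ to $v$, which lies in $A$ by Proposition~\ref{prop:apts are convex}, and then simply tracks the type-$t$ vertex through the gallery. Each time the gallery crosses a panel of type $t$, the two chambers form a rhombus and the type-$t$ vertex jumps; otherwise the type-$t$ vertex is unchanged. This immediately yields the desired sequence of rhombus moves in $A$, with no need to describe $\Lambda_t$ explicitly or to choose a basis of rhombus vectors. Your approach buys a concrete Euclidean picture; the paper's buys brevity and works uniformly without coordinates.

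More importantly, you are over-reading the word ``minimal''. In the paper the adjective is inherited from the underlying \emph{chamber} gallery; neither the proof of the lemma nor its sole application (the proof of Theorem~\ref{prop:rhombus pres}) ever uses that the resulting rhombus path is globally shortest in the building. All that is needed later is that \emph{some} rhombus path between the two vertices lies in the given apartment. So the entire second half of your plan, arguing global minimality, is unnecessary. It also has a real gap: a concatenation of rhombus moves is \emph{not} in general a chamber gallery, because consecutive rhombi share only the intermediate type-$t$ vertex, not a panel. Hence your ``factor-of-two unfolding'' does not produce a gallery to which Proposition~\ref{prop:apts are convex} applies, and the proposed contradiction does not go through as stated. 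The building-axiom alternative has the same issue: an arbitrary rhombus path in the building need not lie in a single apartment, so comparing to $A$ via an isomorphism $A'\to A$ is not available. If you wanted global minimality you would need a separate argument, but for the lemma as used you can simply drop that part.
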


\begin{proof}
    Let $p=(c=c_0, c_1, ..., c_k)$ be a minimal gallery from vertex $w$ of type $t$ in $c$ to vertex $v$. If $(c_i,c_{i+1})$ crosses a panel of type $t$, then a rhombus move connects both vertices of type $t$.  If $(c_i, c_{i+1})$ crosses a panel not of type $t$, then $c_i$ and $c_{i+1}$ share a vertex of type $t$.  Thus, the sequence of type $t$ vertices $v_i$ in $c_i$ that starts at $w$ and ends at $v$ can be connected by rhombus moves between $v_i$ and $v_{i+1}$ whenever $v_i\neq v_{i+1}$.  
\end{proof}

In the below theorem, we will allow generators to be duplicated as 
$\{b_{i,j}\}_{\substack{i\neq j\\0\leq i,j \leq v-1}}$ in order to simplify the relations. 

\begin{thm} \label{prop:rhombus pres}
    $\Gamma_{\mathscr{R}(\mathscr{T})}$ has group presentation 
    $\langle b_{i,j} \text{ for } 0\leq i,j\leq v-1 \text{ and } {i\neq j} \, : \, b_{i,j}b_{j,i}=1 \text{ and } b_{i_2,j_2}b_{k_2,\ell_2}b_{m_2n_2}=1 \text{ for any six elements of $\mathscr{T}$ of the form } \\
    (i_1,i_2,i_3), \ 
    (i_3^{-1},j_2^{-1},j_3^{-1}),  \ 
    (j_3,k_2,k_3), \ 
    (k_3^{-1},\ell_2^{-1},\ell_3^{-1}), \ 
    (\ell_3,m_2,m_3), \text{ and } \\
    (m_3^{-1},n_2^{-1},i_1^{-1})
    \rangle$.  
\end{thm}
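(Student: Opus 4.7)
The plan is to exhibit the natural map from the group $G$ defined by the given presentation onto $\Gamma_{\mathscr{R}(\mathscr{T})}$ (sending $b_{i,j} \mapsto a_i a_j^{-1}$) and show it is an isomorphism. Surjectivity is immediate from Lemma~\ref{lem:rhombus generators}, so it suffices to verify that each stated relation holds in $\Gamma_{\mathscr{T}}$ (well-definedness) and that no additional relations are needed (injectivity).

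For well-definedness, $b_{i,j}b_{j,i} = a_i a_j^{-1} a_j a_i^{-1} = 1$ is immediate. For the hexagonal relation, I rewrite each factor using two consecutive triangles from $T_1,\ldots,T_6$. From $T_1 = (i_1, i_2, i_3)$, the relation $a_{i_1}a_{i_2}a_{i_3} = 1$ yields $a_{i_2} = a_{i_1}^{-1} a_{i_3}^{-1}$, and from $T_2$, read as $(j_3, j_2, i_3)\in\mathscr{T}$, the relation $a_{j_3}a_{j_2}a_{i_3} = 1$ yields $a_{j_2}^{-1} = a_{i_3}a_{j_3}$. Combining gives $b_{i_2, j_2} = a_{i_1}^{-1} a_{j_3}$. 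The same procedure applied to $T_3, T_4$ yields $b_{k_2, \ell_2} = a_{j_3}^{-1} a_{\ell_3}$, and applied to $T_5, T_6$ yields $b_{m_2, n_2} = a_{\ell_3}^{-1} a_{i_1}$, so the product telescopes to $1$.

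For injectivity, I would apply the Reidemeister--Schreier method to the presentation $\Gamma_{\mathscr{T}} = \langle a_i \mid a_i a_j a_k = 1 \text{ for } (i,j,k) \in \mathscr{T} \rangle$ with the Schreier transversal $\{1, a_0, a_0^2\}$ for the index-$3$ subgroup $\Gamma_{\mathscr{R}(\mathscr{T})} = \ker\phi$, where $\phi: \Gamma_{\mathscr{T}} \to \Z/3\Z$ sends each $a_i$ to $1$. This produces Schreier generators $x_i = a_i a_0^{-1}$, $y_i = a_0 a_i a_0^{-2}$, $z_i = a_0^2 a_i$ (with $x_0 = y_0 = 1$) and three derived relations $x_iy_jz_k = y_iz_jx_k = z_ix_jy_k = 1$ per triple $(i, j, k) \in \mathscr{T}$. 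The identification $b_{i,j} = x_i x_j^{-1}$ matches the theorem's generators. Using the R--S relations coming from the triples of the form $(0, j, k), (i, 0, k), (i, j, 0) \in \mathscr{T}$, which exist in abundance by condition (1) of Definition~\ref{defn: tripres}, I eliminate $y_i$ and $z_i$ in terms of the $b_{i, 0}$'s. Substituting these expressions back into the remaining R--S relations produces, up to the tautological $b_{i, j} b_{j, i} = 1$, precisely the hexagonal relations: the telescoping identity from well-definedness is run in reverse to witness that every six-triangle configuration $T_1, \ldots, T_6$ corresponds to one such consistency equation.

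The main obstacle is the combinatorial bookkeeping in this Tietze elimination: one must confirm that the six triangles arising from each leftover R--S consistency equation assemble into a hexagonal configuration with exactly the index-sharing pattern specified by the theorem, and conversely that every such configuration in the building produces a valid consistency equation. This reflects the geometric fact that hexagonal disks---that is, $6$-cycles in the link of a vertex, which is the incidence graph of the projective plane and has girth $6$---generate all the relations among rhombus moves in the simply connected $\tilde{A}_2$ building.
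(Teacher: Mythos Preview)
Your well-definedness argument is more explicit than the paper's (which just calls it ``straightforward''), and the telescoping computation is correct.

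The injectivity argument, however, has a real gap, and the paper proceeds quite differently. Your proposed Tietze elimination cannot work as stated: there are only $q+1$ cyclic classes of triples in $\mathscr{T}$ containing $0$ (one for each point of $\alpha(0)$), hence at most $3(q+1)$ Reidemeister--Schreier relations of the special form you single out. But you must eliminate $2v-1=2q^{2}+2q+1$ Schreier generators $y_i,z_i$; already for $q=2$ the count is $9$ relations against $13$ generators. So the phrase ``exist in abundance'' is misleading, and the elimination does not go through with only those relations. Even allowing all R--S relations, the substance of the theorem is precisely that what survives after elimination is the hexagonal family, and you have not shown this---you have relocated the difficulty, not removed it.

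The paper's proof is geometric rather than algebraic. A word $w$ with $\rho(w)=1$ is read as a closed path of rhombus moves in the building. If the loop lies in a single apartment, the vertices of a fixed type form a hexagonal sublattice of the Euclidean plane, and any closed path in a hexagonal tiling bounds a disk made of hexagons and backtracks; this is where the hexagonal relators enter. For a general loop, the paper invokes convexity of apartments (Proposition~\ref{prop:apts are convex}) together with Lemma~\ref{lem:min rhombus path}: choose a maximal sub-arc lying in some apartment $A$, connect its endpoints by a minimal rhombus path $p$ inside $A$, and factor $w$ as a loop in $A$ times a conjugate of a strictly shorter residual loop; iterate. This van~Kampen-style filling uses the simple connectedness of the building directly---exactly the geometric fact you name in your final sentence but do not establish. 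If you want to salvage the algebraic route, you would need to carry the Tietze moves through in full and exhibit, for each surviving relation, the six triangles realizing it; that bookkeeping is the whole proof, not an afterthought.
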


\begin{proof} Observe that these six elements of $\mathscr{T}$ are those whose corresponding elements in $\Gamma_{\mathscr{T}}$ form a hexagon in the Cayley graph of $\Gamma_{\mathscr{T}}$ as seen in Figure \ref{fig:hexagon}.
    \begin{figure}[h!]
        \centering
        \includegraphics[width=0.5\linewidth]{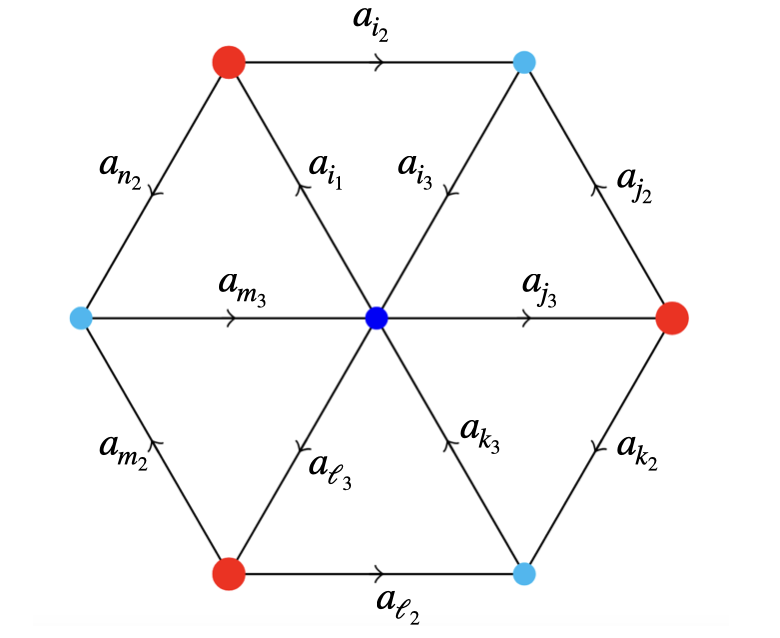}
        \caption{Hexagon in the Cayley graph of $\Gamma_{\mathscr{T}}$}
        \label{fig:hexagon}
    \end{figure}


Let $G$ be the group abstractly presented by $S=\{\beta_{i,j}\}$ and 
$R= \{\text{relations}\}$ where the first relator identifies $(\beta_{i,j})^{-1}$ with $\beta_{j,i}$ and the second relator corresponds to products that bound a hexagon.  Therefore any word that bounds a concatenation of backtracking paths and ``hexagon loops" is group-equivalent to a product of conjugates of relators.  

Define $\rho: G\to \Gamma_{\mathscr{R}(\mathscr{T})}$ by $\beta_{i,j}\mapsto b_{i,j}$.  It is straight forward to check that that $\rho$ is well-defined. 
Let $w=s_1\cdots s_n$ where $s_i\in S$.  Suppose $\rho(w)=1$.  
We need to show that $w=\prod_{m=1}^{\ell} c_mr_mc_m^{-1}$ where $r_m$ is a relator in $G$ and $c_m$ is an element in $G$.


First, we argue that if for all $k\in \{1, ..., n\}$, $\rho(\prod_{m=1}^{k} s_m)$ (i.e., the entire loop) is in the same apartment, say $A$, then we can decompose it into a product of relators. 
%
Observe that the subgraph of the Cayley graph of $\Gamma_{\mathscr{T}}$ that consists of vertices of the same type as the identity and two-edge paths (with the interim vertex of type $a_i$) corresponding to each $s_m$ gives a hexagonal tessellation of a sub-apartment of $A$.  Any closed loop along a hexagonal tessellation bounds a concatenation of hexagons and backtracking paths.

Therefore, it suffices to write $w$ as a product $\prod_{t=1}^r c_tu_tc_t^{-1}$ where $\rho(u_t)=1$ and $u_t$ is a word in $\{\beta_{i,j}\}$ that traces a loop in a single apartment. That is, $u_t=\prod_{m=1}^{k(t)} s_m^t$ where each $s_m^t\in \{\beta_{i,j}\}$.  Define the images of the initial sub-words of $u_t$ as $u_{t,r}:=\rho(\prod_{m=1}^{r} s_m^t)$ where $r\in \{1, ..., k(t)\}$.  We now have that $\{u_{t,r}: 1\leq r\leq k(t)\}$ is in one apartment $A_t$. 


 Let $g_i=\rho(s_i)$.  Then we see that $\rho(w)=\rho(s_1\cdots s_n)=g_1\cdots g_n$ forms a loop in the $\tilde A_2$-building.  Consider the loop in the $\tilde A_2$-building along vertices $g_1=\rho(s_1)$, $g_1g_2=\rho(s_1s_2)$, ..., $g_1\cdots g_n=\rho(s_1\cdots s_n)$ joined by rhombus moves.  
 Note that the minimal path $g_1$ is itself in an apartment. Thus, there exists an $i<j$ such that $g_jg_{j+1}\cdots g_ng_1\cdots g_i$ is contained in an apartment, say $A$, and the length of the complement (i.e., $j-i+1$) is minimized.  If $j=i+1$, then the entire loop is in apartment $A$ and we are done.  

Otherwise, we can connect $g_i$ to $g_j$ by a minimal rhombus path $p$ in $A$ as shown in Figure \ref{fig:rhombus subgroup}.  
\begin{figure}[h!]
\begin{center}   
\begin{tikzpicture}
    
    \draw (0,0) circle (2);

    \begin{scope}[every node/.style={sloped,allow upside down}, label distance=-1mm]

        \draw[->] (-90:2) arc (-90:-100:2) node[below] {$g_j$};
        \draw[->] (200:2) arc (200:190:2) node[left] {$g_n$};
        \draw[->] (180:2) arc (180:169:2) node[left] {$g_1$};
        \draw[->] (110:2) arc (110:100:2) node[above] {$g_i$};
        \draw[blue, ->] (90:2) -- (0:0) node[right] {$p$};
        \draw[blue] (0:0) -- (-90:2);
        \draw[] (-100:2) arc (-100:222:2) node[below,yshift=0.3cm] {$\ddots$};
        \draw[] (160:2) arc (160:135:2) node[above,rotate=85,yshift=-0.1cm] {$\ddots$};
        
        \filldraw (-90:2) circle(2pt);
        \filldraw (-110:2) circle(2pt);
        \filldraw (200:2) circle(2pt);
        \filldraw[red] (180:2) circle(2pt);
        \filldraw (160:2) circle(2pt);
        \filldraw (110:2) circle(2pt);
        \filldraw (90:2) circle(2pt);

    \end{scope}
\end{tikzpicture}
\end{center}
        \caption{}
        \label{fig:rhombus subgroup}
\end{figure}

Now we see that $g_1\cdots g_n = \underbrace{g_1\cdots g_ipg_j\cdots g_n}_{\text{loop in apartment $A$}} \big( \underbrace{g_n^{-1}\cdots g_j^{-1} (\overbrace{p^{-1}g_{i+1}\cdots g_{j-1}}^{\text{element}} )g_j\cdots g_n}_{\text{conjugate of element}}\big)$.  
By \ref{lem:min rhombus path}, we know that such a path exists in every apartment containing $g_i$ and $g_j$.  Consider loop $p^{-1}g_{i+1}\cdots g_{j-1}$.  We can again find $i'<j'$ such that \\$g_{j'}g_{j'+1}\cdots p^{-1}g_{i+1}\cdots g_{i'}$ is contained in an apartment, say $A'$, and the length of the complement is minimized.  Note that $i<i'<j'<j$.  If $j'=i'+1$, then the entire loop is in apartment $A'$ and we are done.  Otherwise can can connect $g_{i'}$ to $g_{j'}$ by a minimal rhombus path $p'$ in $A'$ as shown in Figure \ref{fig:rhombus subgroup2}.  We can repeat this decomposition until the remainder of the original loop is in one apartment.
\end{proof}

\begin{figure}[h!]
\begin{center}   
\begin{tikzpicture}
    \draw (0,0) circle (2);

    \begin{scope}[every node/.style={sloped,allow upside down}, label distance=-1mm]

        \draw[->] (-90:2) arc (-90:-101:2) node[below] {$g_j$};
        \draw[->] (200:2) arc (200:189:2) node[left] {$g_n$};
        \draw[->] (180:2) arc (180:168:2) node[left] {$g_1$};
        \draw[->] (110:2) arc (110:99:2) node[above] {$g_i$};
        \draw[->] (-30:2) arc (-30:-42:2) node[below right] {$g_{j'}$};
        \draw[->] (41:2) arc (41:37:2) node[above right] {$g_{i'}$};
        \draw[blue, ->] (90:2) -- (0:0) node[right] {$p$};
        \draw[blue] (0:0) -- (-90:2);
        \draw[] (-100:2) arc (-100:222:2) node[below,yshift=0.3cm] {$\ddots$};
        \draw[] (160:2) arc (160:135:2) node[above,rotate=85,yshift=-0.1cm] 
                {$\ddots$};
         \draw[] (-50:2) arc (-50:-70:2) 
                node[below,rotate=60,yshift=0.4cm,xshift=-0.1cm] {$\ddots$};
        \draw[] (50:2) arc (50:70:2) 
                node[above, rotate=18, yshift=-0.2cm, xshift=0.1cm] {$\ddots$};
        \draw[cyan,->] (50:2) -- (0:1.3) node[right] {$p'$};
        \draw[cyan] (0:1.3) --(-50:2);
       
        \filldraw (-90:2) circle(2pt);
        \filldraw (-110:2) circle(2pt);
        \filldraw (200:2) circle(2pt);
        \filldraw[red] (180:2) circle(2pt);
        \filldraw (160:2) circle(2pt);
        \filldraw (110:2) circle(2pt);
        \filldraw (90:2) circle(2pt);
        \filldraw (50:2) circle(2pt);
        \filldraw (30:2) circle(2pt);
        \filldraw (-50:2) circle(2pt);
        \filldraw (-30:2) circle(2pt);

    \end{scope}
\end{tikzpicture}
\end{center}
        \caption{}
        \label{fig:rhombus subgroup2}
\end{figure}

Similar to the beginning of section 7 in \cite{witzel2017panel}, we see that if each $a_i$ maps non-trivially to the abelianization of $\Gamma_\mathscr{T}$, then there exists a homomorphism $\psi: \Gamma_\mathscr{T} \to \Z/3\Z$ such that $a_i \mapsto 1$ for $0\leq i\leq v-1$.  Note that because $\psi$ is a homomorphism, $\psi(a_i^{-1})=-\psi(a_i)=-1=2$.  Thus, we see that all $b_{*,*}$ are in the kernel of $\psi$. Also, observe that moving a vertex $v$ by three generators of $\Gamma_\mathscr{T}$ either yields the identity if the three generators are linked to a triangle presentation element, or yields another element, say $v'$ of the same type as vertex $v$.  We can modify the action where three generators takes $v$ to $v'$ by using two elements of $b_{*,*}$. Part $a$ of the following proposition follows immediately:

\begin{prop} 
        Group $\Gamma_{\mathscr{T}}=
            \ker \psi \rtimes \langle a_i \rangle$ for any generator $a_i$ of $\Gamma_{\mathscr{T}}$. 
\end{prop}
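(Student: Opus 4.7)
The plan is to verify the three standard conditions for an internal semidirect product decomposition of $\Gamma_{\mathscr{T}}$ by $\ker\psi$ and the cyclic subgroup $\langle a_i \rangle$: namely that $\ker\psi$ is normal, that $\Gamma_{\mathscr{T}} = \ker\psi \cdot \langle a_i \rangle$, and that $\ker\psi \cap \langle a_i \rangle = \{1\}$. Normality is automatic since $\ker\psi$ is the kernel of a homomorphism. The well-definedness of $\psi$ itself, which is used throughout, comes for free from the discussion just above the proposition: every relator $a_ia_ja_k$ in the presentation of $\Gamma_{\mathscr{T}}$ has length three, so the assignment $a_i \mapsto 1 \in \Z/3\Z$ respects all triangle presentation relations.

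For the generating condition, I would simply use the observation that $\psi(a_i) = 1$ to ``peel off'' the image under $\psi$ by right-multiplying by a power of $a_i$. Explicitly, given $g \in \Gamma_{\mathscr{T}}$, pick the representative $k \in \{0,1,2\}$ of $\psi(g)$ and note that $\psi(g a_i^{-k}) = \psi(g) - k\cdot \psi(a_i) = 0$, so $g a_i^{-k} \in \ker\psi$. The factorization $g = (g a_i^{-k}) \cdot a_i^k$ then expresses $g$ as a product of an element of $\ker\psi$ and an element of $\langle a_i \rangle$.

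The main obstacle is the trivial intersection $\ker\psi \cap \langle a_i \rangle = \{1\}$, which encodes uniqueness of the decomposition and makes the semidirect product genuine rather than just a quotient splitting. Here I would leverage the observation directly preceding the proposition: moving a vertex by three consecutive generators either gives the identity (when those generators correspond to a triangle presentation element) or yields a same-type vertex that can be reached via two rhombus generators $b_{*,*}$, all of which lie in $\ker\psi$. This observation ensures that $\{1, a_i, a_i^2\}$ serves as a bona fide transversal for $\ker\psi$ in $\Gamma_{\mathscr{T}}$, so the coset of $a_i^n$ is trivial in $\Gamma_{\mathscr{T}}/\ker\psi$ precisely when $n \equiv 0 \pmod 3$; combined with the section $\Z/3\Z \to \Gamma_{\mathscr{T}}$ sending $1 \mapsto a_i$, this produces the desired internal semidirect product.

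Finally, to package the conclusion, I would invoke the standard criterion that a short exact sequence $1 \to \ker\psi \to \Gamma_{\mathscr{T}} \to \Z/3\Z \to 1$ which admits a splitting (here the splitting $1 \mapsto a_i$) realizes $\Gamma_{\mathscr{T}}$ as $\ker\psi \rtimes \langle a_i \rangle$, with the action of $\langle a_i \rangle$ on $\ker\psi$ being conjugation. Since the same argument applies to any generator $a_i$ (each of which maps to $1 \in \Z/3\Z$), the decomposition holds for every choice of generator, as stated.
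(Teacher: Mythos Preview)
Your normality and generating-set arguments are fine, but the trivial-intersection step has a genuine gap. You need $\ker\psi\cap\langle a_i\rangle=\{1\}$; since $a_i^{3}\in\ker\psi$ always, this is equivalent to $a_i^{3}=1$ in $\Gamma_{\mathscr{T}}$, which in turn means $(i,i,i)\in\mathscr{T}$. The observation you invoke does not give this: it only says that a product of three generators lies in $\ker\psi$ (either trivially or as a product of two $b_{*,*}$'s), which is exactly the statement $\psi(a_ja_ka_\ell)=0$ and nothing more. Likewise, the map $1\mapsto a_i$ that you call a ``section'' is merely a set-theoretic section of $\psi$; it is a group homomorphism $\Z/3\Z\to\Gamma_{\mathscr{T}}$ precisely when $a_i^{3}=1$, which you have not verified. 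Having $\{1,a_i,a_i^{2}\}$ as a set of coset representatives is strictly weaker than having $\langle a_i\rangle$ as a complement to $\ker\psi$.

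In fact the needed identity fails in general. From the construction in Theorem~\ref{thm:main thm}, a triple $(i,i,i)$ occurs in $\mathscr{T}$ only when $0\in D$, and the table in Section~\ref{sec:examples} shows $0\notin D$ for $q=2,5,8,11$. For such $q$ the element $a_i^{3}$ is a nontrivial element of $\ker\psi$ (indeed $\ker\psi$ acts simply transitively on each vertex type of the building and is therefore torsion-free, so $a_i$ then has infinite order), and $\ker\psi\cap\langle a_i\rangle=\langle a_i^{3}\rangle$ is infinite cyclic. Hence the internal semidirect product cannot hold for an arbitrary generator $a_i$. The paper's own one-line justification glosses over the same point; what the preceding observation genuinely establishes is that $\Gamma_{\mathscr{T}}$ is an extension of $\Z/3\Z$ by $\ker\psi$, generated by $\ker\psi$ together with any single $a_i$, with the honest splitting available only for those $i$ (if any) satisfying $a_i^{3}=1$.
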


We next make the following observation: 

\begin{obs} \label{obs:}
    Let the chamber triple $(\sigma_0, \sigma_1, \sigma_2)$ also be the fundamental domain of the $\tilde A_2$-building that corresponds to the below Cayley graph triangle.  

\begin{center}   
\tikzset{
  A/.style={regular polygon, regular polygon sides=3,fill=gray!20,minimum size=3cm, draw},
  B/.style={regular polygon, regular polygon sides=3,fill=gray!20,minimum size=3cm, draw},
}
\begin{tikzpicture}[node distance=0pt, every node/.style={outer sep=0pt}]
  \node (A) [A=0] {};

    \begin{scope}[every node/.style={sloped,allow upside down}]

        \draw [draw=black]
            (A.corner 1) -- node {\midarrow} (A.corner 2) -- node {\midarrow} 
                (A.corner 3) -- node {\midarrow} cycle;
    \end{scope}
    
      \draw[shift=(A.side 2)] node[below] {$a_i$};
      \draw[shift=(A.side 1)] node[below right] {$a_k$};
      \draw[shift=(A.side 3)] node[above right] {$a_j$};
    
      \node[circle, fill=red, inner sep=2pt, label=east:{$a_i$}] at (A.corner 3) {};
      \node[circle, fill=cyan, inner sep=2pt, label={$a_k$}] at (A.corner 1) {};
      \node[circle, fill=blue, inner sep=2pt, label=west:{$e$}] at (A.corner 2) {};

\end{tikzpicture}
\end{center}

    \noindent Then we can understand $\sigma_0$ as $p$, $\sigma_1$ as $a_ipa_i^{-1}$, and $\sigma_2$ as $a_k^{-1}pa_k$ (where $p$ was previously defined as $p(i)=i+1$). Thus $\Gamma''_{\mathscr{A}}=\langle p, \ a_ipa_i^{-1}, \ a_k^{-1}pa_k \rangle$. 
\end{obs}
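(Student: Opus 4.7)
The plan is to identify each generator $\sigma_i$ of the cyclic vertex stabilizer $S_i$ explicitly, by leveraging the semidirect product structure of $\Gamma_{\mathscr{T}} \rtimes \langle p \rangle$ and the fact that $\Gamma_{\mathscr{T}}$ acts freely on the Cayley graph vertices via left multiplication while $p$ acts by the automorphism $a_j \mapsto a_{j+1}$, which fixes the identity vertex $e$.

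First I would check that $\sigma_0 = p$ generates $S_0$: since $p$ fixes $e$ and $\langle p \rangle$ is cyclic of order $v = q^2+q+1$, which matches the order of the cyclic Singer stabilizer $S_0$ by Theorem~\ref{thm:gamma''}, we conclude $S_0 = \langle p \rangle$. Next, I would invoke the general principle that if $H$ stabilizes a point $v$ under a group action and $h$ carries $v$ to $w$, then $hHh^{-1}$ stabilizes $w$. Left multiplication by $a_i$ sends $e$ to $a_i$, so $a_i \langle p \rangle a_i^{-1} = \langle a_i p a_i^{-1} \rangle$ stabilizes $a_i$ and, being cyclic of order $v$, must equal $S_1$. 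Left multiplication by $a_k^{-1}$ sends $e$ to the third vertex of the fundamental chamber; by the triangle relation $a_i a_j a_k = 1$ corresponding to the chamber, this vertex is $a_k^{-1}$, so $S_2 = \langle a_k^{-1} p a_k \rangle$.

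As a direct verification, I would compute in the semidirect product using the relation $p a_j = a_{j+1} p$. This gives $a_i p a_i^{-1} = (a_i a_{i+1}^{-1})\, p$, which acts on vertex $a_i$ by $a_i a_{i+1}^{-1} \cdot p(a_i) = a_i a_{i+1}^{-1} a_{i+1} = a_i$, confirming the stabilizer claim; an analogous calculation shows that $a_k^{-1} p a_k = (a_k^{-1} a_{k+1})\, p$ stabilizes $a_k^{-1}$. Combining with the definition $\Gamma''_{\mathscr{A}} = \langle S_0, S_1, S_2 \rangle$ yields the asserted presentation. The main delicate point will be identifying the third chamber vertex as $a_k^{-1}$ (rather than $a_k$), which requires reading the figure's Cayley graph orientation against the triangle relation; once this is pinned down, the remaining computations are routine.
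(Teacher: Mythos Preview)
The paper records this as an \emph{observation} and does not supply a formal proof; your proposal correctly provides the justification that the paper leaves implicit. Your argument---identifying $\langle p\rangle$ as the full stabilizer $S_0$ of $e$ by matching orders against Theorem~\ref{thm:gamma''}, then transporting it via conjugation to the other two chamber vertices, with a direct check using the semidirect-product relation $pa_j=a_{j+1}p$---is exactly what is needed and is the natural reading of the observation. Your caution about reading the third vertex as $a_k^{-1}$ (not $a_k$) from the Cayley-graph orientation and the relation $a_ia_ja_k=1$ is well placed; the figure's vertex label ``$a_k$'' is a name rather than the group element, and your identification is the one consistent with the stated conclusion $\sigma_2=a_k^{-1}pa_k$.
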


We can also see that both maps $\phi$ and $\psi$ extend to domain $\Gamma_\mathscr{T} \rtimes \langle p \rangle$.

\begin{prop}
    There exists a map $\Phi: \Gamma_\mathscr{T} \rtimes \langle p \rangle \to \Z/v\Z$ that is an extension of $\phi: \Gamma''_{\mathscr{A}} \to \Z/v\Z$.  There also exists a map $\Psi: \Gamma_\mathscr{T}\rtimes \langle p \rangle  \to \Z/3Z$ that is an extension of $\psi: \Gamma_{\mathscr{T}} \to \Z/3\Z$. 
\end{prop}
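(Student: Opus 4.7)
The plan is to exploit the fact that both targets $\Z/v\Z$ and $\Z/3\Z$ are abelian, so that any homomorphism out of $\Gamma_{\mathscr{T}} \rtimes \langle p\rangle$ factors through its abelianization. Using the semidirect product presentation $\langle a_0,\dots,a_{v-1},p \,:\, a_ia_ja_k=1\ \text{for}\ (i,j,k)\in\mathscr{T},\ p^v=1,\ pa_ip^{-1}=a_{i+1}\rangle$, abelianness of the target together with the conjugation relation immediately forces each generator $a_i$ to be sent to a common value. My strategy is therefore to define each candidate map on the finite generating set $\{a_0,\dots,a_{v-1},p\}$ and then verify the three families of relations.

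For $\Phi$, I would set $\Phi(a_i):=0$ for all $i$ and $\Phi(p):=1$. The triangle relation becomes $0+0+0=0$, the cyclic relation becomes $v\cdot 1\equiv 0\pmod v$, and the conjugation relation reads $0=0$, so $\Phi$ is well-defined. To see it extends $\phi$, I would recall from the Observation immediately preceding the proposition that the Singer-lattice generators are $\sigma_0=p$, $\sigma_1=a_ipa_i^{-1}$, and $\sigma_2=a_k^{-1}pa_k$; hence $\Phi(\sigma_j)=1=\phi(\sigma_j)$ for $j=0,1,2$, and since $\phi$ and $\Phi|_{\Gamma''_{\mathscr{A}}}$ agree on a generating set of $\Gamma''_{\mathscr{A}}$, they agree everywhere.

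For $\Psi$, I would set $\Psi(a_i):=1$ for all $i$ and $\Psi(p):=0$. The triangle relations then give $1+1+1\equiv 0\pmod 3$, the cyclic relation gives $v\cdot 0=0$, and the conjugation relation reads $1=1$, so $\Psi$ is well-defined. Restricting to $\Gamma_{\mathscr{T}}$ sends each $a_i$ to $1$, which matches $\psi$ on generators and hence on all of $\Gamma_{\mathscr{T}}$.

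The main (and essentially only) subtlety is recognising at the outset that the target's abelianness together with the conjugation relation $pa_ip^{-1}=a_{i+1}$ forces a uniform value on the $a_i$'s; once this is observed, the remainder collapses to three short numerical checks. No genuine obstacle appears, because the two relevant sums $v\cdot 1$ and $3\cdot 1$ already vanish modulo $v$ and modulo $3$ respectively, so each defining relation is automatically satisfied.
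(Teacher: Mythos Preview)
Your proposal is correct and follows essentially the same approach as the paper: define $\Phi(a_i)=0$, $\Phi(p)=1$ and $\Psi(a_i)=1$, $\Psi(p)=0$, verify the three families of relations, and then check agreement with $\phi$ and $\psi$ on generators. Your write-up is in fact slightly more careful than the paper's in that you explicitly invoke the Observation to compute $\Phi(\sigma_j)=1$ and thereby confirm $\Phi|_{\Gamma''_{\mathscr{A}}}=\phi$, whereas the paper simply asserts this restriction.
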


\begin{proof}
    
    Define $\Phi: \Gamma_\mathscr{T} \rtimes \langle p \rangle \to \Z/v\Z$ on generators as $p\mapsto 1$ and $a_i\mapsto 0$ for all $0\leq i\leq v-1$.  Map $\Phi$ is well defined:  Say $(i,j,k)\in \mathscr{T}$, then $a_ia_ja_k=1$ and  $\Phi(a_ia_ja_k)=\Phi(a_i)+\Phi(a_j)+\Phi(a_k)=0+0+0=0$.  Recall that $pa_ip^{-1}=a_{i+1}$, then $\Phi(pa_ip^{-1}a_{i+1}^{-1})=\Phi(p)+\Phi(a_i)-\Phi(p)-\Phi(a_{i+1})=1+0-1-0=0$.  Lastly, $\Phi(p^v)=v\Phi(p)=0$ (mod $v$).  
    We also see that $\eval{\Phi}_{\Gamma''_{\mathscr{A}}} =\phi$. Therefore $\Phi$ is an extension of $\phi$.

    Next define $\Psi: \Gamma_\mathscr{T}\rtimes \langle p \rangle  \to \Z/3Z$ on generators as $p\mapsto 0$ and $a_i\mapsto 1$ for all $0\leq i\leq v-1$.  
    Map $\Psi$ is also well defined.
    Say $(i,j,k)\in \mathscr{T}$, then $a_ia_ja_k=1$ and  $\Psi(a_ia_ja_k)=\Psi(a_i)+\Psi(a_j)+\Psi(a_k)=1+1+1=0$ (mod 3).   
    Also, $\Psi(pa_ip^{-1}a_{i+1}^{-1})=\Psi(p)+\Psi(a_i)-\Psi(p)-\Psi(a_{i+1})=0+1-0-1=0$.  Lastly, $\Psi(p^v)=v\Psi(p)=v\cdot0=0$. Moreover, we see that $\eval{\Psi}_{\Gamma''_{\mathscr{A}}} =\psi$.  Therefore, $\Psi$ is an extension of $\psi$.  
\end{proof}

We use the above proposition to prove the following:
\begin{thm}
    $\ker \phi=\ker \psi=\Gamma_{\mathscr{R}(\mathscr{T})}$.  
\end{thm}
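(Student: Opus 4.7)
The plan is to establish the three-way equality by first showing $\Gamma_{\mathscr{R}(\mathscr{T})} \subseteq \ker\psi$ and $\ker\phi \subseteq \Gamma_{\mathscr{R}(\mathscr{T})}$, and then closing the loop with an index count. The first containment is immediate: each rhombus generator satisfies $\psi(b_{i,j}) = \psi(a_i) - \psi(a_j) = 1 - 1 = 0$.

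For $\ker\phi \subseteq \Gamma_{\mathscr{R}(\mathscr{T})}$, I would leverage the extension $\Phi$ from the previous proposition. Any element of $\Gamma_{\mathscr{T}} \rtimes \langle p \rangle$ has the form $gp^k$ with $g \in \Gamma_{\mathscr{T}}$, and $\Phi(gp^k) = k \pmod v$, so $\ker\Phi = \Gamma_{\mathscr{T}}$. Since $\phi$ is the restriction of $\Phi$ to $\Gamma''_{\mathscr{A}}$, we conclude $\ker\phi = \Gamma''_{\mathscr{A}} \cap \ker\Phi = \Gamma''_{\mathscr{A}} \cap \Gamma_{\mathscr{T}} \subseteq \Gamma_{\mathscr{T}}$. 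The observation from Witzel recalled just before the statement of the theorem tells us that $\ker\phi$ acts simply transitively on each vertex type and hence is type-preserving; Lemma \ref{lem:rhombus generators} identifies the type-preserving subgroup of $\Gamma_{\mathscr{T}}$ with $\Gamma_{\mathscr{R}(\mathscr{T})}$, so $\ker\phi \subseteq \Gamma_{\mathscr{R}(\mathscr{T})}$ follows.

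At this stage we have a nested chain $\ker\phi \subseteq \Gamma_{\mathscr{R}(\mathscr{T})} \subseteq \ker\psi$ inside $\Gamma_{\mathscr{T}}$. Both outer subgroups have index $3$ in $\Gamma_{\mathscr{T}}$: the map $\psi$ surjects onto $\Z/3\Z$, giving $[\Gamma_{\mathscr{T}} : \ker\psi] = 3$, while $\ker\phi$ acts simply transitively on the type-$0$ vertex set and $\Gamma_{\mathscr{T}}$ acts simply transitively on all vertices, so $[\Gamma_{\mathscr{T}} : \ker\phi]$ equals the size ratio of these vertex sets, which is $3$. Since both outer indices are equal to $3$, the containments are forced to be equalities throughout.

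The main obstacle I anticipate is the very first move $\ker\phi \subseteq \Gamma_{\mathscr{T}}$: a priori $\ker\phi$ lives inside the Singer lattice $\Gamma''_{\mathscr{A}}$ rather than inside $\Gamma_{\mathscr{T}}$, and the identification becomes visible only after the extension $\Phi$ is in hand and one verifies the clean description $\ker\Phi = \Gamma_{\mathscr{T}}$. Once that identification is in place, the remainder of the proof is essentially an indexed diagram chase combining Lemma \ref{lem:rhombus generators} with Witzel's simple-transitivity statement.
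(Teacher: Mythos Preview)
Your argument is correct and shares the paper's core ingredients: the identification $\ker\Phi=\Gamma_{\mathscr{T}}$, the fact that $\phi=\Phi|_{\Gamma''_{\mathscr{A}}}$, and Lemma~\ref{lem:rhombus generators}. The difference is in how the loop is closed. The paper observes, symmetrically, that $\ker\Psi=\Gamma''_{\mathscr{A}}$ (since $\Psi$ records type rotation and $\Gamma''_{\mathscr{A}}$ is exactly the type-preserving part of $\Gamma_{\mathscr{T}}\rtimes\langle p\rangle$); hence
\[
\ker\phi=\Gamma''_{\mathscr{A}}\cap\ker\Phi=\ker\Psi\cap\ker\Phi
\quad\text{and}\quad
\ker\psi=\Gamma_{\mathscr{T}}\cap\ker\Psi=\ker\Phi\cap\ker\Psi,
\]
so $\ker\phi=\ker\psi$ drops out with no counting. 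You instead sandwich $\ker\phi\subseteq\Gamma_{\mathscr{R}(\mathscr{T})}\subseteq\ker\psi$ and squeeze using an index-$3$ argument. That works, but it is slightly roundabout: once you have invoked Lemma~\ref{lem:rhombus generators} to say that $\Gamma_{\mathscr{R}(\mathscr{T})}$ \emph{is} the type-preserving subgroup of $\Gamma_{\mathscr{T}}$, and you know $\psi$ records type rotation, you already have $\ker\psi=\Gamma_{\mathscr{R}(\mathscr{T})}$ on the nose, which makes the index count redundant. Either way the proof goes through; the paper's route just exploits the $\Phi/\Psi$ symmetry a bit more fully.
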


\begin{proof}
    From Observation \ref{obs:}, we have that $\sigma_0=p$, $\sigma_1=a_ipa_i^{-1}$, and $\sigma_2=a_k^{-1}pa_k$.  Thus,  $\phi(p)=\phi(a_ipa_i^{-1})=\phi(a_k^{-1}pa_k)=1$ and $\ker \Phi=\Gamma_{\mathscr{T}}$.  
    Also, $ker \Psi$ consists of the type-preserving elements of $\Gamma_{\mathscr{T}}\rtimes \langle p \rangle$, which makes $\ker \Psi = \Gamma''_{\mathscr{A}}$.  Moreover, $\phi = \eval{\Psi}_{\Gamma''_{\mathscr{A}}}$ because $\phi=\Phi$ on the generators of $\Gamma''_{\mathscr{A}}$. 
    This makes $\ker \phi = \ker \Phi \cap \Gamma''_{\mathscr{A}} = \ker \Phi \cap \ker \Psi$.  
    Now by definition $\psi=\eval{\Psi}_{\Gamma_\mathscr{T}}$.  This makes $\ker \psi = \Gamma_{\mathscr{T}} \cap \ker \Psi = \ker \Phi \cap \ker \Psi$.  
    Therefore, $\ker \phi = \ker \psi$.  By Lemma \ref{lem:rhombus generators}, we see that $\ker \psi=\Gamma_{\mathscr{R}(\mathscr{T})}$.  
\end{proof}

We can now flesh out Figure \ref{fig:poset} as shown:

{
\begin{figure}[h!]
\begin{center}    
    \begin{tikzcd}[column sep=0em,row sep=2em]
        & \substack{\tilde \Gamma_{\mathscr{T}} = \\
                (\Gamma_{\mathscr{T}} \rtimes \langle p \rangle) \rtimes \langle s\rangle = \\ (\Gamma''_{\mathscr{A}}\rtimes \langle a_i \rangle)\rtimes \langle s \rangle}
           \\
        & \substack{\Gamma_{\mathscr{T}} \rtimes \langle p \rangle= \\ \Gamma''_{\mathscr{A}}\rtimes \langle a_i \rangle}
                \arrow[u, swap, "\text{extend by $\langle s \rangle$}"]\\
          \substack{\Gamma_{\mathscr{T}}= \\ \ker \Phi = \\ \ker \psi\rtimes \langle a_i \rangle}
                \arrow[ur, "\text{extend by $\langle p \rangle$}"] 
                 && 
          \substack{\text{Singer lattice } \Gamma''_{\mathscr{A}} = \\ \ker \Psi = \\ \ker \phi \rtimes \langle p \rangle}
                \arrow[ul, swap, "\text{extend by $\langle a_i \rangle$}"] \\
        & \substack{\ker \phi = \\ \ker \psi = \\ \Gamma_{\mathscr{R}(\mathscr{T})}}
                \arrow[ul,"\text{extend by $\langle a_i \rangle$}"] 
                \arrow[ur, swap, "\text{extend by $\langle p \rangle$}"] 
        
    \end{tikzcd}    
\end{center}
    \caption{Fleshed out Poset of Lattices}
    \label{fig:poset}
\end{figure}
}

\newpage
\hfill\\

\section{Examples} \label{sec:examples}

\par The below examples demonstrate how triangle presentations of order $q$ are encoded by perfect difference fixed by multiplier $q$. We have color-coded the orbits of the \PDS s and used the same coloring with the corresponding triangle presentations.  

In the below table, ``PP" stands for ``Projective Plane."  Recall that the order of the projective plane is $q$ and the number of elements in the \PDS\ is 
$q+1$. \\ \\

\begin{longtable}{CLL}
\text{PP order} & \text{Perfect Difference Set} & 
       \text{Triangle Presentation} \\
\hline
\\
2 & \{{\color{Orange}1},{\color{Orange}2},{\color{Orange}4}\} 
        & \big\{ \, \langle {\color{Orange}1}, {\color{Orange}2},
            {\color{Orange}4} \rangle \, \big\} \\ \\
3 & \{0,{\color{Orange}1},{\color{Orange}3},{\color{Orange}9}\}
        & \big\{ \, \langle {\color{Orange}1},  {\color{Orange}3},
            {\color{Orange}9} \rangle , \  
            \langle 0, 0, 0 \rangle \, \big\} \\ \\
4 & \{0,{\color{Orange}1},{\color{Orange}4},{\color{blue}14},{\color{Orange}16}\}
        & \big\{ \, \langle {\color{Orange}1},  {\color{Orange}4},  
            {\color{Orange}16} \rangle , \ 
            \langle {\color{blue}14},  {\color{blue}14},{\color{blue}14} \rangle, \\
        & & \ \ \langle 0, 0, 0 \rangle \, \big\} \\ \\
5 & \big\{ {\color{Orange}1}, {\color{Orange}5}, {\color{blue}17},
            {\color{blue}22}, {\color{blue}23}, {\color{Orange}25}  \big\}
        & \big\{ \, \langle {\color{Orange}1},  {\color{Orange}5},  
            {\color{Orange}25} \rangle , \ 
            \langle {\color{blue}17},  {\color{blue}23},{\color{blue}22} \rangle \, \big\} \\ \\
7 & \big\{ 0, {\color{Orange}1}, {\color{blue}5}, {\color{Orange}7}, 
            {\color{blue}17}, {\color{blue}35}, {\color{ForestGreen}38}, {\color{Orange}49} \big\}
        & \big\{ \, \langle {\color{Orange}1}, {\color{Orange}7}, 
            {\color{Orange}49} \rangle , \
            \langle {\color{blue}5}, {\color{blue}35}, {\color{blue}17} \rangle, 
            \\
        & & \ \ \langle {\color{ForestGreen}38}, {\color{ForestGreen}38},   
            {\color{ForestGreen}38} \rangle ,  \
            \langle 0, 0, 0 \rangle \, \big\} \\ \\
8 & \big\{ {\color{Orange}1}, {\color{blue}2}, {\color{ForestGreen}4}, 
            {\color{Orange}8}, {\color{blue}16}, {\color{ForestGreen}32}, {\color{ForestGreen}37}, {\color{blue}55}, {\color{Orange}64} \big\}
        & \big\{ \, \langle {\color{Orange}1}, {\color{Orange}8}, 
            {\color{Orange}64} \rangle , \
            \langle {\color{blue}2}, {\color{blue}16}, {\color{blue}55} \rangle, 
            \\
        & & \ \ \langle {\color{ForestGreen}4}, {\color{ForestGreen}32},       
            {\color{ForestGreen}37} \rangle \, \big\} \\ \\
9 & \big\{ 0, {\color{Orange}1}, {\color{blue}3}, {\color{Orange}9}, 
            {\color{blue}27}, {\color{ForestGreen}49}, {\color{ForestGreen}56}, {\color{blue}61}, {\color{ForestGreen}77}, {\color{Orange}81}  \big\}
        & \big\{ \, \langle {\color{Orange}1}, {\color{Orange}9},
            {\color{Orange}81} \rangle, \
            \langle {\color{blue}3}, {\color{blue}27}, {\color{blue}61} \rangle, 
            \\
        & & \ \ \langle {\color{ForestGreen}49}, {\color{ForestGreen}77},
            {\color{ForestGreen}56} \rangle, \
            \langle 0, 0, 0 \rangle \, \big\} \\ \\
11 & \big\{ {\color{Orange}1}, {\color{blue}10}, {\color{Orange}11},
            {\color{blue}13}, \ {\color{ForestGreen}27}, {\color{ForestGreen}31}, {\color{DarkOrchid}68}, \ {\color{ForestGreen}75}, {\color{DarkOrchid}83}, 
        & \big\{ \, \langle {\color{Orange}1}, {\color{Orange}11}, 
            {\color{Orange}121} \rangle, \ 
            \langle {\color{blue}10}, {\color{blue}110}, {\color{blue}13} \rangle, 
            \\
        & \ \ {\color{blue}110}, {\color{DarkOrchid}115}, {\color{Orange}121}  
            \big\} 
        & \ \ \langle {\color{ForestGreen}27}, {\color{ForestGreen}31},     
            {\color{ForestGreen}75} \rangle ,  \
            \langle {\color{DarkOrchid}68}, {\color{DarkOrchid}83}, {\color{DarkOrchid}115} \rangle \, \big\} \\ \\
13 & \big\{ 0, {\color{Orange}1}, {\color{Orange}13}, {\color{Cerulean}61}, 
            {\color{DarkOrchid}67}, {\color{ForestGreen}69}, {\color{blue}107}, {\color{blue}110}, {\color{ForestGreen}132},  
        & \big\{ \, \langle {\color{Orange}1}, {\color{Orange}13}, 
            {\color{Orange}169} \rangle, \
            \langle {\color{blue}107}, {\color{blue}110}, {\color{blue}149} \rangle,  \\
        & \ \ {\color{DarkOrchid}139}, {\color{blue}149}, \ 
            {\color{DarkOrchid}160}, \ {\color{ForestGreen}165}, \ {\color{Orange}169}  \big\}
        & \ \ \langle {\color{ForestGreen}69}, {\color{ForestGreen}132},    
            {\color{ForestGreen}165} \rangle , \
            \langle {\color{DarkOrchid}67}, {\color{DarkOrchid}139}, {\color{DarkOrchid}160} \rangle, \\
        & & \ \ \langle {\color{Cerulean}61}, {\color{Cerulean}61}, 
            {\color{Cerulean}61} \rangle,  \ 
            \langle 0, 0, 0 \rangle \, \big\} \\ \\

\end{longtable}


\bibliographystyle{alpha}
\bibliography{refs}

\end{document}